\def\ps@pprintTitle{%
	\let\@oddhead\@empty
	\let\@evenhead\@empty
	\def\@oddfoot{}%
	\let\@evenfoot\@oddfoot}
\definecolor{lightblue}{rgb}{0.32,0.45,0.90}
\definecolor{lightgreen}{rgb}{0.42,0.7,0.40}
\numberwithin{equation}{section}
\numberwithin{figure}{section}
\numberwithin{figure}{section}
\renewcommand{\div}{\operatorname*{div}}
\def\b{\boldsymbol}
\newcommand{\vast}{\bBigg@{4}}
\newcommand{\Vast}{\bBigg@{5}}
\def\b{\boldsymbol}
\colorlet{cgray}{gray!20!white}
\theoremstyle{definition}
\newtheorem{definition}{Definition}[section]
\newtheorem{method}{Method}[section]
\newtheorem{assumption}{Assumption}[section]
\newtheorem{theorem}{Theorem}[section]
\theoremstyle{remark}
\newtheorem*{remark}{Remark}
\newtheorem{example}{Example}[section]
\newtheorem{lemma}{Lemma}[section]
\tikzset{every label/.style={font=\footnotesize,inner sep=1pt}}
\pgfplotsset{compat=1.18} 
\begin{document}
\begin{frontmatter}
\title{Strong Stability Preservation for Stochastic Partial Differential Equations}
\author[inst4]{J. Woodfield}

\affiliation[inst4]{organization={Department of Mathematics, Imperial College London},
addressline={South Kensington Campus}, 
city={London},
postcode={SW7 2AZ}, 
country={United Kingdom}}

\begin{abstract}
This paper extends deterministic notions of Strong Stability Preservation (SSP) to the stochastic setting, enabling nonlinearly stable numerical solutions to stochastic differential equations (SDEs) and stochastic partial differential equations (SPDEs) with pathwise solutions that remain unconditionally bounded. This approach may offer modelling advantages in data assimilation, particularly when the signal or data is a realization of an SPDE or PDE with a monotonicity property. 
\end{abstract}
\begin{keyword}
SSP, Stochastic, Bounded, Positive, Monotone, Contractive
\end{keyword}
\end{frontmatter}
%\tableofcontents

% \begin{abstract}
% In this paper, deterministic notions of Strong Stability Preservation are translated into the Stochastic setting, allowing stable solutions to SPDEs whose pathwise solutions are unconditionally bounded.

% This will be shown useful in the context of numerical methods employing Riemann solvers. This will shown to be potentially advantageous in the context of data assimilation in the instance when the data arises from a physical process with monotone properties.
% %signal is a realisation of an SPDE or PDE with a monotone property. 
% %when using a particle filter whose forward model can be described by an SPDE with a monotone property. 
% %When the data arises from a physical process with monotone properties (such as stochastic coarse grain/model reduction by transport noise) or a realisation of the stochastic forward model with nonlinearity preservation. 
% \end{abstract}

%% main text
\tableofcontents

\section{Introduction}

\subsection{Motivation}
Many important partial differential equations (PDEs) have solutions with monotonic and contractivity properties, typically defined with respect to some convex functional, norm or semi-norm.  For example, weak entropy solutions of the one-dimensional Burgers equation are contractive with respect to the total variation semi-norm, $|| u(t^{n+1})||_{BV(\mathbb{T}^1)}\leq || u(t^{n})||_{BV(\mathbb{T}^1)}$.
Solutions to 2D Euler's equation have bounded vorticity, $||\omega(t^{n+1})||_{L^{\infty}(\mathbb{T}^2)} \leq ||\omega(t^{n})||_{L^{\infty}(\mathbb{T}^2)}$. The preservation of positivity is a crucially important monotonic property of the advection equation. Furthermore, many different types of monotonicity exist and are of theoretical and practical interest, including local maximum principles, entropy conditions, and contractive behaviour between two solutions $||q_1(t^{n+1})-q_2(t^{n+1})||\leq ||q_1(t^n)-q_2(t^n)||$. Deterministic Strong Stability Preserving
(SSP) methodology was developed to numerically preserve discrete versions of these nonlinear properties.

The solutions to stochastic partial differential equations SPDEs can also have nonlinear properties that are desirable to preserve at a discrete level. SPDEs are an increasingly popular tool used to model physical phenomena that require additional parametrisation. In this paper we investigate strong stability preserving Runge-Kutta methods for stochastic equations, with a focus on constructing unconditionally contractive (monotone) numerical schemes for solving SPDEs. To the best of our knowledge, the stochastic extension/application of SSP time-stepping has not yet been explored. The theory presented here consists of three key components: a spatial scheme that preserves the desired monotone property, a deterministic SSP time-stepping method adapted for the stochastic setting, and appropriately bounded driving increments.
\subsection{Literature}
% Runge-Kutta methods remain an important solving strategy for the numerical solution of ODEs due to their computational effectiveness, in addition to geometric, algebraic and group characterisations \cite{butcher1987numerical,butcher1963coefficients,butcher1972algebraic,butcher1996runge,hairer2006geometric}. Runge-Kutta methods have a history of different notions of stability,  \cite{butcher1996history} highlights some historical development of A-stability, L-stability, B-stability. More recently, the notion of Strong Stability Preservation and contractivity has received much attention due to the ability to preserve nonlinear properties. 

Strong stability preserving Runge Kutta (SSPRK) methods are Runge-Kutta methods capable of being written as a convex combination of forward Euler schemes, allowing nonlinear stability properties to be inherited from the forward Euler flow map. The optimal convex combination is referred to as the Shu-Osher \cite{shu1988efficient} representation. It is characterised by a stepsize condition referred to as either the radius of monotonicity or the SSP coefficient. The radius of monotonicity is the largest timestep for which Kraaijevanger's conditions hold \cite{kraaijevanger1991contractivity} and characterises the effectiveness of an RK-scheme to retain monotonic properties. 

For a general review of deterministic SSP methods, we refer to the review paper \cite{gottlieb2005high} and the references therein. For a selective review of some of the important theoretical developments in the deterministic SSP literature, we refer to 
\cite{ferracina2005stepsize,ferracina2004stepsize,higueras2005representations,higueras2004strong,ketcheson2004algebraic,kraaijevanger1991contractivity,shu1988efficient}, where amongst other important developments, the notion of contractivity and the SSP property are unified under different convex functionals. The extension to Additive Runge-Kutta schemes is made in \cite{higueras2006strong} following \cite{kraaijevanger1991contractivity}, and then similarly extended to include Generalised Additive Runge-Kutta schemes in \cite{sandu2015generalized}. Implicit systems required an extension of the original Shu-Osher representation and is discussed in \cite{bolley1978conservation,spijker1983contractivity,ferracina2004stepsize,higueras2005representations,higueras2004strong}. To circumnavigate specific order-barriers one can use perturbed RK methods viewed as Additive Runge-Kutta methods in \cite{higueras2018optimal,kraaijevanger1991contractivity}. For large systems, low storage high effective radius of monotonicity methods have been proven effective see \cite{williamson1980low,ketcheson2008highly,higueras2019new}. Internal and external monotonicity is discussed in \cite{hundsdorfer2011boundedness}. For an adaptation of the SSP theory to preserve nonlinear properties in the setting of linear operators see \cite{conde2017implicit}.

 For Stochastic Runge-Kutta methods, convergence to Itô and Stratonovich systems is well established \cite{rossler2010runge,burrage2004numerical,Kloeden_Platen_1992,milstein2013numerical} and concepts such as A-Stability have been well developed for both Itô and Stratonovich cases \cite{Kloeden_Platen_1992}.  Whilst SSP theory has not been discussed in the stochastic setting. The related notion of positivity preservation has been studied in the context of SDEs, and we highlight some recent contributions. In \cite{scalone2022positivity} positivity preserving theta methods are proposed, using bounded increments is not discussed. In \cite{davila2005numerical} schemes are constructed using the exact solutions of a linearised form of the SDE on short time windows, as to construct positivity-preserving numerical schemes. In \cite{kelly2018adaptive} a time-adaptive approach is taken, and positivity preservation is discussed in a probabilistic sense. In \cite{kiouvrekis2025domain} positivity preservation of a specific SDE is attained through the use of an exponential type integrator. In \cite{higham2012convergence} non-negativity of a Milstein-type scheme is shown in a probabilistic sense. In \cite{beyn2017stochastic} Stochastic C-stability and B-consistency are established for Milstein-type schemes, what is referred to as satisfying a global monotonicity condition refers to the L2 stability bounds on the drift and diffusion, and not the notion of monotonicity discussed in this paper. More recently in 2023 Fang Zhao and Zhao \cite{fang2023strong} investigated the SSP property for forward-backwards SDEs for multistep schemes, but defined a notion of strong stability preservation in expectation. Closer to the results and aims as in this paper, \cite{lei2023strong} show that that a logarithmic transformed truncated Euler–Maruyama method can attain positive solutions of SDEs with strong order 1/2 convergence. This paper considers strong stability preservation pathwise, and considers schemes with weak convergence order 1 and mean square (strong) order convergence order 1/2.

\subsection{Background: Introduction to deterministic SSP integration.}

 PDEs are often discretised as a system of ordinary differential equations (ODEs), this is common after discretisation in space using the method of lines, where the resulting ODEs take the form 
\begin{align}
    \frac{d \b q}{dt} = \b f(\b q), \quad \b q(0) = \b q_0, \quad \b q \in \mathbb{R}^n. \label{eq: ode system}
\end{align}
Strong Stability Preserving (SSP) integrators emerged out of a desire to ensure that numerical methods discretising the system of ODEs satisfy a discrete contractive property analogue to that of the continuum model. One approach commonly taken in the Runge-Kutta framework is to assume that the forward Euler flow map can be proven monotone under an arbitrary convex functional, norm or semi-norm $||\cdot ||$ for all timesteps less than a critical value $\tau_{0}$, i.e. it is assumed 
\begin{align}
||\b q^{n+1}|| = ||\operatorname{FE}(\b q^n,\Delta t, \b f) || = ||\b q^n+ \Delta t \b f(\b q^n) || \leq ||\b q^{n}||, \quad \forall \Delta t \leq \tau_{0}.
\end{align}

It is then common to write a Runge-Kutta method, as a convex combination of forward Euler steps. For example, the Heun scheme admits the following representation
\begin{align}
\text{Heun}(\b q^n,\Delta t, \b f) = 1/2 \b q^n + 1/2 \operatorname{FE}\left(\operatorname{FE}(\b q^n,\Delta t, \b f),\Delta t, \b f \right).
\end{align}

Since it is assumed proven for forward Euler, one can show that using the convex combination representation, the same monotone property holds for the HEUN scheme as follows
\begin{align}
||\b q^{n+1}|| = ||\text{Heun}(\b q^n,\Delta t, \b f)|| \leq 1/2 ||\b q^n || + 1/2 || \operatorname{FE}(\operatorname{FE}(\b q^n,\Delta t, \b f),\Delta t, \b f)|| \leq 1 ||\b q^n||, \quad \forall \Delta t \leq \tau_0.
\end{align}

In this paper, we note that the notion of strong stability can be extended into the stochastic setting for some widely used stochastic integrators. More specifically we formalise and generalise an argument of the following form. Should the Euler-Maruyama map be SSP in the following sense $||\operatorname{EM}(\b q^n,\Delta t, \b f,G,\b \Delta W)||\leq ||\b q^n||$ (requiring bounded increments). 
Then a stochastic Heun scheme with a particular convex representation can be bounded as follows
\begin{align}
||\b q^{n+1}|| = ||\text{Stochastic-Heun}(\b q^n,\Delta t, \b f,G,\Delta W)|| \leq 1/2 ||\b q^n || + 1/2 || \operatorname{EM}^2(\b q^n,\Delta t, \b f,G, \Delta \b W)|| \leq ||\b q^n||,
\end{align}
and could inherit nonlinear monotonic properties from the Euler Maruyama scheme.

\subsection{Main contributions and outline of the paper.}
In this work, we establish that three conditions are sufficient for a stochastic Runge-Kutta scheme to be SSP. 
\begin{enumerate}
    \item A provably monotonic numerical method for the Euler Maruyama scheme.\label{item 1}
    \item Bounded increments.
    \item The application of an SSP-RK scheme with a nonzero radius of monotonicity.
\end{enumerate} 
Where condition 2 is required for condition 1.
The extension to Additive Runge-Kutta schemes and Generalised Additive Runge-Kutta schemes is also considered, allowing condition 1 to be relaxed to the following strictly weaker condition. One requires a provably monotonic numerical method for the Forward Euler scheme and a diffusion-only Euler Maruyama scheme. 

In \cref{Sec:Strong stability preserving stochastic integrators} we define what it means for some stochastic RK, ARK and GARK schemes to be SSP. Methods of proofs follow the deterministic literature \cite{kraaijevanger1991contractivity,higueras2005representations}, but are applied in the stochastic setting. In \cref{sec:convergence} we discuss the convergence implications of using bounded increments. More specifically we prove a variant of a theorem stated by Milstein and Tretyakov \cite{milstein2002numerical,milstein2004stochastic}, where normal increments can be truncated and one can still attain strong order of convergence. The outline of proof is sketched in \cite{milstein2002numerical} and an in-depth derivation is provided for the backward Euler Maruyama scheme is found in \cite{milstein2002numerical}. Whilst this result is known, a detailed proof for the Additive Runge-Kutta method involving regularity conditions has been omitted in the literature. 

% Precise literature review:
% \begin{enumerate}
% \item The strong convergence of an Implicit Euler Maruyama scheme to the Itô system is established in \cite{milstein2002numerical}.
% \item In \cite{milstein2004stochastic}, a scheme similar to the SARK \cref{method: Stochastic Additive Runge-Kutta} is proposed, it differs by having additional terms to correct the scheme to converge to the Itô form. \Cref{method: Stochastic Additive Runge-Kutta} differs by being entirely derivative-free and converging to a Rumelin-SDE, a solution dependent on the coefficients in the SARK method. 
% \item 
% In \cite{milstein2004stochastic} it is claimed that using the bounded increments defined in \cref{eq:bounded increments} one can prove the mean square convergence of a SARK scheme (similar to that of \cref{method: Stochastic Additive Runge-Kutta}), the details of the proof are omitted. We provide some of the details in this work.
% \item The convergence of the SARK is established towards an SDE we call the Ruemelin SDE using the same techniques outlined in \cite{milstein2002numerical,milstein2004stochastic}, we elaborate more regarding specific details in the proof. We then include an outline of the (extension \cite{sandu2015generalized}) to include a Stochastic Generalised Additive Runge-Kutta \cref{method: Stochastic Generalised Additive Runge-Kutta}.
% \end{enumerate}

In \cref{sec:Practical Methods} we introduce some practical methods. In \cref{sec:example 1a burgers}, we demonstrate all three conditions are of practical value to creating a monotonic solution. In \cref{sec:Example 1b}, we numerically demonstrate that these conditions may not be strictly necessary. In \cref{sec:Example 1c: 2D Advection} we show that when using multidimensional slope limiters with SSP integration with bounded increments it is possible to produce range-bounded solutions to a stochastic advection equation. In \cref{Example 1d: Incompressible Euler} we show one-dimensional slope limiters can be used with an SSP method with bounded increments to produce range-bounded solutions to Euler's equation with transport-type noise. In \cref{sec:example:1e operator splitting GARK}, we illustrate that when an EM scheme may not be proven monotone one can employ SARK and SGARK methods to construct bounded solutions to a Burgers equation with deformational spatially varying compressible transport noise.

\section{Strong stability preserving stochastic integrators}\label{Sec:Strong stability preserving stochastic integrators}
\subsection{Equations}
We consider the $d$-dimensional stochastic process $\b q(t)$, defined as the solution
of the Stratonovich SDE
\begin{align}
\b q(t) &= \b q(0)  + \int_{0}^{t}\b f (\b q) ds+\sum_{p=1}^{P}\int_{0}^{t} \b g_p(\b q)\circ d  W_s^p; \quad \b q (0)= \b q_0.\label{stratonovich}
\end{align}
over $[0,t]$ for some $t\in \mathbb{R}^{>0}$.
Here the initial state is denoted $\b q_0 \in \mathbb{R}^d$ at $t_0 = 0$, the vector valued drift function is denoted $\b f(\b q):\mathbb{R}^d \rightarrow \mathbb{R}^d$, the matrix valued diffusion is denoted $G(\b q):\mathbb{R}^d \rightarrow \mathbb{R}^{d\times P}$ whose $p$th column is denoted $\b g_p(\b q):\mathbb{R}^d \rightarrow \mathbb{R}^{d}$ for $ p=\lbrace 1,...,P\rbrace$. Finally, we denote a $P$-dimensional $\mathcal{F}_t$-adapted real-valued vector Wiener process as $\left(\b W_t\right)_{t \geqslant 0}=\left( ( W_t^1, \ldots, W_t^P)^{T}\right)_{t \geqslant 0}$, such that the components are scalar adapted independent Wiener processes taking values on a filtered probability space $(\Omega, \mathcal{F}, \mathbb{P};\left(\mathcal{F}_t\right)_{t \geqslant 0})$ with the usual conditions\begin{footnote}{The usual conditions for a filtered probability space. The filtration (a collection of increasing sub $\sigma$-algebras of $\mathcal{F}$) is assumed right continuous (in the sense that $\mathcal{F}_t= \lbrace A \in \mathcal{F}_0: A\in \mathcal{F}_s, \forall s > t \rbrace $) the measure space $(\Omega, \mathcal{F}, \mathbb{P})$ is assumed complete, and it is also assumed that if $\mathbb{P}(A)=0$, then $A\in \mathcal{F}_0$ (see \cite{lord2014introduction,oksendal2013stochastic}).}.
\end{footnote}The $\circ$ denotes Stratonovich integration. These $d$-dimensional systems often arise after the spatial discretisation of stochastic partial differential equations in the method of lines framework. System \cref{stratonovich} can be denoted with the following ``differential" shorthand 
\begin{align}
d\b q = \b f (\b q) dt+  G(\b q) \circ d\b W(t).
\end{align}
% or component-wise with the following expression
% \begin{align}
% q_i(t) = q_i(0) + \int_{0}^{t} f_i(\b q)ds + \sum_{p=1}^{P}\int_{0}^{t} {g}_{i,p}(\b q)\circ dW^{p}_{s};\quad \forall i\in \lbrace 1,...,d \rbrace.
% \end{align}

We shall also refer to the following Itô system
\begin{align}
\b q^{I}(t) &= \b q^{I}(0)  + \int_{0}^{t}\b f (\b q^{I}) ds+\sum_{p=1}^{P}\int_{0}^{t} \b g_p(\b q^{I}) d  W_s^p; \quad \b q^{I} (0)= \b q_0.\label{eq:Ito}
\end{align}
An SDE, whose stochastic integration is understood in the Itô sense \cite{oksendal2013stochastic,karatzas2014brownian}, whose shorthand is \begin{align}
d \b q^I = \b f(\b q^I)dt + G(\b q^I)d\b W_t.
\end{align}

The Stratonovich SDE \cref{stratonovich} can be written as an Itô SDE with a modified drift
\begin{align}
\b q(t) &= \b q(0)  + \int_{0}^{t} \underline{\b f} (\b q) ds+\sum_{p=1}^{P}\int_{0}^{t} \b g_p(\b q) d  W_s^p; \quad \b q (0)= \b q_0,\label{ito form}
\end{align}
whose drift differs by the Itô Stratonovich correction.

% , defined through the quadratic covariation and component-wise taking the following form
% \begin{align}
% \underline{f}^i = f^i + \frac{1}{2}\sum_{p=1}^{P}\sum_{k=1}^{d} g_{j,p}\frac{\partial g_{i,p}}{\partial q_{k}}.   
% \end{align}

\subsection{Well posedness}\label{sec:well posedness estimates}
The existence and uniqueness of the Stratonovich ODE system is well known and requires Lipschitz continuity and linear growth bounds on the Itô Stratonovich corrected drift and diffusion.  These are expressed below, where we have used $L$ to define an arbitrary constant
\begin{align}
 \left\|\underline{\b f} (\b q ) \right\|_2^2  \leq L\left(1+\|\b q \|_2^2\right), \quad
\|G(\b q )\|_{\mathrm{F}}^2 \leq L\left(1+\|\b q \|_2^2\right), \quad \forall \b q \in \mathbb{R}^d,\\
 \left\| \underline{\b f} \left(\b q_1\right)-\underline{\b f}\left(\b q_2\right)\right\|_2 \leq L\left\|\b q_1-\b q_2\right\|_2, \quad
 \left\|G\left(\b q_1\right)-G\left(\b q_2\right)\right\|_{\mathrm{F}} \leq L\left\|\b q_1- \b q_2\right\|_2, \quad \forall \b q_1, \b q_2 \in \mathbb{R}^d .
\end{align}
Whereas the Itô system requires the same conditions but for the uncorrected drift $\b f$ (permitting lower regularity on $G$). 
In \cite{lord2014introduction}, the existence and uniqueness of the Itô system is established using the contraction mapping theorem in the Banach space $(\mathcal{H}_{2, T},\|\cdot\|_{\mathcal{H}_{2, T}})$, the set of $\mathbb{R}^d$-valued predictable processes $\{\b q(t): t \in[0, T]\}$ such that
$
\|\b q \|_{\mathcal{H}_{2, T}}:=\sup _{t \in[0, T]} \mathbb{E}\left[\|\b q(t)\|_2^2\right]^{1 / 2}<\infty.$ Existence and pathwise uniqueness of strong solutions is also established in \cite{kloeden1992stochastic} under the assumptions of joint measurably of both diffusion and drift, initially bounded moments and the Lipschitz and linear growth bounds (page 128 \cite{kloeden1992stochastic}). However, rather than bounding the Frobenius norm they (equivalently) require that 
\begin{align}
||\b g_p||_{2}^2 \leq L\left(1+||\b q||_2^2\right), \quad ||\b g_p(\b q_1)-\b g_p(\b q_2)||_{2}^2 \leq L||\b q_1 - \b q_2||_{2}^2, \quad \forall p\in \lbrace 1,...,P\rbrace.
\end{align}
Some of these inequalities are used later in this paper when establishing estimates required for the Fundamental theorem of mean square convergence \cite{milstein2002numerical}.

\subsection{Methods}\label{sec:methods}
Similar in analogy to how the deterministic SSP literature builds schemes in terms of the forward Euler flow map, we will discuss particular stochastic Runge-Kutta methods built upon convex combinations of the Euler Maruyama scheme defined below. 
\begin{method}[Euler-Maruyama]\label{method:EM}
The Euler-Maruyama scheme is a numerical flow map \newline $EM(\b q^n,\Delta t,\b f, G, \Delta \b S): \b q^{n} \in \mathbb{R}^{d} \rightarrow  \b q^{n+1}\in \mathbb{R}^{d}$, advancing the current state variable $\b q^n$ by a increment of time $\Delta t \in \mathbb{R}^{>0}$, as follows
\begin{align}
\b q^{n+1} &= \b q^n + \b f(\b q^{n}) \Delta t + G(\b q^n) \Delta\b S, \label{eq:EM}
\end{align}
where $\Delta \b S$ is a realisation of a $P$ dimensional signal (typically drawn from a distribution) and $G(\b q) \Delta\b S$, is understood as a matrix vector product $(\mathbb{R}^{d\times P},\mathbb{R}^{P})\rightarrow \mathbb{R}^{d}$, defined by $G(\b q) \Delta\b S = \sum_{p=1}^{p=P}\b g_p(\b q^n) \Delta S^{p}.$ 
\end{method}
Typically, in the context of Brownian motion $\Delta W^p = \int_{t^{n}}^{t^{n+1}}dW^p(s) = W^p(t^{n+1}) - W^p(t^{n}) \sim \sqrt{\Delta  t} N(0,1)$, one samples from a scaled normally distribution, and one can establish convergence to the Itô equation \cref{eq:Ito}. However, this is not a necessary condition for either weak or strong convergence of the numerical scheme. Sampling from random variables with bounded increments will be essential for the theoretical developments in this paper and allows us to define (in \cref{def:Strong Stability Preservation of Euler-Maruyama scheme}) an EM scheme with an SSP property. We postpone how using bounded increments affects the convergence until later in \cref{sec:convergence}.

\begin{definition}[Strong Stability Preservation of Euler-Maruyama scheme.]\label{def:Strong Stability Preservation of Euler-Maruyama scheme} The Euler-Maruyama scheme, will be defined to be SSP  with respect to the arbitrary convex semi-norm $||\cdot ||$, if there exists a critical timestep $\tau_{0} \geq 0$, such that for all positive timesteps smaller than this $\forall \Delta t \in(0,\tau_{0})$ and for all possible sampled increments $\Delta S^{p}$. The following property holds
\begin{align}
||\b q^{n+1}|| &= ||\b q^n + \b f(\b q^{n}) \Delta t + \sum_{p=1}^{P} \b g_p(\b q^{n}) \Delta S^{p} || \leq ||\b q^{n}||. \label{eq: EMSSP condition} 
\end{align}

This particular definition requires  $\Delta S^p$, to be bounded.

\end{definition}

We now consider a particular Stochastic Runge-Kutta \cref{method: Stochastic Runge-Kutta} and subsequently define an SSP property (\cref{def:SSP-SRK}).

\begin{method}[Stochastic Runge-Kutta]\label{method: Stochastic Runge-Kutta}
An $s$-stage  Stochastic RK method is defined by the Butcher Tableau $A\in \mathbb{R}^{s\times s}$, $b\in \mathbb{R}^s$. Such that the old value $\b q^n$ is updated by the use of $s$ internal substages $k^i$ as follows
\begin{align}
\b q^{n+1}= \b q^n+\Delta t \sum_{i=1}^s b_i \b f(\b k^i) + \sum_{i=1}^s b_i  G(\b k^i)\Delta \b S, \quad \text{where} \quad \b k^i= \b q^n + \Delta t \sum_{j=1}^s a_{i j} \b f(\b k^j) + \sum_{j=1}^s a_{i j}G(\b k^j)\Delta \b S. \label{eq:SSSPRK method}
\end{align}
The increments $\Delta\b S$ are sampled once at $t^n$, and contracted against the diffusion matrix $G$, such that the same increments are reused at different sub-stages of the RK method. 
\end{method}

\begin{definition}[SSP-SRK]\label{def:SSP-SRK}
The Stochastic Runge-Kutta \cref{method: Stochastic Runge-Kutta} is said to be strong stability preserving with a radius of monotonicity $C$ if the numerical solution $\b q^{n+1}$ generated by the numerical method in \cref{eq:SSSPRK method} satisfies the following property 
\begin{align}
    ||\b q^{n+1}|| \leq ||\b q^n||, \quad \forall \Delta t \leq C \tau_0, \quad  \forall \Delta S_i^{p}.
\end{align}
Where it is assumed that there exists a critical timestep $\tau_0$ such that the Euler Maruyama scheme is SSP as specified in \cref{def:Strong Stability Preservation of Euler-Maruyama scheme}. \Cref{method: Stochastic Runge-Kutta} is said to be internally strong stability preserving with radius of monotonicity $C$ when the substages satisfy $||\b q^{n+1}||\leq ||\b k^{i}|| \leq ||\b k^{i-1} ||  \leq ||\b q^n||$, for all $\Delta t \leq C\tau_0$, for all sampled increments $\Delta \b S$, for all $i\in \lbrace 1,...,s\rbrace$.
\end{definition}

\begin{remark}
The convex combination (Shu-Osher) representation of a Runge-Kutta method enables the preservation of monotonic properties that are strictly stronger than contractivity for any convex functional. For example, the Shu-Osher representation allows the enforcement of local maximum principles at each substage in a Runge Kutta method. 
\end{remark}

Throughout this work we will adopt the compact notation from \cite{higueras2005representations}, where a butcher tableau $(A,\b b)$ is turned into the following matrix
\begin{align}
\mathbb{A}=
\begin{pmatrix}
A & 0 \\
\b b^T & 0
\end{pmatrix} \in \mathbb{R}^{(s+1)\times (s+1)}. 
\end{align}

\begin{theorem}[Stochastic Runge-Kutta \cref{method: Stochastic Runge-Kutta} is SSP with radius of monotonicity $R(\mathbb{A})$]
Assuming there exists a critical timestep $\tau_0$ such that EM is assumed contractive with respect to a convex functional, then \cref{method: Stochastic Runge-Kutta} is also contractive with respect to the same convex functional with timestep condition $\Delta t \leq R(\mathbb{A})\tau_0$, where 
\begin{align}
R(\mathbb{A})=\max \{r \mid r \geq 0;\forall s\in[0,r], (I+s\mathbb{A})\in \operatorname{GL}(s+1, \mathbb{R}),(I+s\mathbb{A})^{-1}s\mathbb{A}\geq 0,(I+s\mathbb{A})^{-1}\b e_s\geq 0  \} . \label{eq: Kraiijevanger conditions}
\end{align}
\end{theorem}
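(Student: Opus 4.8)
The plan is to transcribe the classical monotonicity argument of \cite{kraaijevanger1991contractivity,higueras2005representations} into the stochastic setting, replacing every forward-Euler building block by an Euler--Maruyama one. First I would stack the internal stages and the update of \cref{method: Stochastic Runge-Kutta} into the augmented vector $\b\eta:=(\b k^1,\dots,\b k^s,\b q^{n+1})^\top\in(\RR^d)^{s+1}$ and collect the increments $\b\Phi_j:=\Delta t\,\b f(\b\eta_j)+G(\b\eta_j)\Delta\b S$ into $\b\Phi=(\b\Phi_1,\dots,\b\Phi_{s+1})^\top$ (the last block being immaterial, since $\mathbb{A}$ has a vanishing last column). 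Then \cref{eq:SSSPRK method} becomes the single compact relation $\b\eta=(\b e\otimes I_d)\b q^n+(\mathbb{A}\otimes I_d)\b\Phi$ with $\b e=(1,\dots,1)^\top\in\RR^{s+1}$; it is precisely the defining feature of \cref{method: Stochastic Runge-Kutta} — that the \emph{same} realisation $\Delta\b S$ is contracted against $G$ at every stage — that allows $\b\Phi$ to be written with one fixed $\Delta\b S$.

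Next, fix $\Delta t\le R(\mathbb{A})\tau_0$ and choose a splitting parameter $r$ with $\Delta t/\tau_0<r\le R(\mathbb{A})$; for the SSP tableaux of interest $R(\mathbb{A})\ge1$, and I would take $r\in[1,R(\mathbb{A})]$. By the definition \cref{eq: Kraiijevanger conditions} of $R(\mathbb{A})$, the matrix $I+r\mathbb{A}$ is invertible and
\[
\b v:=(I+r\mathbb{A})^{-1}\b e\ge0,\qquad \mathbb{P}:=(I+r\mathbb{A})^{-1}r\mathbb{A}\ge0
\]
entrywise; moreover $\mathbb{P}\b e+\b v=(I+r\mathbb{A})^{-1}(I+r\mathbb{A})\b e=\b e$, so every row of $(\b v\mid\mathbb{P})$ sums to one, and $\mathbb{P}$ inherits the zero last column of $\mathbb{A}$. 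Adding $(r\mathbb{A}\otimes I_d)\b\eta$ to both sides of the compact relation and applying $(I+r\mathbb{A})^{-1}\otimes I_d$ gives, componentwise,
\[
\b\eta_i=v_i\,\b q^n+\sum_{j=1}^{s}p_{ij}\,\operatorname{EM}\!\Big(\b\eta_j,\ \tfrac{\Delta t}{r},\ \b f,\ G,\ \tfrac1r\Delta\b S\Big),\qquad i=1,\dots,s+1,
\]
because $\tfrac1r(\b\Phi_j+r\b\eta_j)=\b\eta_j+\tfrac{\Delta t}{r}\b f(\b\eta_j)+\tfrac1r G(\b\eta_j)\Delta\b S$ is exactly an Euler--Maruyama step (\cref{eq:EM}) of length $\Delta t/r$ driven by the increment $\Delta\b S/r$.

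Now I would invoke the Euler--Maruyama SSP hypothesis \cref{def:Strong Stability Preservation of Euler-Maruyama scheme}: since $\Delta t/r<\tau_0$ and (see the last paragraph) $\Delta\b S/r$ is again an admissible increment, each term above satisfies $\|\operatorname{EM}(\b\eta_j,\Delta t/r,\b f,G,\Delta\b S/r)\|\le\|\b\eta_j\|$. Putting $x_i:=\|\b\eta_i\|$ and $x_0:=\|\b q^n\|$, convexity of $\|\cdot\|$ together with $v_i+\sum_j p_{ij}=1$ and nonnegativity yields the monotone system $x_i\le v_i x_0+\sum_{j=1}^{s}p_{ij}x_j$ for $i=1,\dots,s+1$. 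Picking $i^\star\in\{1,\dots,s\}$ with $x_{i^\star}=\max_{1\le j\le s}x_j$ and using the row-sum identity forces $x_{i^\star}\le x_0$ (the degenerate case $v_{i^\star}=0$ is handled by the standard irreducibility argument of the deterministic theory, or by taking $r$ slightly below $R(\mathbb{A})$ and passing to the limit). Feeding $\max_{1\le j\le s}x_j\le x_0$ back into the $i=s+1$ inequality gives $\|\b q^{n+1}\|=x_{s+1}\le x_0=\|\b q^n\|$, which is the claim.

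The one genuinely new point — everything else being a transcription of the deterministic argument — is the admissibility of the \emph{rescaled} increment $\Delta\b S/r$ produced by the Shu--Osher recombination: \cref{def:Strong Stability Preservation of Euler-Maruyama scheme} grants the Euler--Maruyama contraction only for sampled increments in the prescribed bounded set, yet the identity above feeds it $\Delta\b S/r$. This is exactly where the standing requirement that the increments be bounded is used: for $r\ge1$ the dilation $\Delta\b S\mapsto\Delta\b S/r$ only contracts the increment toward the origin, so admissibility is preserved provided the admissible set is star-shaped about $0$ (a centred ball or box, the natural model for bounded increments), and such an $r$ exists because $R(\mathbb{A})\ge1$ for consistent SSP tableaux; if instead $R(\mathbb{A})<1$ one must additionally assume the Euler--Maruyama contraction on the dilated increment set $R(\mathbb{A})^{-1}\mathcal{S}$. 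I expect this bookkeeping of the driving increment, rather than any analytic difficulty, to be the crux of adapting the deterministic result.
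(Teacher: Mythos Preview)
Your proposal is correct and follows essentially the same route as the paper: rewrite the stochastic RK scheme in the augmented block form governed by $\mathbb{A}$, add $r\mathbb{A}$ times the stage vector to both sides, invert $I+r\mathbb{A}$, and read off a convex combination of $\b q^n$ and Euler--Maruyama steps under the Kraaijevanger conditions. The paper streamlines the reduction by introducing the auxiliary function $\mathfrak{F}(\b q):=\b f(\b q)+G(\b q)\Delta\b S\,(\Delta t)^{-1}$, so that $\operatorname{EM}(\b q,\Delta t,\b f,G,\Delta\b S)=\operatorname{FE}(\b q,\Delta t,\mathfrak{F})$ and \cref{method: Stochastic Runge-Kutta} is \emph{literally} a deterministic RK method for $\mathfrak{F}$; this lets the deterministic proof be quoted verbatim rather than re-derived at the block level as you do. Conversely, your explicit treatment of the rescaled increment $\Delta\b S/r$ and its admissibility in \cref{def:Strong Stability Preservation of Euler-Maruyama scheme} is more careful than the paper, which does not discuss this point (and writes the scaled increment as $\Delta\b S/r^{1/2}$); your observation that star-shapedness of the bounded-increment set about the origin together with $r\ge 1$ suffices is the right bookkeeping here.
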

The conditions in \cref{eq: Kraiijevanger conditions} are algebraically equivalent to the Kraaijevanger \cite{kraaijevanger1991contractivity} conditions \cite{higueras2005representations,higueras2004strong} on the butcher tableau $(A,b)$, namely $I+sA$ is nonsingular, and $1+s b^T(I+s A)^{-1} e_s \geq 0$, $A(I+sA)^{-1} \geq 0$, $b^T(I+s A)^{-1} \geq 0$, and $(I+sA)^{-1} e_s \geq 0$.

\begin{proof}
By identifying the vector-valued function $\mathfrak{F}(\b q): \mathbb{R}^n \mapsto \mathbb{R}^n$ associated with
\begin{align}
\mathfrak{F}(\b q) := \b f(\b q^n) + G(\b q^n)\Delta \b S(\Delta t)^{-1},
\end{align}
one can make an equivalence between the Euler Maruyama Scheme, and the forward Euler scheme as follows 
\begin{align}
 FE(\b q^n,\Delta t,  \mathfrak{F} ) = \b q^n + \Delta t \mathfrak{F} (\b q^n) = \operatorname{EM}(\b q^n,\Delta t, \b f, G, \Delta S).
\end{align} Furthermore, this same identification allows the  stochastic Runge-Kutta \cref{method: Stochastic Runge-Kutta}, to be defined by a single $s$-stage RK method defined by a real matrix $A\in \mathbb{R}^{s\times s}$, and a real vector $b\in \mathbb{R}^s$. Such that the old value $\b q^n$ is updated by the use of $s$ internal substages $k^i$ as follows
\begin{align}
\b q^{n+1}= \b q^n+\Delta t \sum_{i=1}^s b_i \mathfrak{F}(\b k^i) \quad \text{where} \quad \b k^i=\b q^n+ \Delta t \sum_{j=1}^s a_{i j} \mathfrak{F}(\b k^j) \quad \forall i\in \lbrace 1,...,s\rbrace. \label{eq:SSSPRK is SSPRK}
\end{align}

These identifications, allow the entire deterministic SSPRK theory to be directly translated into the setting of stochastic Runge-Kutta methods. The method of proof can then follow one established in the deterministic literature (see \cite{kraaijevanger1991contractivity,higueras2005representations,gottlieb2005high}), repeated here for clarity and self-containedness. Add $r\sum_{j=1}^{s}a_{ij}\b k^{j}$, to both the LHS and RHS of the substage equation for $\b k^i$, and add $r\sum_{j=1}^{s}b_{j}\b k^{j}$ to both sides of the equation for the final stage $\b q^{n+1}$ in \cref{eq:SSSPRK is SSPRK} to give
\begin{align}
\b k^{i} + r\sum_{j=1}^{s}a_{ij}\b k^{j} &= \b q^{n} + r\sum_{j=1}^{s}a_{ij}\b k^{j} + \Delta t\sum_{j=1}^{s}a_{ij}\mathfrak{F}(\b k^{j}), \quad  \forall i \in \lbrace 1,...,s\rbrace,\\
\b q^{n+1} + r\sum_{j=1}^{s}b_{j}\b k^{j} &= \b q^n + r\sum_{j=1}^{s}b_{j}\b k^{j} + \Delta t \sum_{j=1}^{s}b_{j}\mathfrak{F}(\b k^{j}).
\end{align}

This allows the scheme to be written in terms of Euler Maruyama flow maps as follows
\begin{align}
\b k^{i} + r\sum_{j=1}^{s}a_{ij}\b k^{j} &= \b q^{n} + r\sum_{j=1}^{s}a_{ij}\operatorname{EM}(\b k^j,\Delta t/r, \b f, G, \Delta S/r^{1/2}),\quad  \forall i \in \lbrace 1,...,s\rbrace,\label{eq:substages}\\
\b q^{n+1} + r\sum_{j=1}^{s}b_{j}\b k^{j} &= \b q^n + r\sum_{j=1}^{s}b_{j}\operatorname{EM}(\b k^j,\Delta t/r, \b f, G, \Delta S/r^{1/2}).\label{eq:finalstage}
\end{align}
In the instance of a one dimensional SDE when $d=1$, one can write \cref{eq:finalstage,eq:substages} as
\begin{align}
    (I+r \mathbb{A})\b S = q^n \b e_{s+1} +r \mathbb{A} \operatorname{EM}(\b S).
\end{align}
This is a convex combination under the conditions, $(I+r \mathbb{A})^{-1} r\mathbb{A}>0$, $(I+r \mathbb{A})^{-1} \b e_{s+1}>0$, since these matrices add to give the identity matrix. For $d>1$, one can employ the notation in \cite{higueras2005representations}, which conveniently extends the theory to vector-valued ODEs (and SDEs).  We let $S := ((\b k^1)^T,...,(\b k^s)^T, \b (\b q^{n+1})^T)^T\in \mathbb{R}^{(s+1)d}$ be a vector containing the sub-stages and the final stage. We let \begin{align}
\operatorname{EM}(\b S) := (\operatorname{EM}(\b k^1)^T,\operatorname{EM}(\b k^1)^T,...,\operatorname{EM}(\b k^n)^T,0\b e_d^T )^T.    
\end{align} Then \cref{eq:finalstage,eq:substages} is written as
\begin{align}
\b S + (\mathbb{A}\otimes_{kron} I )\b S =  \b e_{s+1}\otimes_{kron}\b q^n  + r(\mathbb{A}\otimes_{kron}  I )\operatorname{EM}(\b S),
\end{align}
where $
     \mathbb{A}\otimes_{kron}I \in \mathbb{R}^{(s+1)d\times (s+1)d}$.
So as before, a convex combination can be achieved below
\begin{align}
\b S = (I_{(s+1)d}+ (\mathbb{A}\otimes_{kron} I ))^{-1} (\b e \otimes_{kron} \b q^n)+ (I+ (\mathbb{A}\otimes_{kron} I ))^{-1}r(\mathbb{A}\otimes_{kron}  I )\operatorname{EM}(\b S)
\end{align}
under the manipulation of the Kronecker product, the same Kraiijevanger conditions are observed.
\end{proof}

We will discuss the strong stability preservation of a particular Stochastic Additive Runge-Kutta (SARK) method built upon convex combinations of a diffusion-only Euler Maruyama scheme and a deterministic forward Euler scheme. 

\begin{method}[Stochastic Additive Runge-Kutta]\label{method: Stochastic Additive Runge-Kutta} We define the  Stochastic Additive Runge-Kutta flow map associated with treating the drift $\b f$ and diffusion $G$ with the Butcher tableaus $(A,b)$, $(\tilde{A},\tilde{b})$ respectively by
\begin{align}
    \b q^{n+1} &= \b q^{n} + \Delta t \sum_{i=1}^{s}b_i \b f(\b k^i) + \sum_{i=1}^{s}\tilde{b}_{i} G(\b k^i)\Delta\b S, \quad \text{where}\quad \b k^{i} = \b q^{n} + \Delta t \sum_{j=1}^{s}a_{i,j}\b f(\b k^j) + \sum_{j=1}^{s}\tilde{a}_{i,j}G(\b k^j)\Delta \b S.
\end{align}
Where $i\in \lbrace 1,..., s\rbrace$, and as before $G\Delta S$ denotes a matrix vector product.
\end{method}
The SARK method is a strict generalisation of the previous SRK method, allowing the drift to be treated with a different Butcher-Tableau than the diffusion. The  Stochastic Additive Runge-Kutta method inherits monotonic properties from both the FE flow map and the EM without drift flow map, a specific consequence of choosing this particular additive structure. This is particularly useful if the Euler Maruyama scheme cannot be proven monotone in the presence of both drift and diffusion.

\begin{theorem}[Stochastic Additive Runge-Kutta \cref{method: Stochastic Additive Runge-Kutta} is SSP under the usual ARK extension of the Kraiijevanger conditions.]
It is assumed that one can establish the following properties,
\begin{align}
    ||\b q^n+\Delta t\b f(\b q^n)||&\leq||\b q^n||, \quad \forall \Delta t\leq \tau_f\label{eq:ARK1},\\
    ||\b q^n+  G(\b q^n) \Delta \b S||&\leq||\b q^n||,\quad \forall \Delta t \leq \tau_g, \quad \forall \Delta
    \b S.\label{eq:ARK2}
\end{align}
Then
if $\Delta t\leq\tau_f r$ and $\Delta t \leq \tau_g \tilde{r}$, $\forall (r,\tilde{r})\in \mathcal{R}(\mathbb{A},\tilde{\mathbb{A}})$ then \cref{method: Stochastic Additive Runge-Kutta} preserves the desired notion of nonlinear stability. 
The region of absolute monotonicity, denoted by $\mathcal{R}(\mathbb{A}, \tilde{\mathbb{A}})$, is defined by the set of positive values $(r,\tilde{r})$ in which the natural extension of Kraaijevangers conditions \cite{higueras2005representations,kraaijevanger1991contractivity} hold
\begin{align}
& I+r \mathbb{A}+\tilde{r} \tilde{\mathbb{A}}\in GL(s+1,\mathbb{R}),\\
& (I+r \mathbb{A}+\tilde{r} \tilde{\mathbb{A}})^{-1} \mathbb{A} \geq 0 , \\
& (I+r \mathbb{A}+\tilde{r} \tilde{\mathbb{A}})^{-1} \tilde{\mathbb{A}} \geq 0, \\
& (I+r \mathbb{A}+\tilde{r}\tilde{\mathbb{A}})^{-1} e \geq 0.
\end{align}
Where the inequalities are understood point-wise, and if the set $\mathcal{R}(\mathbb{A}, \tilde{\mathbb{A}})$ has measure 0, the ARK method is not SSP and has no region of absolute monotonicity. 
\end{theorem}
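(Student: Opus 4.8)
The plan is to transplant the proof of the Stochastic Runge--Kutta theorem above, now carrying \emph{two} elementary flow maps: the deterministic forward-Euler map for the drift and a diffusion-only Euler--Maruyama step for the noise. Fix a pair $(r,\tilde r)\in\mathcal R(\mathbb A,\tilde{\mathbb A})$ with $r,\tilde r>0$ and suppose $\Delta t\le\tau_f r$ and $\Delta t\le\tau_g\tilde r$. Starting from the substage and final-stage equations of \cref{method: Stochastic Additive Runge-Kutta}, I would add $r\sum_{j}a_{ij}\b k^{j}+\tilde r\sum_{j}\tilde a_{ij}\b k^{j}$ to both sides of the equation defining $\b k^{i}$ and $r\sum_{j}b_{j}\b k^{j}+\tilde r\sum_{j}\tilde b_{j}\b k^{j}$ to both sides of the equation defining $\b q^{n+1}$. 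On the right-hand side the drift contribution then reorganises as $r\,a_{ij}\big(\b k^{j}+\tfrac{\Delta t}{r}\b f(\b k^{j})\big)=r\,a_{ij}\,\operatorname{FE}(\b k^{j},\Delta t/r,\b f)$, and the diffusion contribution as $\tilde r\,\tilde a_{ij}\big(\b k^{j}+G(\b k^{j})\tfrac{\Delta\b S}{\tilde r}\big)$, i.e. $\tilde r\,\tilde a_{ij}$ times a diffusion-only Euler--Maruyama step evaluated at the rescaled increment $\Delta\b S/\tilde r$; the $\b q^{n+1}$ row reorganises identically with $b,\tilde b$ replacing $a,\tilde a$.

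Next I would stack the stages into $\b S=((\b k^{1})^{T},\dots,(\b k^{s})^{T},(\b q^{n+1})^{T})^{T}\in\mathbb R^{(s+1)d}$ and write $\operatorname{FE}(\b S)$, $\operatorname{EM}_{g}(\b S)$ for the vectors obtained by applying the forward-Euler map, respectively the diffusion-only Euler--Maruyama step, componentwise to the stored stages (with a zero in the last $d$-block, exactly as in the Stochastic Runge--Kutta proof). Using the compact matrices $\mathbb A,\tilde{\mathbb A}$ and Kronecker products, the augmented system becomes $\big(I+(r\mathbb A+\tilde r\tilde{\mathbb A})\otimes I\big)\b S=(\b e\otimes\b q^{n})+(r\mathbb A\otimes I)\operatorname{FE}(\b S)+(\tilde r\tilde{\mathbb A}\otimes I)\operatorname{EM}_{g}(\b S)$. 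Since $(r,\tilde r)\in\mathcal R(\mathbb A,\tilde{\mathbb A})$ the matrix $M:=I+r\mathbb A+\tilde r\tilde{\mathbb A}$ is invertible, so premultiplying by $M^{-1}\otimes I$ writes $\b S$ as an affine combination of $\b q^{n}$, $\operatorname{FE}(\b S)$ and $\operatorname{EM}_{g}(\b S)$ with coefficient arrays $M^{-1}\b e$, $M^{-1}r\mathbb A$ and $M^{-1}\tilde r\tilde{\mathbb A}$. Non-negativity of these three arrays is precisely the remaining defining inequalities of $\mathcal R(\mathbb A,\tilde{\mathbb A})$, while the row-sum identity $M^{-1}\b e+M^{-1}r\mathbb A\,\b e+M^{-1}\tilde r\tilde{\mathbb A}\,\b e=M^{-1}(I+r\mathbb A+\tilde r\tilde{\mathbb A})\b e=\b e$ (with $\b e$ the all-ones vector, and using that the last columns of $\mathbb A$ and $\tilde{\mathbb A}$ vanish) shows that it is a genuine convex combination, block-row by block-row.

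Finally I would invoke the two hypotheses. Because $\Delta t/r\le\tau_f$, assumption \cref{eq:ARK1} gives $\|\operatorname{FE}(\b k^{j},\Delta t/r,\b f)\|\le\|\b k^{j}\|$; because $\Delta t/\tilde r\le\tau_g$ and $\Delta\b S/\tilde r$ is again an admissible increment, assumption \cref{eq:ARK2} gives $\|\b k^{j}+G(\b k^{j})\Delta\b S/\tilde r\|\le\|\b k^{j}\|$. Applying convexity of the semi-norm to each block-row and then the usual absolute-monotonicity argument of \cite{kraaijevanger1991contractivity,higueras2005representations} (ordering the stages for an explicit pair of tableaux, or, for implicit tableaux, checking that the associated fixed-point map leaves the ball $\{\|\cdot\|\le\|\b q^{n}\|\}$ invariant) yields $\|\b k^{i}\|\le\|\b q^{n}\|$ for every $i$ and hence $\|\b q^{n+1}\|\le\|\b q^{n}\|$. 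The main obstacle is not a single deep step but careful bookkeeping at two points: justifying the increment rescaling $\Delta\b S\mapsto\Delta\b S/\tilde r$ against assumption \cref{eq:ARK2} together with the companion step-size bound $\Delta t\le\tau_g\tilde r$, and making the convex-combination conclusion rigorous for implicit tableaux where one cannot simply induct on the stage index; the degenerate boundary cases $r=0$ (forcing $\mathbb A=0$) and $\tilde r=0$ (forcing $\tilde{\mathbb A}=0$) should be handled separately.
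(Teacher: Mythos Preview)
Your proposal is correct and follows essentially the same route as the paper: identify the forward-Euler and diffusion-only Euler--Maruyama maps, add $r\mathbb A\b S+\tilde r\tilde{\mathbb A}\b S$ to both sides, invert $M=I+r\mathbb A+\tilde r\tilde{\mathbb A}$, and use the Kraaijevanger-type sign conditions together with the row-sum identity $M^{-1}\b e+M^{-1}r\mathbb A\b e+M^{-1}\tilde r\tilde{\mathbb A}\b e=\b e$ to exhibit each stage as a convex combination. Your write-up is in fact slightly more careful than the paper's on the increment rescaling $\Delta\b S\mapsto\Delta\b S/\tilde r$, on the implicit-tableau fixed-point argument, and on the degenerate endpoints $r=0$ or $\tilde r=0$; the paper simply notes that the $d>1$ case follows by Kronecker-product manipulations as in \cite{higueras2005representations}.
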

\begin{proof}
We identify the following FE, and diffusion EM flow maps,
\begin{align}
\operatorname{FE}(\b q,\Delta t, \b f) = \b q^n + \Delta t \b f(\b q^n), \quad 
\operatorname{EM}(\b q,\Delta S, G) = \b q^n + \Delta t \sum_{p=1}^{P}(\Delta t)^{-1} \b g_p(\b q^n)\Delta S^{p},
\end{align}
each assumed to be SSP under the critical timestep condition $\tau_{f},\tau_{G}$ respectively \cref{eq:ARK1,eq:ARK2}. Then the SARK \cref{method: Stochastic Additive Runge-Kutta} is decomposable in the way deterministic Additive RK methods are (\cite{kraaijevanger1991contractivity,higueras2005representations}), using the notation $\mathbb{A},\tilde{\mathbb{A}}$ 
allows the following compact representation,
\begin{align}
\b S= \b e_{s+1} \otimes_{kron} \b q^n+\Delta t(\mathbb{A} \otimes_{kron} I) \b F(\b S) + \Delta t(\tilde{\mathbb{A}} \otimes_{kron} I) \mathcal{G}(\b S).
\end{align}
Where 
\begin{align}
\b e=(1, \ldots, 1)^T \in \mathbb{R}^{s+1},\quad \b S=\left((\b k^1)^T, \ldots, (\b k_s)^T, \b q_{n+1}^T\right)^T \in \mathbb{R}^{(s+1) d},\\
\b F(\b S)=\left(\b f\left(\b k^1\right)^T, \ldots, \b f\left(\b k^s\right)^T, 0\right)^T \in \mathbb{R}^{(s+1) d},\\
\mathcal{G}(S)=\Delta t^{-1} \left( \left(\sum_{p=1}^{P}\b g_p\left(\b k^1\right) \Delta S^{p} \right)^T , \ldots, \left(\sum_{p=1}^{P}\b g_p\left(\b k^{s}\right) \Delta S^{p} \right)^T, 0\right)^T \in \mathbb{R}^{(s+1) d}.
\end{align} The symbol $\otimes_{kron}$ denotes the Kronecker product. Let $d = 1$ such that we can consider 
\begin{align}
\b S &=  q^n \b e_{s+1} + \Delta t \mathbb{A}\b F(\b S) + \Delta t \tilde{\mathbb{A}} \mathcal{G}(\b S)\label{eq:additive one d}
\end{align}
We add $r \mathbb{A} \b S $ and $\tilde{r} \tilde{\mathbb{A}} \b S $ to both sides of \cref{eq:additive one d}, to give 
\begin{align}
\b S + r \mathbb{A} \b S  + \tilde{r} \tilde{\mathbb{A}} \tilde{\b S} &=  q^n \b e_{s+1} + r \mathbb{A} \left(\b S + \frac{\Delta t}{r} \b F(\b S)\right) + \tilde{r} \tilde{\mathbb{A}} \left(\b S + \frac{\Delta t}{\tilde{r}} \mathcal{G}(\b S) \right).
\end{align}
We now define, $M = I+ r \mathbb{A} \b S  + \tilde{r} \tilde{\mathbb{A}} \tilde{\b S}$, such that we have 
\begin{align}
\b S  &= M^{-1} q^n  \b e_{s+1} + r M^{-1}\mathbb{A} \left(\b S + \frac{\Delta t}{r} \b F(\b S)\right) +   \tilde{r} M^{-1} \tilde{\mathbb{A}} \left(\b S + \frac{\Delta t}{\tilde{r}} \mathcal{G}(\b S) \right)\label{eq:SSPARK 1d}.
\end{align}

Suppose that the following conditions hold (natural extension of Kraiijevanger conditions)
\begin{align}
M := I+ r \mathbb{A} \b S  + \tilde{r} \tilde{\mathbb{A}} \tilde{\b S} \in \operatorname{GL}(\mathbb{R}^{s+1}),\quad 
M^{-1}\b e \geq 0, \quad M^{-1}\mathbb{A}\geq 0, \quad M^{-1}\tilde{\mathbb{A}}\geq 0.
\end{align}
Since $M^{-1} + M^{-1}r\mathbb{A} + M^{-1}\tilde{r}\tilde{\mathbb{A}} = M^{-1}(I + r\mathbb{A} + \tilde{r}\tilde{\mathbb{A}}) = MM^{-1} = I$,  then \cref{eq:SSPARK 1d} is a convex combination. 
% Convex combinations have the following property
% \begin{align}
% ||\b S|| &\leq M^{-1} ||q^n  \b e_{s+1}|| + r M^{-1}\mathbb{A} ||\b S + \frac{\Delta t}{r} \b F(\b S) || +  \tilde{r} M^{-1} \tilde{\mathbb{A}} || \b S + \frac{\Delta t}{\tilde{r}} \mathcal{G}(\b S) ||\\
% &\leq M^{-1} ||q^n  \b e_{s+1}|| + r M^{-1}\mathbb{A} ||\b S || +  \tilde{r} M^{-1} \tilde{\mathbb{A}} ||\b S || = ||\b S||
% \end{align}
With $\Delta t \leq \min(r \tau_f,\tilde{r}\tau_g)$ one attains monotonicity of the stochastic additive Runge-Kutta scheme. The higher dimensional case when $d\geq 1$, follows analogously under standard manipulations of the Kronecker product $\otimes_{kron}$, see \cite{higueras2005representations}. 
\end{proof}

% One can consider different splittings, such as $||q^n+\Delta t f(q^n) + g_1\Delta W^1||$ + $||q^n+ \sum_{i>1}g_i\Delta W^{i}||< || q^n ||$. Interestingly these are all guaranteed by the previous definition of SSP of EM but with $\tau_0/2$. 

We define a particular Generalised Additive Runge-Kutta (GARK) method for stochastic schemes as follows. 
\begin{method}[Stochastic Generalised Additive Runge-Kutta]\label{method: Stochastic Generalised Additive Runge-Kutta} We define the  Stochastic Generalised Additive Runge-Kutta flow map as follows
\begin{align}
     \b k_{i}^f &= \b q^{n} + \Delta t \sum_{j=1}^{s^f}a^{f,f}_{i,j}\b f(\b k^f_j) + \sum_{j=1}^{s^g} a^{f,g}_{i,j}G(\b k^g_j)\Delta \b S,\quad i \in \lbrace 1,...,s^{f}\rbrace, \label{eq:SGARK_kf}\\
\b k_{i}^g &= \b q^{n} + \Delta t \sum_{j=1}^{s^f}a^{g,f}_{i,j}\b f(\b k^f_j) + \sum_{j=1}^{s^g}{a}^{g,g}_{i,j}G(\b k^g_j)\Delta \b S,\quad i \in \lbrace 1,...,s^{g}\rbrace, \label{eq:SGARK_kg}\\
    \b q^{n+1} &= \b q^{n} + \Delta t \sum_{i=1}^{s^{f}}b^f_i \b f(\b k^f_i) + \sum_{i=1}^{s^g}{b}^g_{i} G(\b k^g_i)\Delta\b S. \label{eq:SGARK_final}
\end{align}
GARK schemes allow different stage values for different components. 
\end{method}
The extension of the SSP theory to stochastic systems can be made using the theory developed in \cite{sandu2015generalized}, whilst making associations to appropriate FE maps similar to the previous examples in this paper. Rather than elaborate on such a construction, we shall instead introduce some practical examples (\cref{method:SSP- Additive Operator Splitting}, \cref{method:SSP- Sequential Operator Splitting}) based on operator splitting later in this paper.

\subsection{Convergence}\label{sec:convergence}
One requires having bounded increments for the stochastic generalisation of the SSP property. One approach to having bounded increments is to sample from bounded distributions agreeing in moments with the normal distribution. Two classical methods are as follows.

\begin{example}
The P-dimensional random variable whose components are two-point random variables 
\begin{align}
    \mathbb{P} \left(\Delta \widetilde{W}^p  =\pm \sqrt{\Delta t} \right) = 1/2,\label{eq:2point_random_variable}
\end{align}
has the same first 3 moments as that of the $N(0,\Delta t)$ distribution and satisfies $|\mathbb{E}[\widetilde{W}]| + |\mathbb{E}[\widetilde{W}^3]|+|\mathbb{E}[\widetilde{W}^2]-\Delta t|\leq K\Delta t^2$ sufficient for weak order $1$ of convergence \cite{Kloeden_Platen_1992}. 
\end{example}

\begin{example}
Also found in \cite{Kloeden_Platen_1992} the $P$ dimensional random variable defined component-wise as a three-point random variable \begin{align}
    \mathbb{P} \left(\Delta \widetilde{W}^p  =\pm \sqrt{3 \Delta t}  \right) = 1/6,\quad \mathbb{P}\left(\Delta \widetilde{W}^p=0\right)=2/3,
\end{align}\label{eq:3point_rv}
is a distribution with the same first 5 moments as that of the normal $N(0,\Delta t)$ distribution and satisfies a similar moment estimate  $|\mathbb{E}[\widetilde{W}]| + |\mathbb{E}[\widetilde{W}^2]-\Delta t| + |\mathbb{E}[\widetilde{W}^3]|+ |\mathbb{E}[\widetilde{W}^4]-3\Delta t|+|\mathbb{E}[\widetilde{W}^5]|\leq K\Delta t^3$. 
\end{example}

Whilst one can retain the weak order of convergence (sufficient for most applications) provided high enough moments are captured and the drift and diffusion are sufficiently differentiable \cite{Kloeden_Platen_1992}, 
one can lose the strong order of convergence or the mean square order of convergence by sampling from such distributions.
The remainder of this section discusses strong and mean square convergence when cutting the tails of the normal distribution and can be avoided for those who wish to have weak convergence. The mean square order of convergence is defined below in \cref{def:mean square convergence}. 

\begin{definition}[Mean square convergence]\label{def:mean square convergence}
The output of a numerical method $\b q^n$ converges in the mean-square sense with order $p>0$ to the exact solution $\b q(t^n)$ of an SDE at time $t^n$ if there exists a constant $L>0$, and a critical timestep $\tau_c>0$ such that for each $\left.\Delta t \in( 0, \tau_c\right]$, on has 
\begin{align}
\left(\mathbb{E}\left[\left\|\b q^n -\b q(t_n)\right\|_2^2\right]\right)^{1 / 2} \leq L \Delta t^p.
\end{align}

\end{definition}
Strong convergence is defined below.
\begin{definition}[Strong global convergence]\label{def:strong convergence} Let $\b q^n$ denote the numerical approximation of the stochastic process $\b q$ at time $t^n$, after $n$ steps of equal step-size $\Delta t$. We say that the numerical method is converging with strong global order $p$ if $\exists \tau_c>0, L>0$, s.t $
\mathbb{E}[ ||\b q^{n}-\b q(t^n)||_2]\leq L\Delta t^{p}, \quad \forall \Delta t \in (0,\tau_c).$
\end{definition}

% \begin{definition}[Weak global convergence order $p$]\label{def:weak convergence}
% Let $\b q_n$ denote the numerical approximation of the stochastic process $\b q$ at time $t_n$, after $n$ steps of equal step-size $\Delta t$. We say that the numerical method is converging with weak global order $p>0$, if $\forall \Phi \in C^{2(p+1)}_P(\mathbb{R}^d,\mathbb{R})$, $\exists \delta>0, K>0$, s.t 
% \begin{align}
% |\mathbb{E}[\Phi(q_n)]-\mathbb{E}[\Phi(q(t_n)]|\leq K\Delta t^{p}, \quad \forall \Delta t \in (0,\delta).
% \end{align}
% Where $C^{l}_P(\mathbb{R}^d,\mathbb{R})$ denotes the set of $l$ times differentiable functions $\Phi:\mathbb{R}^d\rightarrow\mathbb{R}$, whose $l$th order partial derivatives and below have polynomial growth.
% \end{definition}
Mean square convergence implies strong convergence under Lyapunov's inequality. To retain the mean square (strong) order of the scheme whilst using bounded increments, one can truncate the tails of the normal distribution. More specifically in \cite{milstein2002numerical} the increments in \cref{eq:bounded increments} were proven to converge to the Itô system when using a Backward Euler discretisation. As is common, the increments $\Delta W\sim N(0,\Delta t)$ can be written in terms of the standard normal distribution $\Delta Z \sim N(0,1)$, by rescaling $\Delta W = \sqrt{\Delta t}\Delta Z$. Similarly, the bounded increments in \cref{eq:bounded increments} can be rescaled as $\Delta \widetilde{W} = \sqrt{\Delta t}\Delta \widetilde{Z}$.

\begin{definition}
[Milstein-Tretyakov Bounded normal increments]\label{Milstein-Tretyakov Bounded normal increments} Given $\Delta Z\sim N(0,1)$ and $\Delta t \in \mathbb{R}^{>0}$, Milstein and Tretyakov \cite{milstein2004stochastic} define a symmetric bounded increment $\Delta \widetilde{Z}_{\Delta t}$, from the following random variable, 
 \begin{align}
\Delta \widetilde{Z}_{\Delta t}:=\left\{\begin{array}{c}
\Delta Z,\quad |\Delta Z| \leq A_{\Delta t}, \\
A_{\Delta t}, \quad \Delta Z>A_{\Delta t},\\
-A_{\Delta t}, \quad \Delta Z<-A_{\Delta t}.
\end{array}\right.
\label{eq:bounded increments}
\end{align}
where $\Delta \widetilde{Z}_{\Delta t}$ is bounded by $A_{\Delta t}:=\sqrt{2 k|\ln {\Delta t}|}, k \geq 1$. 
\end{definition}
These increments satisfy $(\mathbb{E}[\Delta \widetilde{Z}_{\Delta t}] = \mathbb{E}[\Delta Z_{\Delta t}] = 0)$ and the following inequalities
\begin{align}
 0 \leq \mathbb{E}\left[(\Delta Z-\Delta \widetilde{Z} )^2\right] &\leq  {\Delta t}^k, \quad \mathbb{E}\left[(\Delta Z)^2\right] - \mathbb{E}\left[(\Delta \widetilde{Z})^2\right] \leq \left(1 + \frac{4}{\sqrt{\pi}}\sqrt{k|\ln {\Delta t}|}\right)\Delta t^k \label{eq:milstein inequality},\\
  \mathbb{E}[(\Delta Z^2 - \Delta \widetilde{Z}^2)(\Delta Z - \Delta \widetilde{Z})] & = 0.\label{eq:additional inequality}
\end{align}
The first two inequalities are established in \cite{milstein2002numerical} and are elaborated upon in \cref{sec:inequalities}, and all three are required later in this paper.

When using increments sampled from the normal distribution, the Euler Maruyama scheme \cref{method:EM}, is not convergent to the Stratonovich system \cref{stratonovich}, it converges to the Itô system 
(\cref{eq:Ito}) with mean square order 1/2. When using increments sampled from the normal distribution, the SARK method (\cref{method: Stochastic Additive Runge-Kutta}) converges to the following SDE
\begin{align}
    d \b q = \left( \lambda_0 \b f(\b q) + \lambda_1  DG|_{\b q} G(\b q) \right)dt + \lambda_2 G(\b q)  d \b W, \quad \lambda_1 = \sum_{j=1}^{s} \tilde{b}_j\left( \sum_{k=1}^{s} \tilde{a}_{jk} \right), \quad \lambda_0 = \sum_{j=1}^{s} b_j, \quad \lambda_2 = \sum_{j=1}^{s} \tilde{b}_j.
    \label{eq:Rumelin SDE}
\end{align}
This allows the SARK \cref{method: Stochastic Additive Runge-Kutta} convergence to either Itô \cref{eq:Ito} or Stratonovich \cref{stratonovich} upon appropriate Butcher tableau choices, and we shall refer to the SDE specified by the butcher tableau as the ``Ruemelin-SDE". The order one deterministic ARK scheme conditions $b^T e = \tilde{b}^Te = 1 = \lambda_2=\lambda_1$, are required to capture $\b f, G d\b W$ terms. The condition $\tilde{b}^T\tilde{A}e = 1/2 = \lambda_1$, allows convergence to the Stratonovich equation by attaining the Itô-Stratonovich correction through numerical approximation, whereas the condition $\tilde{b}^T\tilde{A}e = 0 = \lambda_1$, allows convergence to the Itô equation.

\begin{assumption}[MS convergence]
Let $\b q^{n+1}$, be the solution to the  Stochastic Additive Runge-Kutta \cref{method: Stochastic Additive Runge-Kutta} from the initial condition $\b q^{n}$, let $\b q(t^{n+1})$, be the solution to the Rumelin SDE \cref{eq:Rumelin SDE} from the initial condition $\b q^{n}$, using the normal increments $\Delta \b W$. We shall assume the following mean square local error estimates 
\begin{align}
||\mathbb{E}\left[\b q(t^{n+1}) - \b q^{n+1}\right]||_{2} &= \mathcal{O}(\Delta t^{p_1}), \label{eq:assumption 1}\\
\left(\mathbb{E}\left[||\b q(t^{n+1}) - \b q^{n+1}||_2^2\right]\right)^{1/2} &= \mathcal{O}(\Delta t^{p_2})\label{eq:assumption 2},
\end{align}
$p_1\geq p_2+1/2$, $p_2\geq 1$, where we shall for convenience take $p_2 = 1$. 
\end{assumption}
 These local estimate need not be assumed but may be proven by Taylor expanding the numerical method and taking the difference from a Stratonovich Taylor (Wagner-Platen) expansion. These local error estimates are commonly assumed in light of the fundamental theorem of mean square convergence \cite{milstein2002numerical}, required for the mean square order $1/2$ of the numerical scheme described below. 

\begin{theorem}[G. N. Milstein, M. V. Tretyakov, Fundamental theorem on mean square convergence \cite{milstein2004stochastic}]\label{thm:fundamental-thm-msc}
Suppose an approximation $\b q^{n+1}$ and the exact solution $\b q(t^{n+1})$ both starting from the arbitrary initial condition $\b q^n\in \mathbb{R}^d$  satisfy the following local error estimates, 
\begin{align}
|| \mathbb{E}[\b q(t^{n+1})-\b q^{n+1}]||_2 \leq L\left(1+||\b q^n||_2^2\right)^{1 / 2} \Delta t^{p_1}, \\
{\left[\mathbb{E}[||\b q(t^{n+1})-\b q^{n+1}||_2^2]\right]^{1 / 2} \leq L\left(1+||\b q^n||_2^2\right)^{1 / 2} \Delta t^{p_2} .}
\end{align}
Where $L$ denotes an arbitrary constant, not necessarily the same in each equation. 
 % and it also holds for arbitrary $t^{n+\gamma\Delta t} = t^{n}+\gamma \Delta t,$ $\gamma \in[0,1]$.
 If both
$$
p_1 \geq p_2+\frac{1}{2},\quad p_2 \geq \frac{1}{2}.
$$
Then for the entire time inteval $n=0,1, \ldots, N$ the following inequality holds:
\begin{align}
\left[\mathbb{E}[||\b q\left(t^n\right)-\b q\left(t^n\right)||_2^2]\right]^{1 / 2} \leq L\left(1+\mathbb{E}[||\b q_0 ||_2^2]\right)^{1 / 2} \Delta t^{p_2-1 / 2},
\end{align}
i.e. the global mean square order of accuracy of the method is $p=p_2-1 / 2$.
\end{theorem}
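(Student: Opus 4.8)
The plan is to run the classical one-step error-propagation argument together with a discrete Grönwall inequality. Write $\b e^n := \b q^n - \b q(t^n)$ for the global error; since $\b q^n$ is built from the driving increments up to $t^n$, $\b e^n$ is $\mathcal{F}_{t^n}$-measurable. Let $\bar{\b q}^{n+1}$ denote the exact solution of the Rumelin SDE on $[t^n,t^{n+1}]$ issued from the (random) datum $\b q^n$ and driven by the same Wiener increments as the numerical step, and use the flow property to regard $\b q(t^{n+1})$ as the exact solution issued from $\b q(t^n)$ over the same path. Then
\begin{align*}
\b e^{n+1}= \underbrace{\bigl(\b q^{n+1}-\bar{\b q}^{n+1}\bigr)}_{=:\,\b a}+\underbrace{\bigl(\bar{\b q}^{n+1}-\b q(t^{n+1})\bigr)}_{=:\,\b b},\qquad \|\b e^{n+1}\|_2^2=\|\b a\|_2^2+2\langle\b a,\b b\rangle+\|\b b\|_2^2 .
\end{align*}
Applied conditionally on $\mathcal{F}_{t^n}$ (where $\b q^n$ is deterministic and the fresh increments are independent of $\mathcal{F}_{t^n}$), the two hypotheses read $\|\mathbb{E}[\b a\mid\mathcal{F}_{t^n}]\|_2\le L(1+\|\b q^n\|_2^2)^{1/2}\Delta t^{p_1}$ and $\mathbb{E}[\|\b a\|_2^2\mid\mathcal{F}_{t^n}]^{1/2}\le L(1+\|\b q^n\|_2^2)^{1/2}\Delta t^{p_2}$.

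For the propagated term I would use standard perturbation estimates for the exact flow, which follow from the Lipschitz and linear-growth conditions of \cref{sec:well posedness estimates}: writing $\b b=\b e^n+\b r$ with $\b r$ the difference of the two exact increments over $[t^n,t^{n+1}]$, Itô isometry plus Grönwall for the exact solution give $\|\mathbb{E}[\b r\mid\mathcal{F}_{t^n}]\|_2\le C\Delta t\,\|\b e^n\|_2$ and $\mathbb{E}[\|\b r\|_2^2\mid\mathcal{F}_{t^n}]\le C\Delta t\,\|\b e^n\|_2^2$, whence $\|\mathbb{E}[\b b\mid\mathcal{F}_{t^n}]\|_2\le(1+C\Delta t)\|\b e^n\|_2$ and $\mathbb{E}[\|\b b\|_2^2\mid\mathcal{F}_{t^n}]\le(1+C\Delta t)\|\b e^n\|_2^2$. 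The crux is the cross term. Decompose $\b a=\bar{\b a}+\b a^0$ and $\b b=\bar{\b b}+\b b^0$ into $\mathcal{F}_{t^n}$-conditional means and zero-mean fluctuations; the mixed products of a mean with a fluctuation vanish under $\mathbb{E}[\,\cdot\mid\mathcal{F}_{t^n}]$, so $\mathbb{E}[\langle\b a,\b b\rangle\mid\mathcal{F}_{t^n}]=\langle\bar{\b a},\bar{\b b}\rangle+\mathbb{E}[\langle\b a^0,\b b^0\rangle\mid\mathcal{F}_{t^n}]$. The first summand is $\le L(1+\|\b q^n\|_2^2)^{1/2}\Delta t^{p_1}(1+C\Delta t)\|\b e^n\|_2$ by the $p_1$ estimate; for the second, Cauchy--Schwarz together with $\b b^0=\b r-\mathbb{E}[\b r\mid\mathcal{F}_{t^n}]$ and the $\b r$-estimate gives $\le L(1+\|\b q^n\|_2^2)^{1/2}\Delta t^{p_2}\cdot C^{1/2}\Delta t^{1/2}\|\b e^n\|_2$. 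Squeezing that extra half power of $\Delta t$ out of the \emph{fluctuation} of the propagated error — so that no surviving cross-term is merely $O(\Delta t^{p_2}\|\b e^n\|_2)$ — is the step I expect to be the main obstacle, since such a term would spoil the Grönwall balance.

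Collecting terms and applying Young's inequality in the form $2uv\le\Delta t^{-1}u^2+\Delta t\,v^2$ with $v=\|\b e^n\|_2$ to each cross-term contribution, and using $p_1\ge p_2+\tfrac12$ to dominate the $\Delta t^{p_1}$ piece by the $\Delta t^{p_2+1/2}$ one, everything consolidates into
\begin{align*}
\mathbb{E}\bigl[\|\b e^{n+1}\|_2^2\mid\mathcal{F}_{t^n}\bigr]\le(1+C_1\Delta t)\,\|\b e^n\|_2^2+C_2\bigl(1+\|\b q^n\|_2^2\bigr)\Delta t^{2p_2}.
\end{align*}
Taking full expectations, bounding $1+\mathbb{E}[\|\b q^n\|_2^2]\le C(1+\mathbb{E}[\|\b q_0\|_2^2])+2\,\mathbb{E}[\|\b e^n\|_2^2]$ via $\|\b q^n\|_2^2\le 2\|\b q(t^n)\|_2^2+2\|\b e^n\|_2^2$ and the moment bound for the exact solution, and using $p_2\ge\tfrac12$ (so $\Delta t^{2p_2}\le\Delta t$) to absorb the term $2\,\mathbb{E}[\|\b e^n\|_2^2]\Delta t^{2p_2}$ into $(1+C_1\Delta t)\mathbb{E}[\|\b e^n\|_2^2]$, one obtains $\mathbb{E}[\|\b e^{n+1}\|_2^2]\le(1+C_1'\Delta t)\mathbb{E}[\|\b e^n\|_2^2]+C_2'(1+\mathbb{E}[\|\b q_0\|_2^2])\Delta t^{2p_2}$. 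Finally, the discrete Grönwall lemma with $\b e^0=\b{0}$ and $N\Delta t$ fixed sums the geometric series to give $\mathbb{E}[\|\b e^n\|_2^2]\le \tfrac{C_2'}{C_1'}e^{C_1'N\Delta t}(1+\mathbb{E}[\|\b q_0\|_2^2])\Delta t^{2p_2-1}$; taking square roots yields the claimed estimate and the global mean-square order $p=p_2-\tfrac12$.
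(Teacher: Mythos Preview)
The paper does not prove this theorem: it is stated and cited from Milstein--Tretyakov \cite{milstein2004stochastic} and then used as a black box in \cref{convergence lemma} and \cref{lem: mse}. So there is no ``paper's own proof'' to compare against.

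That said, your sketch is essentially the classical Milstein--Tretyakov argument. The decomposition $\b e^{n+1}=\b a+\b b$ into local truncation and propagated error, the mean--fluctuation splitting of the cross term $\mathbb{E}[\langle\b a,\b b\rangle\mid\mathcal{F}_{t^n}]=\langle\bar{\b a},\bar{\b b}\rangle+\mathbb{E}[\langle\b a^0,\b b^0\rangle\mid\mathcal{F}_{t^n}]$, and the observation that $\b b^0$ carries an extra $\Delta t^{1/2}$ (so that the surviving cross contribution is $O(\Delta t^{p_2+1/2}\|\b e^n\|_2)$ rather than $O(\Delta t^{p_2}\|\b e^n\|_2)$) is exactly the mechanism behind the hypothesis $p_1\ge p_2+\tfrac12$, and you identify this correctly as the crux. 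The Young-plus-discrete-Gr\"onwall closure is standard and your use of $p_2\ge\tfrac12$ to absorb the $\mathbb{E}[\|\b e^n\|_2^2]\Delta t^{2p_2}$ term is the right place for that hypothesis. The only step worth tightening is the control of the $(1+\|\b q^n\|_2^2)$ prefactor: the route you take, bounding it via $\|\b q^n\|_2^2\le 2\|\b q(t^n)\|_2^2+2\|\b e^n\|_2^2$ and the exact-solution moment bound, is valid but circular in appearance until one notes that the resulting $\mathbb{E}[\|\b e^n\|_2^2]\Delta t^{2p_2}$ feeds back into the Gr\"onwall coefficient rather than the source term; you handle this, but it deserves a sentence of justification in a full write-up.
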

\begin{remark}
Since in this work we do not consider approximating higher-order stochastic integrals it is sufficient to show \begin{align}
    p_1\geq 3/2, \quad p_2\geq 1.
\end{align}
to prove global mean square convergence order $1/2$.
\end{remark}

We wish to prove convergence of the SRK, SARK, and SGARK, schemes when using bounded increments in \cref{eq: difference increments} and wish to use \cref{thm:fundamental-thm-msc}. To do so we use \cref{convergence lemma} below introduced in \cite{milstein2002numerical} where if one adds and subtracts the unbounded increment system, one can require local error estimates between the bounded and unbounded increment-driven system sufficient to prove convergence of the bounded increment-driven system to the exact solution of the SDE. 

\begin{lemma}[Convergence \cite{milstein2002numerical}]\label{convergence lemma}
Let $\b q(t^{n+1})$, be the analytic solution to the Rumelin SDE \cref{eq:Rumelin SDE} from the initial condition $\b q^{n}$. Let $\b q^{n+1}$, be the solution to the  Stochastic Additive Runge-Kutta \cref{method: Stochastic Additive Runge-Kutta} from the initial condition $\b q^{n}$, using a $P$-dimensional normally distributed variable $\Delta Z^p\sim N(0,1)$. Let $\tilde{\b q}^{n+1}$, be the solution to the  Stochastic Additive Runge-Kutta \cref{method: Stochastic Additive Runge-Kutta} from the initial condition $\b q^{n}$, using the bounded increments $\Delta \widetilde{Z}$ in \cref{eq:bounded increments}. 
Then the following conditions bounding the difference between the two numerical methods
\begin{align} \mathbb{E}[\b q^{n+1}-\tilde{\b q}^{n+1}] &=\mathcal{O}(\Delta t^{p_1})\\
\left(\mathbb{E}[||\b q^{n+1}-\tilde{\b q}^{n+1}||_2^2] \right)^{1/2}&=\mathcal{O}(\Delta t^{p_2}).
\end{align}
where $p_1\geq p_2+1/2$, $p_2\geq 1/2$,
are sufficient (by \cref{thm:fundamental-thm-msc}), to establish the bounded increment-driven system is also convergent with mean square order $p_2-1/2$. 
\end{lemma}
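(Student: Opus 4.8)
The plan is to reduce the claim to a single application of the fundamental theorem of mean square convergence (\cref{thm:fundamental-thm-msc}), inserting the unbounded-increment numerical step as an intermediate comparison point. Write $\b q(t^{n+1})$ for the exact Rumelin-SDE flow from $\b q^n$, $\b q^{n+1}$ for the SARK step from $\b q^n$ driven by $\Delta Z\sim N(0,1)$, and $\tilde{\b q}^{n+1}$ for the SARK step from $\b q^n$ driven by the truncated increments $\Delta\widetilde Z$. First I would split
\begin{align}
\b q(t^{n+1}) - \tilde{\b q}^{n+1} = \bigl(\b q(t^{n+1}) - \b q^{n+1}\bigr) + \bigl(\b q^{n+1} - \tilde{\b q}^{n+1}\bigr),
\end{align}
take expectations, and apply the triangle inequality on $\mathbb{R}^d$ together with Minkowski's inequality in $L^2(\Omega)$ to get
\begin{align}
\|\mathbb{E}[\b q(t^{n+1})-\tilde{\b q}^{n+1}]\|_2 &\le \|\mathbb{E}[\b q(t^{n+1})-\b q^{n+1}]\|_2 + \|\mathbb{E}[\b q^{n+1}-\tilde{\b q}^{n+1}]\|_2,\\
\bigl(\mathbb{E}\|\b q(t^{n+1})-\tilde{\b q}^{n+1}\|_2^2\bigr)^{1/2} &\le \bigl(\mathbb{E}\|\b q(t^{n+1})-\b q^{n+1}\|_2^2\bigr)^{1/2} + \bigl(\mathbb{E}\|\b q^{n+1}-\tilde{\b q}^{n+1}\|_2^2\bigr)^{1/2}.
\end{align}

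Next I would control the four terms on the right. The two comparing $\b q(t^{n+1})$ with $\b q^{n+1}$ are $\mathcal{O}(\Delta t^{p_1})$ and $\mathcal{O}(\Delta t^{p_2})$ respectively by the assumed mean square local error estimates for the unbounded scheme (\cref{eq:assumption 1}, \cref{eq:assumption 2}); the two comparing $\b q^{n+1}$ with $\tilde{\b q}^{n+1}$ are $\mathcal{O}(\Delta t^{p_1})$ and $\mathcal{O}(\Delta t^{p_2})$ by the hypothesis of the lemma (this is where the moment identities \cref{eq:milstein inequality}, \cref{eq:additional inequality} for the truncated increments would enter if one verified the hypothesis directly). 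Adding, the pair $(\b q(t^{n+1}),\tilde{\b q}^{n+1})$ satisfies local error estimates with exponents $\bar p_1 := \min\{p_1^{\mathrm{assm}},p_1^{\mathrm{hyp}}\}$ and $\bar p_2 := \min\{p_2^{\mathrm{assm}},p_2^{\mathrm{hyp}}\}$; since each contributing pair satisfies $p_1\ge p_2+1/2$ and $p_2\ge 1/2$, so does $(\bar p_1,\bar p_2)$, because $p_1^{\mathrm{assm}}\ge p_2^{\mathrm{assm}}+1/2\ge\bar p_2+1/2$ and likewise for the $\mathrm{hyp}$ pair. Therefore \cref{thm:fundamental-thm-msc} applies to $\tilde{\b q}^{n+1}$ against $\b q(t^{n+1})$ and yields global mean square convergence of order $\bar p_2-1/2 = p_2-1/2$.

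The one point requiring care --- and the main, if modest, obstacle --- is the structure of the constants. \Cref{thm:fundamental-thm-msc} demands the local estimates in the normalised form $L(1+\|\b q^n\|_2^2)^{1/2}\Delta t^{p}$, so every $\mathcal{O}(\cdot)$ above must be read as carrying that same polynomial growth in $\|\b q^n\|_2$, and I would check that summing two such bounds --- and then, inside the fundamental theorem, passing from the per-step constant to the global constant $L(1+\mathbb{E}\|\b q_0\|_2^2)^{1/2}$ --- preserves this form. No new analytic estimate is needed beyond this bookkeeping; the substantive work, namely the Taylor / Wagner--Platen expansion of the scheme against the exact flow and the moment bounds on $\Delta Z-\Delta\widetilde Z$, has been deliberately isolated into the MS-convergence assumption and into the separately verified hypothesis of the lemma.
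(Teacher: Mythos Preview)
Your proposal is correct and follows essentially the same route as the paper: both insert $\b q^{n+1}$ as an intermediate point, invoke the assumed local estimates \cref{eq:assumption 1}--\cref{eq:assumption 2} for one half and the lemma's hypothesis for the other, and then apply \cref{thm:fundamental-thm-msc}. The only cosmetic difference is that you bound the $L^2(\Omega)$ term directly via Minkowski, whereas the paper expands $\|a+b\|_2^2$ and controls the cross term with Cauchy--Schwarz; your way is slightly cleaner but equivalent.
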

\begin{proof}
Established in \cite{milstein2002numerical} it is sufficient by the fundamental theorem on mean square convergence that the following identities are satisfied,
\begin{align}
\mathbb{E}\left[\b q(t^{n+1})  - \tilde{\b q}^{n+1}\right]  &= \mathbb{E}\left[\b q(t^{n+1}) - \b q^{n+1}+\b q^{n+1} - \tilde{\b q}^{n+1}\right] = \mathcal{O}(\Delta t^{p_1})\label{eq:condition 1},\\
\left(\mathbb{E}\left[||\b q(t^{n+1})  - \tilde{\b q}^{n+1}||^2_2\right]\right)^{1/2} & = \left(\mathbb{E}\left[||\b q(t^{n+1}) - \b q^{n+1} + \b q^{n+1} - \tilde{\b q}^{n+1}||^2_2\right]\right)^{1/2} = \mathcal{O}(\Delta t^{p_2})\label{eq:condition 2},
\end{align}
for $p_1\geq p_2+1/2$, $p_2\geq 1/2$, for order $p_2-1/2$ convergence. It has been assumed that the numerical method for unbounded increments already satisfies \cref{eq:assumption 1,eq:assumption 2}. Therefore the following equalities  
\begin{align}
\mathbb{E}[\b q^{n+1}-\tilde{\b q}^{n+1}] &=\mathcal{O}(\Delta t^{p_1}),\\
\left(\mathbb{E}[||\b q^{n+1}-\tilde{\b q}^{n+1}||_2^2]+2\mathbb{E}[(\b q^{n+1}-\tilde{\b q}^{n+1})^T(\b q(t^{n+1})-\b q^{n+1})] \right)^{1/2}&=\mathcal{O}(\Delta t^{p_2}),
\end{align}
are sufficient for order $p_2-1/2$. This further simplifies noting $\mathbb{E}[\b a^T \b b]\leq \mathbb{E}[|\b a^T \b b|]\leq \mathbb{E}[||\b a||_2^2]^{1/2} \mathbb{E}||\b b||_{2}^2]^{1/2}$, to \begin{align}
\mathbb{E}[\b q^{n+1}-\tilde{\b q}^{n+1}] &=\mathcal{O}(\Delta t^{p_1}),\\
\left(\mathbb{E}[||\b q^{n+1}-\tilde{\b q}^{n+1}||^2_2] \right)^{1/2}&=\mathcal{O}(\Delta t^{p_2}).
\end{align}

\end{proof}

% Wait until needed!
% Furthermore, a modification of the arguments in \cite{milstein2002numerical,milstein2004stochastic}, can be used for the following additional inequalities on the higher-order even moments,  
% \begin{align}
% \mathbb{E}[(\Delta Z - \Delta \tilde{Z})^m] = \frac{2}{\sqrt{2\pi}}\int_{0}^{\infty} y^m e^{-y^2/2 - Ay}dy e^{-A^2/2} \leq 2^{-m+1}\frac{m!}{(m/2)!}e^{-A^2/2} = \frac{m!}{2^{m-1}(m/2)!}\Delta t^k, \quad m \quad \text{even}\label{eq:moment bounds}
% \end{align} 
% The difference in the moments from the original can also be shown to be  
% \begin{align}
% \mathbb{E}[(\Delta Z)^m] - \mathbb{E}[(\Delta \widetilde{Z})^m] \leq O(\sqrt{\ln(\Delta t)}\Delta t^k)
% \end{align}
% similarly bounded as presented in the appendix. 

% \begin{align}
%   \Delta \widetilde{Z}_{\Delta t} \leq A_{\Delta t},\quad  0 \leq \mathbb{E}\left(\Delta Z^2-\Delta \widetilde{Z}_{\Delta t}^2\right) = 1 - \mathbb{E}[\Delta \widetilde{Z}_{\Delta t}^2]\leq(1+2 \sqrt{2 k|\ln {\Delta t}|}) {\Delta t}^k. \label{eq:milstein inequality}
% \end{align}

% Then, by using the fundamental theorem of the mean square convergence theorem and the above lemma, Milstein and Tretyakov \cite{milstein2002numerical} show that a BE scheme driven by $\Delta Z$ assumed to converge to the Itô system \cref{eq:Ito} with order $p$, and the same scheme driven by the $\widetilde{\Delta Z}$ increments has a difference estimate sufficient to conclude the system-driven by $\widetilde{\Delta Z}$, also converges with order $p$ provided, $A_k$ is chosen with $k\geq 1$. 

Using \cref{convergence lemma} we show the EM scheme converges to the Itô system \cref{eq:Ito} when using bounded increments $ \Delta\widetilde{\b Z}$ in \cref{eq:bounded increments} rather than $\Delta \b Z$ normal ones. 

\begin{example}[\cite{milstein2004stochastic}]Let $\b q^{(1)}:= \b q + \Delta t f(\b q) + t^{1/2} G(\b q) \Delta \b Z$, be a solution of one step EM scheme, converging with strong order 1/2 to the Itô system \cref{eq:Ito}. Let $\b q^{(2)}:= \b q + \Delta t f(\b q) + t^{1/2} G(\b q) \Delta  \widetilde{\b Z}$, be a solution of the one step bounded increment driven system \cref{eq:bounded increments}. Then the difference between the two solutions from the same initial condition $\b q$ satisfies
\begin{align}
   \b q^{(1)}-\b q^{(2)} = t^{1/2}G(\b q)[\Delta \b Z-\Delta  \widetilde{\b Z}],
\end{align}
after one time step.
Through the centered properties of the increments $(\b Z,\widetilde{\b Z})$ we have the first local error estimate $||\mathbb{E}\left[\b q^1 - \b q^2 \right]||_{2}= 0$, and by the properties of matrix norms and \cref{eq:milstein inequality} one has, 
\begin{align}
\mathbb{E}\left[||\b q^1 - \b q^2||_2^2 \right]\leq t ||G(\b q)||^2_2 \mathbb{E}\left( ||\Delta \b Z - \Delta \widetilde{\b Z}||_2^2\right) = t ||G(\b q)||^2_2 \sum_{p=1}^{P} \mathbb{E}\left[ (\Delta  Z^p - \Delta\widetilde{  Z}^p)^2\right] \leq P||G||_{2}^2 \Delta t^{k+1}.
\end{align}%(1+\sqrt{2k|\ln{\Delta t}|}))
Then by the growth bound assumption required for well-posedness of the SDE in \cref{sec:well posedness estimates}, one has 
\begin{align}
\left(\mathbb{E}\left[||\b q^1 - \b q^2||_2^2 \right] \right)^{1/2}\leq \sqrt{PL}\left( 1+ ||\b q||_{2}^2\right)^{1/2}\Delta t^{(k+1)/2}
\end{align}%\sqrt{(1+\sqrt{2k|\ln{\Delta t}|})}
% In \cite{milstein2002numerical,milstein2004stochastic} it is argued that this is sufficient for the following bound (in terms of an arbitrary constant $L_2$)
% \begin{align}
% \left(\mathbb{E}\left[||\b q^1 - \b q^2||^2 \right] \right)^{1/2} \leq L_2\left( 1+ ||\b q^n||_{2}^2\right)^{1/2}\Delta t^{(k+1)/2}.     
% \end{align}
% This is not formally true, as can be shown through a contradiction argument there exists $\Delta t$, small enough that this cannot be established. Instead one can establish the following bound, 
% \begin{align}
% \left(\mathbb{E}\left[||\b q^1 - \b q^2||^2 \right] \right)^{1/2}\leq L_2\left( 1+ ||\b q^n||_{2}^2\right)^{1/2}\Delta t^{(k+1)/2 -\epsilon}, \quad \epsilon >0.
% \end{align}
% These conditions are sufficient for mean square convergence order $1/2$ when $k\geq 1-2\epsilon$, by the FTMSC as we have $p_1=\infty$, and $p_2 = (k+1)/2 -\epsilon$ such that taking epsilon small gives 
% $k>1$ is sufficient for order 1/2.
These local error estimates are sufficient when $k\geq 1$, by the FTMSC \cref{thm:fundamental-thm-msc} and \cref{convergence lemma} with $p_1=\infty$, and $p_2 = (k+1)/2$ for mean square convergence order $1/2$ of the bounded increment driven system. 
\end{example}

% Milstein and Tretyakov \cite{milstein2002numerical,milstein2004stochastic} state that using similar arguments to the one above one can make the same claim for Stochastic Runge-Kutta schemes similar to \cref{method: Stochastic Additive Runge-Kutta}. However, a detailed proof involving regularity conditions is not provided, below we fill in some of these details regarding the \cref{method: Stochastic Additive Runge-Kutta}.

Using \cref{convergence lemma,sec:taylors-theorem,sec:inequalities}, we similarly establish \cref{method: Stochastic Additive Runge-Kutta} when driven by bounded increments as in \cref{Milstein-Tretyakov Bounded normal increments} one attains the same mean square convergence to the Rumelin-SDE as when using normally distributed increments. A similar statement is made in \cite{milstein2002numerical,milstein2004stochastic} however, a detailed proof involving regularity conditions is not provided.

\begin{theorem}[MS Convergence of bounded increment SARK \cref{method: Stochastic Additive Runge-Kutta}.]\label{lem: mse}
The difference between the one step bounded increment ($\Delta \widetilde{\b Z}$) system $\tilde{\b q}^{n+1}$, and the one step normally distributed driven ($\Delta \b Z$) system $\b q^{n+1}$, both starting from the same initial condition $\b q = \b q^n$ and using the same ARK \cref{method: Stochastic Additive Runge-Kutta}, satisfy the following local error estimates
\begin{align}
\mathbb{E}[\b q^{n+1}-\tilde{\b q}^{n+1}] = \mathcal{O}(\Delta t^{\min \lbrace 3/2,k+1-\epsilon \rbrace}),\quad (\mathbb{E}[ || \b q^{n+1}-\tilde{\b q}^{n+1}||_2^2])^{1/2}=\mathcal{O}(\Delta t^{\min \lbrace 1,(k+1)/2 \rbrace}),
\end{align}
where $\epsilon$ is a small positive number.
Provided that the Butcher tableau's $((A,b),(\tilde{A},\tilde{b}))=((a_{i,j},b_j),(\tilde{a}_{i,j},\tilde{b}_j))$ are componentwise bounded, the drift and diffusion are componentwise twice continuously differentiable $f^{k} \in C^{2}(\mathbb{R}^d;\mathbb{R})$, $g^{k}_p \in C^{2}(\mathbb{R}^d; \mathbb{R})$,  $\forall p\in \lbrace 1,...,P\rbrace$, $\forall k \in \lbrace 1,...,d\rbrace$, and $DG|_{\b q}$, $G(\b q)$, $D\b f|_{\b q}$, $\b f(\b q)$ are bounded. The above local error estimates in combination with \cref{thm:fundamental-thm-msc,convergence lemma} are sufficient, to prove mean square convergence order $1/2$ of the SARK \cref{method: Stochastic Additive Runge-Kutta} (to the Rumelin-SDE) when using the bounded increments in \cref{Milstein-Tretyakov Bounded normal increments} with $k\geq 1$. 
\end{theorem}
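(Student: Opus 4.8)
The plan is to derive the two stated local error estimates by Taylor-expanding the one-step update of \cref{method: Stochastic Additive Runge-Kutta} about the common initial state $\b q=\b q^n$ --- treating $\Delta\b S=\Delta t^{1/2}\Delta\b Z$ (respectively $\Delta t^{1/2}\Delta\widetilde{\b Z}$) as the small quantity --- subtracting the two expansions, and then estimating the resulting expectations with the moment identities \cref{eq:milstein inequality,eq:additional inequality} together with the symmetry and independence of the increments; the global mean-square order $1/2$ then follows by feeding these estimates into \cref{thm:fundamental-thm-msc,convergence lemma}.

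First I would control the internal stages. Subtracting the stage equations of the two systems and using that $\b f,G,D\b f,DG$ are bounded (hence $\b f,G$ are Lipschitz on the relevant region, and the explicit stages obey $\|\b k^i-\b q\|\le C(\Delta t+\|\Delta\b S\|)$), one obtains for an explicit tableau a recursion of the shape $\|\b k^i-\tilde{\b k}^i\|\le C(\Delta t+\|\Delta\widetilde{\b S}\|)\max_j\|\b k^j-\tilde{\b k}^j\|+C\|\Delta\b S-\Delta\widetilde{\b S}\|$; since $\|\Delta\widetilde{\b S}\|\le\sqrt{2Pk\,\Delta t|\ln\Delta t|}\to 0$ deterministically, the first term is absorbed for small $\Delta t$ and $\max_i\|\b k^i-\tilde{\b k}^i\|\le C\,\Delta t^{1/2}\|\Delta\b Z-\Delta\widetilde{\b Z}\|$. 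Substituting into the final-stage formula and isolating the two leading stochastic contributions by one-term Taylor expansions of $\b f(\b k^i)$ and $G(\b k^i)$ about $\b q$ with $C^2$-controlled remainders yields
\begin{align}
\b q^{n+1}-\tilde{\b q}^{n+1}&=\lambda_2\,\Delta t^{1/2}G(\b q)\big(\Delta\b Z-\Delta\widetilde{\b Z}\big)+\lambda_1\,\Delta t\sum_{p,r}D\b g_p|_{\b q}[\b g_r(\b q)]\big(\Delta Z^p\Delta Z^r-\Delta\widetilde Z^p\Delta\widetilde Z^r\big)+\b E,
\end{align}
with $\lambda_1,\lambda_2$ as in \cref{eq:Rumelin SDE}, where $\b E$ is a sum of products of uniformly bounded coefficients with $\|\Delta\b Z-\Delta\widetilde{\b Z}\|$ carrying an extra factor $\Delta t$, $\Delta t^{1/2}\|\Delta\widetilde{\b Z}\|$, or $\Delta t^{1/2}\|\Delta\b Z\|$; in particular every drift contribution cancels up to such a remainder, since $\b f$ enters only through the $\mathcal{O}(\Delta t)$ part of the stages.

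Next I would take expectations. For the mean: the first term vanishes by symmetry, since $\mathbb{E}[\Delta\b Z-\Delta\widetilde{\b Z}]=0$; in the second term the off-diagonal parts ($p\ne r$) vanish by independence and symmetry, while each diagonal part is bounded using $\mathbb{E}[(\Delta Z^p)^2-(\Delta\widetilde Z^p)^2]\le\big(1+\tfrac{4}{\sqrt\pi}\sqrt{k|\ln\Delta t|}\big)\Delta t^k$ from \cref{eq:milstein inequality}, which with the prefactor $\lambda_1\Delta t$ gives $\mathcal{O}(\Delta t^{k+1-\epsilon})$ (the logarithm absorbed into the arbitrarily small $\epsilon$); and $\|\mathbb{E}[\b E]\|\le\mathbb{E}\|\b E\|=\mathcal{O}(\Delta t^{3/2})$ by Cauchy--Schwarz against $(\mathbb{E}\|\Delta\b Z-\Delta\widetilde{\b Z}\|_2^2)^{1/2}\le(P\Delta t^k)^{1/2}$ and the (at worst logarithmically growing) moments of $\Delta\b Z,\Delta\widetilde{\b Z}$. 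This yields the first estimate. For the $L^2$ estimate, expand the square: the leading term contributes $\mathbb{E}\|\lambda_2\Delta t^{1/2}G(\b q)(\Delta\b Z-\Delta\widetilde{\b Z})\|_2^2\le\lambda_2^2\|G(\b q)\|_2^2\,\Delta t\sum_p\mathbb{E}[(\Delta Z^p-\Delta\widetilde Z^p)^2]\le\lambda_2^2\|G(\b q)\|_2^2\,P\,\Delta t^{k+1}$; the cross term between the first and second terms vanishes exactly by \cref{eq:additional inequality} (after killing the off-diagonal pieces by independence and symmetry); and the square of the second term, the square of $\b E$, and the cross terms involving $\b E$ are all $\mathcal{O}(\Delta t^2)$ up to logarithms by the same moment bounds. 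Taking square roots gives the second estimate $\mathcal{O}(\Delta t^{\min\{1,(k+1)/2\}})$, and combining both with \cref{thm:fundamental-thm-msc,convergence lemma} shows that bounded-increment SARK converges to the Rumelin SDE with mean-square order $1/2$ whenever $k\ge1$.

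The step I expect to be the main obstacle is the honest control of $\b E$ under exactly the stated hypotheses ($f^k,g_p^k\in C^2$, with $\b f,G,D\b f,DG$ bounded, tableaux bounded). Two points need care: first, one must keep the stages in a region where the one-term Taylor remainders of $\b f$ and $G$ are uniformly controlled --- immediate for an explicit tableau since the stages stay within $C(\Delta t+\|\Delta\b S\|)$ of $\b q$, but for the unbounded normal increments the contribution from large values of $\|\Delta\b Z\|$ must be absorbed using Gaussian tail bounds; second, one must track how the logarithmic growth $A_{\Delta t}=\sqrt{2k|\ln\Delta t|}$ of the bounded increment turns each extra power of $\Delta\widetilde{\b Z}$ into an extra $|\ln\Delta t|^{1/2}$, which is precisely what forces the $\epsilon$ in the exponent $k+1-\epsilon$. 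By contrast, once the expansion is in hand the probabilistic cancellations and the invocations of \cref{eq:milstein inequality,eq:additional inequality} are routine.
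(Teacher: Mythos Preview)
Your proposal is correct and follows essentially the same route as the paper: Taylor-expand the one-step update about $\b q^n$, isolate the two leading stochastic terms $\lambda_2\Delta t^{1/2}G(\b q)(\Delta\b Z-\Delta\widetilde{\b Z})$ and $\lambda_1\Delta t\,DG|_{\b q}G(\b q)(\Delta\b Z\otimes\Delta\b Z-\Delta\widetilde{\b Z}\otimes\Delta\widetilde{\b Z})$, then kill the mean of the first by symmetry, bound the mean of the second via \cref{eq:milstein inequality}, and in the $L^2$ estimate eliminate the cross term with \cref{eq:additional inequality}. The only organisational difference is that you first close a recursion for $\max_i\|\b k^i-\tilde{\b k}^i\|$ using the deterministic smallness of $\|\Delta\widetilde{\b S}\|$, whereas the paper substitutes the stage equations directly into successive Taylor expansions of $G$ and $\b f$; both routes produce the same expansion and the same remainder $\b E=\mathcal{O}(\Delta t^{3/2})$.
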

\begin{proof}\label{proof: mse} \cref{lem: mse}. 
The difference between, the SSPARK \cref{method: Stochastic Additive Runge-Kutta}, with bouned increments $\Delta \widetilde{\b Z}$ \cref{eq:bounded increments} and without bounded increments using normally distributed random variable $\Delta \b Z$, both starting from the initial condition $\b q$, is
\begin{align}
\b q^{n+1} - \tilde{\b q}^{n+1} = \Delta t\sum_{i=1}^{s}b_{i} [\b f(\b k^{i})-\b f(\tilde{\b k^{i}})]
+\sqrt{\Delta t}
\sum_{i=1}^{s} \tilde{b}_{i} 
\left[G(\b k^{i}) \Delta \b Z -  G( \tilde{\b k^{i}})\Delta \widetilde{\b Z}\right], \label{eq: difference increments}
\end{align}
where the substages are
\begin{align}
\b k^{i} &= \b q^{n} + \Delta t \sum_{j=1}^{s}a_{i,j}\b f(\b k^j) + (\Delta t)^{1/2}\sum_{j=1}^{s}\tilde{a}_{i,j}G(\b k^j)\Delta \b Z,\\
\tilde{\b k^{i}} &= \b q^{n} + \Delta t \sum_{j=1}^{s}a_{i,j}\b f(\tilde{\b k^{j}}) + (\Delta t)^{1/2}\sum_{j=1}^{s}\tilde{a}_{i,j}G(\tilde{\b k^{j}})\Delta \widetilde{\b Z}.
\end{align} 
Now Taylor expand $G$, about $\b q^n$. Note that $G : \mathbb{R}^d \rightarrow \mathbb{R}^{d\times P}$, is defined with columns $\b g_p: \mathbb{R}^d \rightarrow \mathbb{R}^d$, such that the derivative of $G$ with respect to $\b q$ evaluated at $\b a$, is denoted 
$DG|_{\b a}$ and can be thought of as a linear map taking values to $\mathbb{R}^{d\times P \times d}$. We require that the $k$-th component of $\b g_p$ is continuous, and twice continuously differentiable within an open ball centered at $\b q$ containing $\b q+\b a$, to apply a component-wise multivariate Taylor theorem.
\begin{align}
G(\b k^i) = G(\b q) + DG|_{\b q} \underbrace{\left(\Delta t \sum_{j=1}^{s}a_{i,j}\b f(\b k^j) + (\Delta t)^{1/2}\sum_{j=1}^{s}\tilde{a}_{i,j}G(\b k^j)\Delta \b Z\right)}_{:=\b a} + R_1(\b q, \b a) ,\\
G(\tilde{\b k}^i) =G(\b q) + DG|_{\b q} \underbrace{\left(\Delta t \sum_{j=1}^{s}a_{i,j}\b f(\tilde{\b k}^j) + (\Delta t)^{1/2}\sum_{j=1}^{s}\tilde{a}_{i,j}G(\tilde{\b k}^j)\Delta \tilde{\b Z}\right)}_{:=\tilde{\b a}}  + R_2(\b q, \tilde{\b a}). 
\end{align}
Where $R_1(\b q, \b a)$, $R_2(\b q, \tilde{\b a})$, are remainder terms satisfying the following bounds $||R_1(\b q, \b a)||_{2}\leq L ||\b a||^2$, $||R_2(\b q, \tilde{\b a})||_{2} \leq L ||\tilde{\b a}||^2$. 
We similarly require that the $k$-th component of $\b f$ is continuous and twice continuously differentiable within an open ball centered at $\b q$ containing $\b q+\b a$, to apply a component-wise multivariate Taylor theorem as follows
\begin{align}
\b f(\b k^i) &= \b f(\b q) + D \b f|_{\b q}\b a + \b R_3(\b q,\b a),\\
\b f(\tilde{\b k}^i) &= \b f(\b q) + D\b f|_{\b q}\tilde{\b a} + \b R_4(\b q,\tilde{\b a}).
\end{align}
Where $||\b R_3(\b q, \b a)||_2\leq L ||\b a||^2_2$, $||\b R_4(\b q, \tilde{\b a})||_2\leq L ||\tilde{\b a}||^2_2$. So that the difference can be written as
\begin{align}
\b q^{n+1} - \tilde{\b q}^{n+1} &= \Delta t\sum_{i=1}^{s}b_{i} \left[DF|_{\b q}(\b a -\tilde{\b a})\right]  +\Delta t\sum_{i=1}^{s}b_{i}\left[ R_3(\b q,\b a) - R_4(\b q,\tilde{\b a})\right]
+\sqrt{\Delta t}
\sum_{i=1}^{s} \tilde{b}_{i} 
G(\b q) \left[\Delta \b Z - \Delta \widetilde{\b Z}\right]\\
&+ \sqrt{\Delta t}
\sum_{i=1}^{s} \tilde{b}_{i} DG|_{\b q}(\b a \Delta \b Z- \tilde{\b a}\Delta \widetilde{\b Z}) + \sqrt{\Delta t}
\sum_{i=1}^{s} \tilde{b}_{i}\left( R_1(\b q,\b a)\Delta \b Z- R_2(\b q,\b a)\Delta \widetilde{\b Z}\right).\label{eq:deo}
\end{align}
% We require estimates of the form $||\mathbb{E}[\b q^{n+1} - \tilde{\b q}^{n+1}]||_2$, $\mathbb{E}[||\b q^{n+1} - \tilde{\b q}^{n+1}||^2]$, it is convenient to note that upon taking expectation one can bound the higher order remainder terms, for example using the triangle inequality and the multivariate chain on $||\b a||^2_2,||\tilde{\b a}||^2_2$ one can establish that
% \begin{align}
% \mathbb{E}\left[\Delta t\sum_{i=1}^{s}b_{i}\left[ R_3(\b q,\b a) - R_4(\b q,\tilde{\b a})\right]\right] = O(\Delta t^2).
% \end{align}

Now Taylor expanding terms in $\b a, \tilde{\b a}$, about $\b q$ gives the following
\begin{align}
\Delta t\sum_{i=1}^{s}b_{i} \left[DF|_{\b q}(\b a -\tilde{\b a})\right] &=  
\Delta t\sum_{i=1}^{s}b_{i} DF|_{\b q} \left((\Delta t)^{1/2}\sum_{j=1}^{s}\tilde{a}_{i,j}G(\b q)[\Delta \b Z-\Delta \widetilde{\b Z}] + \mathcal{O}(\Delta t)\right),
\end{align}
\begin{align}
\Delta t^{1/2}\sum_{i=1}^{s}\tilde{b}_i DG|_{\b q} (\b a \Delta \b Z - \tilde{\b a}\Delta \widetilde{\b Z})
% =
% \Delta t^{1/2}\sum_{i=1}^{s}\tilde{b}_i DG|_{\b q}
% \Bigg[&
% \Delta t \sum_{j=1}^{s}a_{ij}\left(DF|_{\b q}(\b a - \tilde{\b a}) + R_3-R_4\right) +\Delta t^{1/2}\sum_{j=1}^{s}\tilde{a}_{ij}G(q)(\Delta \b Z - \Delta \tilde{ \b Z})\\
% &+\Delta t^{1/2}\sum_{j=1}^{s}\tilde{a}_{ij}DG|_{\b q} (\b a \Delta \b Z- \tilde{\b a}\Delta \tilde{\b Z} )+\Delta t^{1/2}\sum_{j=1}^{s}\tilde{a}_{ij}(\Delta \b Z R_1 - \tilde{\b Z} R_2)
% \Bigg]\\
&=
\Delta t^{1/2}\sum_{i=1}^{s}\tilde{b}_i DG|_{\b q}
\Bigg[\Delta t^{1/2}\sum_{j=1}^{s}\tilde{a}_{ij}G(q)(\Delta \b Z - \Delta \widetilde{ \b Z})+\mathcal{O}(\Delta t)
\Bigg].
\end{align}
Substituting these expressions (into \cref{eq:deo}), gives the following expression
\begin{align}
\b q^{n+1}-\tilde{\b q}^{n+1} = \sqrt{\Delta t}\sum_{i=1}^{s}\tilde{b}_i G(\b q^n)[\Delta \b Z - \Delta \widetilde{\b Z}] + (\Delta t)^{3/2} \sum_{i=1}^s DF|_{\b q} \sum_{j=1}^{s}\tilde{a}_{i,j} \left[G(\b k^j)\Delta \b Z - G(\tilde{\b k}^j)\Delta \widetilde{\b Z}\right]\\
+ \Delta t \sum_{i=1}^{s}\tilde{b}_i DG|_{\b q} \sum_{j=1}^{s}\tilde{a}_{i,j} \left[ G(\b k^{j})\Delta \b Z\otimes \Delta \b Z -G(\b k^{j})\Delta\widetilde{\b Z}\otimes \Delta\widetilde{\b  Z} )\right] + \mathcal{O}(\Delta t^{3/2})
\end{align}
Where for notational convenience we have written $DG_{\b q} (G_{\b q} \Delta \b Z) \Delta \b Z = DG_{\b q} G_{\b q} (\Delta \b Z \otimes \Delta \b Z )$, and have used the remainder bounds from Taylor's theorem \cref{sec:taylors-theorem} and \cref{sec:inequalities} to establish the other terms are higher order terms.
The subsequent Taylor expansion of $G$ again leads to (at leading order) the following expression 
\begin{align}
\b q^{n+1}-\tilde{\b q}^{n+1} = \sqrt{\Delta t}G(\b q)\sum_{i=1}^{s}\tilde{b}_i [\Delta \b Z - \Delta \widetilde{\b Z}] + \Delta t DG|_{\b q}G(\b q)\sum_{i=1}^{s}\tilde{b}_i  \sum_{j=1}^{s}\tilde{a}_{i,j}   \left[\Delta \b Z\otimes \Delta \b Z -\Delta \widetilde{\b Z}\otimes \Delta \widetilde{\b Z}\right]\\
+ (\Delta t)^{3/2} DF|_{\b q}G(\b q)\sum_{i=1}^s  \sum_{j=1}^{s}\tilde{a}_{i,j} \left[\Delta \b Z - \Delta \widetilde{\b Z}\right] +  \mathcal{O}(\Delta t^{3/2}).
\end{align}
Now since $(a_{i,j},b_j)$, $(\tilde{a}_{i,j},\tilde{b}_j)$ are assumed finite, taking the expectation sets the first term to zero by the symmetric property of $\Delta Z$ and $\Delta \widetilde{Z}$. The $i,j$-th component of $(\Delta Z\otimes \Delta Z)_{i,j}$, also vanishes for $i\neq j$ in expectation, leaving at leading order an error associated with numerically approximating the Itô-Stratonovich correction by using bounded increments (as opposed to normally distributed ones). This can be bounded as follows 
\begin{align}
\mathbb{E}[\b q^{n+1}-\tilde{\b q}^{n+1}] &= \Delta t \sum_{i=1}^s \tilde{b}_i\sum_{i=1}^s \tilde{a}_{i,j}
DG|_{\b q} G(\b q) P (1-\mathbb{E}[\Delta \widetilde{Z}^p]) + \mathcal{O}(\Delta t^{3/2})  \\
&\leq   (\tilde{\b b}^T\tilde{A}\b e)
DG|_{\b q} G(\b q) P \left(1+\frac{4}{\sqrt{\pi}}\sqrt{k|\ln(\Delta t)|}\right)\Delta t^{k+1}+ \mathcal{O}(\Delta t^{3/2}). \label{eq1:estimate_ok}
\end{align}

Sufficient to conclude that the leading order term is $\mathcal{O}(\Delta t^{\min\lbrace k+1-\epsilon,3/2\rbrace})$ where $\epsilon>0$ is a small number. Then consider the square of the difference, 
\begin{align}
\mathbb{E}[||\b q^{n+1}-\tilde{\b q}^{n+1}||_2^2] &\leq  \Delta t ||G(\b q)||_2^2 \left(\sum_{i=1}^{s} b_i\right)^2 \mathbb{E}[|| \Delta \b Z - \Delta \widetilde{\b Z}||_2^2] \\
&+ \mathbb{E}\bigg[ \underbrace{\Delta t^{3/2} (\tilde{\b b}^T \b e )(\tilde{\b b}^T \tilde{A}\b e)\frac{\partial g_p^k}{\partial q^j}g^j_q g^k_r \left(\Delta Z^p \Delta Z^q - \Delta \widetilde{Z}^p \Delta \widetilde{Z}^q\right) (\Delta Z^r - \Delta \widetilde{Z}^r)}+ \mathcal{O}(\Delta t^2)\bigg] .
\end{align}
It needs to be shown that the underbraced term (where summation notation is adopted for convenience) is order higher order than 2. This underbraced term vanishes due to Isserlis theorem and symmetry of the distribution and using the inequality established when considering the total law of expectation conditioned on the events $(Z<-A_{\Delta t},|Z|<A_{\Delta t}, Z>A_{\Delta t})$ when $p=q=r$. So that we attain the following estimate  
\begin{align}
\mathbb{E}[||\b q^{n+1}-\tilde{\b q}^{n+1}||_2^2]^{1/2} &\leq \mathcal{O}\left(\Delta t^{\min \left\lbrace1,(1+k)/2\right \rbrace}\right)\label{eq2:estimate_ok}
\end{align}
Then by the FTMSC \cref{thm:fundamental-thm-msc} and \cref{convergence lemma}, \cref{eq2:estimate_ok,eq1:estimate_ok} imply mean square order $p_2-1/2$ convergence when the conditions $p_2 = \min \lbrace (k+1)/2,1\rbrace\geq 1$, $p_1=\min \lbrace k+1-\epsilon,3/2\rbrace \geq 3/2$ are met, $k\geq 1$, is sufficient in the limit of small $\Delta t$ for MS convergence order 1/2. 
\end{proof}

We would also like to remark that this can extend to the generalised setting of SGARK schemes including \cref{method: Stochastic Generalised Additive Runge-Kutta} in a similar manner in \cref{lem: mse GARK} below. 

\begin{lemma}[GARK extension]\label{lem: mse GARK}
Let $\tilde{\b q}^{n+1}$ be the solution driven by bounded increments \cref{Milstein-Tretyakov Bounded normal increments} with $k\geq 1$, and the normally distributed driven system $\b q^{n+1}$, for any GARK scheme, then for $ f^{k} \in C^{2}(\mathbb{R}^d;\mathbb{R})$, $ g^{k}_p \in C^{2}(\mathbb{R}^d; \mathbb{R})$, $\forall p\in \lbrace 1,...,P\rbrace$, $\forall k \in \lbrace 1,...,d\rbrace$. One has the following mean square estimates,
\begin{align}
\mathbb{E}[\b q^{n+1}-\tilde{\b q}^{n+1}] = \mathcal{O}(\Delta t^{\min \lbrace k+1-\epsilon,3/2\rbrace}),\quad (\mathbb{E}[ || \b q^{n+1}-\tilde{\b q}^{n+1}||_2^2])^{1/2}= \mathcal{O}(t^{\min \lbrace (k+1)/2,1\rbrace}),
\end{align}
given $DG|_{\b q}$,$G(\b q)$,$DF|_{\b q}$,$F(\b q)$ are bounded, and all components of all the Butcher-tableau's are component-wise bounded.
\end{lemma}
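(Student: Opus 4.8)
The plan is to run the argument of \cref{lem: mse} essentially verbatim, replacing the two–tableau additive structure by the four–matrix generalised structure of \cref{method: Stochastic Generalised Additive Runge-Kutta}. First I would subtract the bounded–increment step from the normal–increment step: from \cref{eq:SGARK_final} the one–step difference splits into a drift part $\Delta t\sum_i b^f_i[\b f(\b k^f_i)-\b f(\tilde{\b k}^f_i)]$ and a diffusion part $\sqrt{\Delta t}\sum_i b^g_i[G(\b k^g_i)\Delta\b Z-G(\tilde{\b k}^g_i)\Delta\widetilde{\b Z}]$, supplemented by the differences of the stage equations \cref{eq:SGARK_kf,eq:SGARK_kg}. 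Although the stage equations are implicit, the assumed boundedness of $D\b f|_{\b q}$ and $DG|_{\b q}$ together with the $C^2$ regularity guarantee, for $\Delta t$ small, a well-defined and smoothly dependent stage solution with every stage increment $\b k^f_i-\b q^n$, $\b k^g_i-\b q^n$ of size $\mathcal{O}(\Delta t^{1/2})$ in mean square, which validates the Taylor expansions below.

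Next I would Taylor-expand $\b f$ and $G$ about $\b q^n$ at every stage exactly as in \cref{lem: mse}, writing $G(\b k^g_i)=G(\b q^n)+DG|_{\b q^n}(\b k^g_i-\b q^n)+R^g_i$ with $\|R^g_i\|_2\le L\|\b k^g_i-\b q^n\|_2^2$ (and similarly for $\b f$ at the $\b k^f$ stages), then re-expanding the stage increments themselves. Collecting powers of $\Delta t^{1/2}$ leaves, at leading order,
\begin{align*}
\b q^{n+1}-\tilde{\b q}^{n+1}
&= \sqrt{\Delta t}\,G(\b q^n)\Bigl({\textstyle\sum_i b^g_i}\Bigr)\bigl[\Delta\b Z-\Delta\widetilde{\b Z}\bigr]\\
&\quad+\Delta t\,DG|_{\b q^n}G(\b q^n)\Bigl({\textstyle\sum_i b^g_i\sum_j a^{g,g}_{ij}}\Bigr)\bigl[\Delta\b Z\otimes\Delta\b Z-\Delta\widetilde{\b Z}\otimes\Delta\widetilde{\b Z}\bigr]+\mathcal{O}(\Delta t^{3/2}),
\end{align*}
so that only the diffusion weights $b^g$ and the diffusion–diffusion coupling $a^{g,g}$ enter through order $\Delta t$, while the couplings $a^{f,f},a^{f,g},a^{g,f}$ — together with a $\Delta t^{3/2}\,D\b f|_{\b q^n}G(\b q^n)$ term whose expectation vanishes because $\Delta\b Z-\Delta\widetilde{\b Z}$ is centred — appear only in the $\mathcal{O}(\Delta t^{3/2})$ remainder; this is the only structural difference from the SARK case, where these roles were played by $\tilde b$ and $\tilde A$. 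Boundedness of $G,DG,F,DF$ and of all tableau entries, together with the moment bounds of \cref{Milstein-Tretyakov Bounded normal increments} and the estimates of \cref{sec:taylors-theorem,sec:inequalities}, makes the remainder genuinely $\mathcal{O}(\Delta t^{3/2})$ in mean square (up to logarithmic factors absorbed into $\epsilon$).

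The two local error estimates then follow exactly as for \cref{lem: mse}. Taking the expectation, the $\sqrt{\Delta t}$ term vanishes by symmetry of $\Delta\b Z$ and $\Delta\widetilde{\b Z}$, the off-diagonal entries of $\Delta\b Z\otimes\Delta\b Z-\Delta\widetilde{\b Z}\otimes\Delta\widetilde{\b Z}$ vanish by independence of the components, and the diagonal entries leave $\mathbb{E}[(\Delta Z^p)^2]-\mathbb{E}[(\Delta\widetilde{Z}^p)^2]\le(1+\tfrac{4}{\sqrt{\pi}}\sqrt{k|\ln\Delta t|})\Delta t^k$ by \cref{eq:milstein inequality}, giving $\mathbb{E}[\b q^{n+1}-\tilde{\b q}^{n+1}]=\mathcal{O}(\Delta t^{\min\{k+1-\epsilon,3/2\}})$. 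For the mean-square estimate, $\mathbb{E}[\|\b q^{n+1}-\tilde{\b q}^{n+1}\|_2^2]$ is dominated by $\Delta t\,\|G(\b q^n)\|_2^2({\textstyle\sum_i b^g_i})^2\,\mathbb{E}[\|\Delta\b Z-\Delta\widetilde{\b Z}\|_2^2]=\mathcal{O}(\Delta t^{k+1})$ and by $\mathcal{O}(\Delta t^2)$ from the square of the $\mathcal{O}(\Delta t)$ piece (its fourth moments being bounded for both increments, since the truncation only shrinks them), while the cross term between the $\mathcal{O}(\Delta t^{1/2})$ and $\mathcal{O}(\Delta t)$ pieces is $\Delta t^{3/2}$ times a linear combination of $\mathbb{E}\bigl[(\Delta Z^p\Delta Z^q-\Delta\widetilde{Z}^p\Delta\widetilde{Z}^q)(\Delta Z^r-\Delta\widetilde{Z}^r)\bigr]$, each of which vanishes: by independence and centring when $\{p,q\}\ne\{r\}$, and by \cref{eq:additional inequality} when $p=q=r$. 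Hence $(\mathbb{E}[\|\b q^{n+1}-\tilde{\b q}^{n+1}\|_2^2])^{1/2}=\mathcal{O}(\Delta t^{\min\{1,(k+1)/2\}})$, and combining with \cref{thm:fundamental-thm-msc,convergence lemma} gives mean square order $1/2$ for $k\ge1$.

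The main obstacle is not any single estimate but the combinatorial bookkeeping of the GARK expansion: with four coupling matrices and two stage families one must track which products of tableau entries multiply which tensor power of the increments, and in particular confirm that the potentially dangerous $\Delta t^{3/2}$ contribution to the mean square has exactly zero expectation rather than merely being $\mathcal{O}(\Delta t^{3/2})$ — the latter would be a half order short of what \cref{thm:fundamental-thm-msc} demands. As in \cref{lem: mse} this reduces to the vanishing cubic cross-moment \cref{eq:additional inequality} (together with Isserlis/Wick and symmetry for the mixed-index cases), and verifying that every cross-moment produced by the expansion collapses in this way is the delicate step; once it is checked, the remainder is a mechanical transcription of the SARK proof with $b^g,a^{g,g}$ in place of $\tilde b,\tilde A$.
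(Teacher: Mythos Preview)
Your proposal is correct and follows essentially the same route as the paper's proof: form the one-step difference, Taylor-expand $G$ and $\b f$ about $\b q^n$, isolate the $\sqrt{\Delta t}\,G(\b q^n)(\Delta\b Z-\Delta\widetilde{\b Z})$ and $\Delta t\,DG|_{\b q^n}G(\b q^n)(\Delta\b Z\otimes\Delta\b Z-\Delta\widetilde{\b Z}\otimes\Delta\widetilde{\b Z})$ terms, and conclude via \cref{eq:milstein inequality,eq:additional inequality}. Your bookkeeping is in fact tighter than the paper's terse version---you correctly identify that only $b^g$ and $a^{g,g}$ enter at order $\Delta t$ and you make the vanishing of the $\Delta t^{3/2}$ cross-moment explicit---so the ``delicate step'' you flag is already handled.
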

\begin{proof}
Consider the difference
\begin{align}
\b q^{n+1} - \tilde{\b q}^{n+1} &=  \Delta t \sum_{i=1}^{s^{f}}b^f_i \left( \b f( \b k^f_i)-\b f(\tilde{\b k}^f_i) \right) + \sqrt{\Delta t}\sum_{i=1}^{s^g}b^g_{i} \left( G(\b k^g_i)\Delta \b Z - G(\tilde{\b k}^{g}_{i})\Delta \widetilde{\b Z}\right), 
\end{align}
and then consider Taylor expansions around $\b q$ where
\begin{align}
\b k_{i}^f &= \b q + \Delta t \sum_{j=1}^{s^f}a^{f,f}_{i,j}\b f(\b k^f_j) + \sqrt{\Delta t}\sum_{j=1}^{s^g} a^{f,g}_{i,j}G(\b k^g_j)\Delta \b Z,\quad \tilde{\b k}_{i}^f = \b q + \Delta t \sum_{j=1}^{s^f} a^{f,f}_{i,j}\b f(\tilde{\b k}^f_j) + \sqrt{\Delta t}\sum_{j=1}^{s^g} a^{f,g}_{i,j} G(\tilde{\b k}^g_j)\Delta \widetilde{\b Z}, \\
\b k_{i}^g &= \b q + \Delta t \sum_{j=1}^{s^f} a^{g,f}_{i,j}\b f(\b k^f_j) + \sqrt{\Delta t}\sum_{j=1}^{s^g}a^{g,g}_{i,j}G(\b k^g_j)\Delta \b Z, \quad \tilde{\b k}_{i}^g = \b q + \Delta t \sum_{j=1}^{s^f}a^{g,f}_{i,j}\b f(\tilde{\b k}^f_j) + \sqrt{\Delta t}\sum_{j=1}^{s^g} a^{g,g}_{i,j}G(\tilde{\b k}^g_j)\Delta \widetilde{\b Z},
\end{align}
so that 
\begin{align}
\b q^{n+1}-\tilde{\b q}^{n+1} &= \sqrt{\Delta t}\sum_{i=1}^{s^g}\tilde{b}^g_{i} G(\b q) (\Delta\b Z - \Delta \widetilde{\b Z})\\
&+\Delta t \sum_{i=1}^{s^g}\tilde{b}^g_{i} \left( \sum_{j=1}^{s^g}\tilde{a}^{g,g}_{i,j}DG|_{\b q}(G(\b q)\Delta \b Z)\Delta\b Z - \sum_{j=1}^{s^g}\tilde{a}^{f,g}_{i,j}DG|_{\b q}(G(\b q)\Delta \widetilde{\b Z}) \Delta \widetilde{\b Z}\right) + \mathcal{O}(\Delta t^{3/2}), 
\end{align}
as before in expectation one can show, that the leading order terms are 
\begin{align}
\mathbb{E}[\b q^{n+1}-\tilde{\b q}^{n+1}]
&\leq  (\tilde{b}^{g}\tilde{A}^{g,g}e + \tilde{b}^{g}\tilde{A}^{f,g}e)
DG|_{\b q} G(\b q) P \left(1+\frac{4}{\sqrt{\pi}}\sqrt{k|\ln(\Delta t)|}\right)\Delta t^{k+1}+ \mathcal{O}(\Delta t^{3/2}) \\
&\leq \mathcal{O}(\Delta t^{\min \lbrace k+1 -\epsilon ,3/2\rbrace}),\\
\mathbb{E}[||\b q^{n+1}-\b q^{n+1}||_2^2] &\leq  \Delta t ||G(\b q)||_2^2 \left(\sum_{i=1}^{s^{g}} b^{g}_i\right)^2 \mathbb{E}[|| \Delta \b Z - \Delta \widetilde{\b Z}||_2^2] + \mathcal{O}(\Delta t^2) \leq \mathcal{O}(t^{\min \lbrace k+1,2 \rbrace}).
\end{align}
These are sufficient for the bounded increment-driven system to converge to the same solution as the unbounded increment-driven system, with the same mean square order of 1/2.  
\end{proof}

\section{Practical Methods and Numerical Demonstrations}

\subsection{Practical Methods}\label{sec:Practical Methods}
We state some common strong stability preserving numerical integrators in their canonical Shu-Osher form when we replace the forward Euler method with the Euler Maruyama scheme. The Shu-Osher representation of the numerical schemes is typically computationally efficient in terms of memory allocation and allows easy implementation. \newline

\textbf{SSP Stochastic Runge-Kutta}
\begin{method}[SSP22 Stratonovich (Heun)]\label{method:ssp22}
The two-stage second-order strong stability preserving Stochastic Runge-Kutta method
\begin{align}
\b q^1 &= \operatorname{EM}(\b q^n)\\
\b q^{n+1} &= \frac{1}{2}\b q^n + \frac{1}{2}\operatorname{EM}(\b q^1). 
\end{align}
converges to the Stratonovich form of the equation  \cref{stratonovich}, with weak order 1 using the increments in \cref{eq:2point_random_variable}, and mean square (strong order) 1/2 using the increments \cref{Milstein-Tretyakov Bounded normal increments}. The scheme is SSP in the stochastic setting with a radius of monotonicity $1$.
\end{method}

\begin{method}[SSP33 Stratonovich (Shu-Osher method)]\label{method:ssp33}
\begin{align}
\b q^1 &= \operatorname{EM}(\b q^n)\\
\b q^1 &= \frac{3}{4} \b q^n + \frac{1}{4}\operatorname{EM}(\b q^1)\\
\b q^{n+1} &= \frac{1}{3} \b q^n + \frac{2}{3}\operatorname{EM}(\b q^1)
\end{align}
converges to the Stratonovich form of the equation \cref{stratonovich}, with weak order 1 when using the bounded increments in \cref{eq:2point_random_variable}, and (mean square) strong order 1/2 when using bounded increments in \cref{eq:bounded increments}. And is contractive/SSP when the Euler Maruyama scheme is, with radius of monotonicity $1$.
\end{method}

\begin{method}[SSP104-Stratonovich-(Ketcherson \cite{ketcheson2008highly})]\label{method:SSP104}
Let the time Scaled Euler Maruyama be denoted by 
\begin{align}
\b q^{n+1} &= \operatorname{EM}(a\Delta t,\b q)  = \b q^n + a \b f(\b q^n)\Delta t +  a^{1/2} \sum_{p} \b g_p (\b q^n) \Delta \b S^{p}, 
\end{align}
then the SSP104-Stratonovich-(Ketcherson) method can be defined as follows
\begin{align}
\b q^{0} &= \b q^{n},\\
\b q^{i+1} &= \operatorname{EM}(1/6,\b q^{i}), \quad \text{for} \quad i=0,1,2,3, \\
\b q^{5} &= \frac{3}{5}\b q^{0} +\frac{2}{5}\operatorname{EM}(1/6,\b q^{4}),\\
\b q^{i+1} &= \operatorname{EM}(1/6,\b q^{i}), \quad\text{for} \quad i=5,6,7,8, \\
\b q^{n+1} &= \frac{1}{25}\b q^n + \frac{9}{25}\operatorname{EM}(1/6,\b q^{4}) + \frac{15}{25}\operatorname{EM}(1/6,\b q^{9}).
\end{align}
converges to the Stratonovich form of the equation  \cref{stratonovich}, with weak order 1 using the increments in \cref{eq:2point_random_variable}, and mean square (strong order) 1/2 using the increments in \cref{Milstein-Tretyakov Bounded normal increments}. And is contractive with respect to arbitrary seminorms, when Euler Maruyama is, with radius of monotonicity $6$.
\end{method}
% \begin{method}[ITO-SSP22-1]\todo[inline]{(no)}
% \begin{align}
%     \b q^{1} &= FE(\b q^n)\\
%     \b q^{n} &= 1/2(\b q^n + EM(\b q^{1}))
% \end{align} 
% This misses the Itô Stratonovich correction and converges to the Itô system [..] with weak order 1 when using the increments \cref{eq:2point_random_variable}, but captures the essential (mixed cross deterministic derivatives in higher dimensions) deterministic modelling term $\Delta t^2 /2 ff'$, unlike a usual Euler Maruyama scheme. 
% \end{method}
% \begin{method}[ITO-SSP22-2]\todo[inline]{(no)}
% \begin{align}
%     \b q^{1} &= EM(\b q^n)\\
%     \b q^{n} &= 1/2(\b q^n + FE(\b q^{1}))
% \end{align} 
% This misses the Ito Stratonovich correction, but captures some useful deterministic dynamical terms unlike a usual Euler Maruyama scheme. 
% \end{method}

\textbf{SSP Stochastic Generalised Additive Runge-Kutta}
\begin{method}[SSP- Sequential Operator Splitting]\label{method:SSP- Sequential Operator Splitting}
We consider the following Sequential operator splitting method associated with Strang 
\begin{align}
\b q^{a} &= \operatorname{SSP2m2}(\b q^n,\b f,\Delta t/2),\\
\b q^b &= \operatorname{SSP2n2}(\b q^{a},G,\Delta \b S),\\
\b q^{n+1} &= \operatorname{SSP2m2}(\b q^b,\b f,\Delta t/2).
\end{align}    
SSP2m2 is a $2m$-stage RK method with radius of monotonicity $C = m$, equivalent to m stages of the SSP22 scheme above. Here the omission of $G$, $\Delta \b S$ notationally implies this is a drift-only scheme, and the omission of $\b f, \Delta t$ refers to this being a diffusion only scheme. The total method is SSP with time-step restriction $\min(2m \tau_f , n\tau_g)$. This scheme converges with weak order 1 when using the bounded increments \cref{eq:2point_random_variable}, and mean square (strong) order 0.5 when using increments in \cref{Milstein-Tretyakov Bounded normal increments}.
\end{method}

\begin{method}[SSP-Additive Operator splitting]\label{method:SSP- Additive Operator Splitting}
We consider the following additive operator splitting method (also attributable to Strang)
\begin{align}
\b q^{a} &= \operatorname{SSP2m2}(\b q^n,\b f,\Delta t),\quad
\b q^{b} = \operatorname{SSP2n2}(\b q^n,G,\Delta \b S),\\
\b q^{ba} &= \operatorname{SSP2n2}(\b q^a,G,\Delta \b S),\quad
\b q^{ab} = \operatorname{SSP2m2}(\b q^b, \b f,\Delta t),\\
\b q^{n+1} &= 1/2(\b q^{ab}+\b q^{ba}).
\end{align}
Since this is also a convex combination this method has an SSP timestep criterion given by $\min(m\tau_f,n\tau_g)$.
Therefore one can adjust $m,n$ to attain a scheme of larger monotonicity and larger timestep based on $\b f, G$. This scheme converges with weak order 1 to the Stratonovich system when using the bounded increments \cref{eq:2point_random_variable}, and mean square (strong) order $1/2$ to the Stratonovich system when using increments  \cref{eq:bounded increments}. 
\end{method}

\subsection{Numerical Demonstrations}

\subsubsection{Example 1a: Stochastic 2D Burgers equation, with slope limiters. }\label{sec:example 1a burgers}

Theoretically, we have the following three sufficient properties required for a monotone solution, 

\begin{enumerate}
\item A numerical method with a provable monotonic property for the Euler Maruyama map.
\item Bounded increments. 
\item SSP time integration, with a non-zero radius of monotonicity.
\end{enumerate}  
We wish to numerically test what happens when these conditions are individually violated.
We consider the following 2D extension of the inviscid Burgers equation, 
\begin{align}
d q_t+\left( \left(\frac{1}{2}q^2\right)_x+\left(\frac{1}{2} q^2\right)_y \right) dt + \left( a(q)_x + b(q)_y \right)\circ dW_t= 0,
\end{align}
stochastically translated by a single uniform vector field $(a,b)$ integrated in the Stratonovich sense against a one-dimensional Wiener process. This transforms the 2D-Burgers equation into a stochastic frame of reference, and the solution properties are unchanged from the 2D deterministic case. We shall first describe a numerical method with a specific provable monotonic property for the Euler Maruyama map, consisting of an approximate stochastic Reimann solver and a slope limiter, summarised below.

\begin{method}[FV2 with $N^2(K)\cup N(K)$ limiter \cite{woodfield2024higher}]\hspace{0pt}
\begin{enumerate}
    \item Project cell mean values to pointwise cell centered values $q_{i,j} = \mathcal{P}_2 \bar{q}_{i,j} = \bar{q}_{i,j}$, this direct evaluation is second order $\mathcal{O}(\Delta x^2+\Delta y^2)$.  
    \item Get gradients within each cell $\lbrace q_x, q_y\rbrace$ using the pointwise cell centered values, the second order finite difference stencil $1/2([1,0,-1])^T$.
    \item Create a linear subcell reconstruction within each cell $p_{i,j}(x) = \bar{q}_{i,j} + (x-x_i)q_x + (y-y_i)q_y$ from cell mean values and pointwise reconstructed gradients.
    \item Reconstruct flux contributing edge defined values $q_{i,j}^{R},q_{i,j}^{L},q_{i,j}^{U},q_{i,j}^{D}$ by evaluating the reconstructed polynomial. For example, the evaluation of $p_{i,j}$ at the right edge of cell $(i,j)$, at $(x_{i+1/2},y_{j})$, gives $q^{R}_{i,j} = q_{i,j} + 1/4  (q_{i+1,j} - q_{i-1,j})$.
\item $N^{2} \cup N(K)$-MP Limiting procedure is employed to ensure an edge-defined local maximum principle specified in \cite{woodfield2024higher}.
\item $\mathcal{R}$ Resolve Reimann problem. Consider the Reimann problem at $(x_{i+1/2},y_{j})$. Where to the left of the edge discontinuity $q^{L}_{i+1/2} = q^{R}_{i}$, and to the right edge of the discontinuity we have $q^{R}_{i+1/2} = q^{L}_{i+1}$. This creates a discontinuous initial value problem known as a Reimann problem. This is resolvable exactly for the deterministic Burger's equation (Godunov's approach), and here in the stochastic case will be treated with a stochastic extension of a Local Lax Friedrich flux \cref{def:LLF}.
\item $\mathcal{F}$ Flux computation, quadrature, this is done through second order Gauss quadrature. 
\item $\mathcal{E}$ Evolve, the cell mean value by the fluxes, in a flux form forward Euler or Euler-Maruyama stage. 
\end{enumerate}
\end{method}
Whilst it may be possible to solve the exact Reimann problem for this particular problem, approximate Reimann solvers are cheaper and widely adopted for more complicated systems. 

\begin{definition}[LLF-EM-Stochastic Burgers equation Flux]\label{def:LLF}
Define the stochastic Euler Maruyama flux functions, as follows, using ideas from Kurganov and Tadmore \cite{kurganov2000new}, 
\begin{align}
 \mathbb{F}(q,a) = \frac{1}{2} q^2 + a q \frac{\Delta W_t}{\Delta t},
\end{align}
Compute the stochastic Euler-Maruyama maximum wave speed
\begin{align}
    \alpha_{i+1/2} = \max_{q\in \lbrace q^L_{i+1/2},q^R_{i+1/1}\rbrace} \left| \frac{d}{dq}\mathbb{F}(q,a)\right| = \max \left\lbrace \left| a \Delta W /\Delta t + q^L_{i+1/2}\right|, \left|a \Delta W /\Delta t +q^R_{i+1/2} \right| \right\rbrace. \label{eq:alpha wavespeed}
\end{align}
% \begin{align}
%     \beta_{i+1/2} = \max_{q\in \lbrace q^L_{i+1/2},q^R_{i+1/1}\rbrace} \left| \frac{d}{dq}\mathbb{F}(q,a)\right| = \max \left\lbrace \left| a \Delta W /\Delta t \right| + \left|q^L_{i+1/2}\right|, \left|a \Delta W /\Delta t \right|+\left|q^R_{i+1/2} \right| \right\rbrace,
% \end{align}
and use these to define the stochastic Euler-Maruyama Local Lax Friedrich (Rusanov) flux 
\begin{align}
f_{i+1/2} &= \frac{1}{2}\left(\mathbb{F}(q^L_{i+1/2},a) + \mathbb{F}(q^R_{i+1/2},a)\right) - \frac{1}{2}\alpha_{i+1/2}(q^R_{i+1/2} - q^L_{i+1/2}).
\end{align}
\end{definition}

This defines an EM flow map for the stochastic Burgers equation. The notion of monotonicity is that the EM numerical flow map is a monotonically nondecreasing function of quadrature point evaluations, and the slope limiting enforces a specific discrete local maximum principle. More specifically, the numerical method can be decomposed into 4 separate 3-point HHLK monotone schemes at each flux-contributing quadrature point. This is done by decomposing the cell mean value into 4 edge-defined points using its linear subcell representation, for example, the right edge takes the following form
\begin{align}
H_{i+1/2} &= \frac{1}{4}q^R_{i+1/2} - \Delta t F_{i+1/2}(q^L_{i+1/2},q^R_{i+1/2})  \\
F_{i+1/2}(q^L_{i+1/2},q^R_{i+1/2})  &= 1/2\left(\mathbb{F}(q^L,a) + \mathbb{F}(q^R,a)- \alpha (q^R -q^L)\right)
\end{align}
where $\alpha$ is a yet to be defined constant. The numerical flux is monotone in the sense $\partial_{q^{L}} F_{i+1/2} \geq 0$, $\partial_{q^{R}} F_{i+1/2} \leq 0$ if one then defined $\alpha$ to be as \cref{eq:alpha wavespeed}. Therefore, the three-point scheme $H_{i+1/2}$ is monotonically increasing in terms of quadrature points $\frac{\partial H}{\partial q^R}, \frac{\partial H}{\partial q^L} \geq 0$. Therefore the Euler Maruyama scheme is a monotone function of the quadrature point evaluations. If the $N^2(K)\cup N(K)$-Limiter is employed, the quadrature points themselves are bounded in terms of locally defined cell mean values in such a way one has the resulting local maximum principle of the resulting scheme
\begin{align}
\bar{q}^{n+1}_{i,j}\in \left[\min_{i,j\in S_{ij}}\bar{q}^{n}_{i,j} \max_{i,j\in S_{ij}}\bar{q}^{n}_{i,j} \right]
\end{align}
where for a 2D Cartesian mesh one has the set of face-sharing neighbours and their corresponding face-sharing neighbours describes a 13-point stencil. The SSP22 time integration turns this into a slightly wider different local maximum principle (as elaborated on in the appendix of \cite{woodfield2024new}), required when establishing internal local maximum principles for the substages.

The numerical method described above is not proven to be convergent or even claimed to be sensible for this equation, the method proposed simply remains range-bounded, in the sense
\begin{align}
\bar{q}^{n+1}_{i,j}\in \left[\min_{i,j}\bar{q}^{n}_{i,j} \max_{i,j}\bar{q}^{n}_{i,j} \right].
\end{align}

Nevertheless, we wish to demonstrate that the solution will be bounded, provided SSP integration, limiting and Bounded increments are used, and wish to investigate when these conditions are individually violated. To demonstrate this we consider the following cases. 
\begin{enumerate}
    \item SSP22 + Limiter + Bounded increments. 
    \item SSP22 + Limiter + Unbounded increments. 
    \item NON SSP integration + Limiter + Bounded increments.
    \item SSP22 + Unlimited + Bounded increments. 
\end{enumerate}

Case one is theoretically monotone. In case two, the unbounded increments could produce a timestep larger than the radius of monotonicity. In case three a non-SSP integration method has no theoretical guarantees of monotone behaviour. In case 4 without nonlinear limiting strategies, the underlying numerical method is not guaranteed to be monotone.

The numerical setup has the following parameters. We use  mesh resolution $n_x=128,n_y=128,n_t = 512$, on the space time interval $[0,1]\times[0,1]\times[0,1/2]$. We use a small stochastic basis of noise given by $(a,b) = 1/256(1,1)$. We use the discontinuous square initial condition given by
\begin{align}
q_0=\begin{cases}
        1,\quad \text{where}\quad  (x,y)\in [0.1,0.6]\times[0.1,0.6],\\
        0, \quad \text{else}.
    \end{cases}
\end{align}
For the unbounded increments, we use the increments $\Delta W\sim N(0,\Delta t)$. For the bounded increments we use the two-point bounded increments from \cref{eq:2point_random_variable}. For the timestepper, we use SSP22 \cref{method:ssp22} with a radius of monotonicity 1 or the RK2 method with no radius of monotonicity. For the limiter, we use the $N^2(K)\cup N(K)$- limiter. We plot the final time solution in \cref{fig:Burgers}, and we also plot the maximum and minimum as a function of time in \cref{fig:Burgers}.

\begin{figure}[H]
\centering
\begin{subfigure}[t]{0.24\textwidth}
\centering
\includegraphics[width=.95\textwidth]{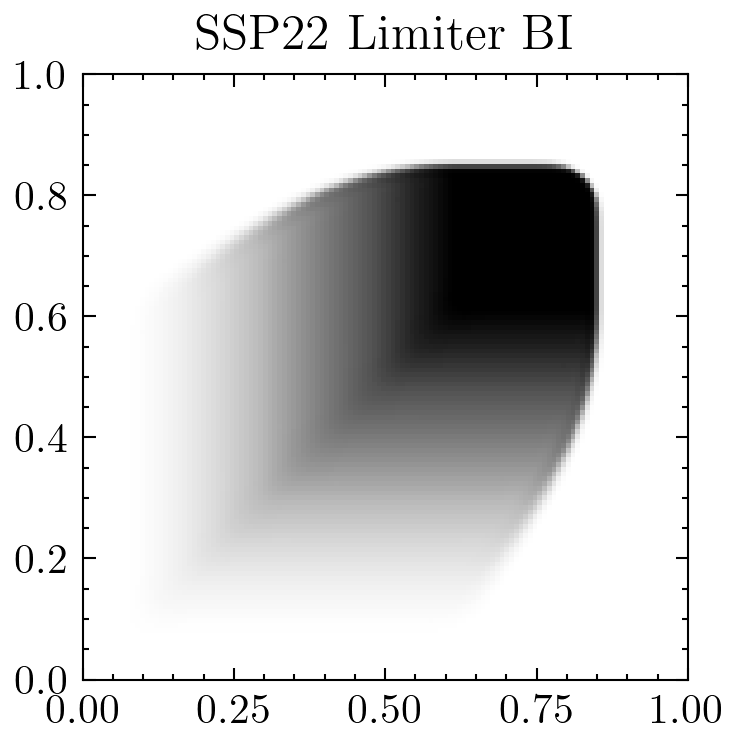}\caption{\hfill}\label{fig:SSP22_Limiter_BI_Final}
\end{subfigure}
\begin{subfigure}[t]{0.24\textwidth}
\centering
\includegraphics[width=.95\textwidth]{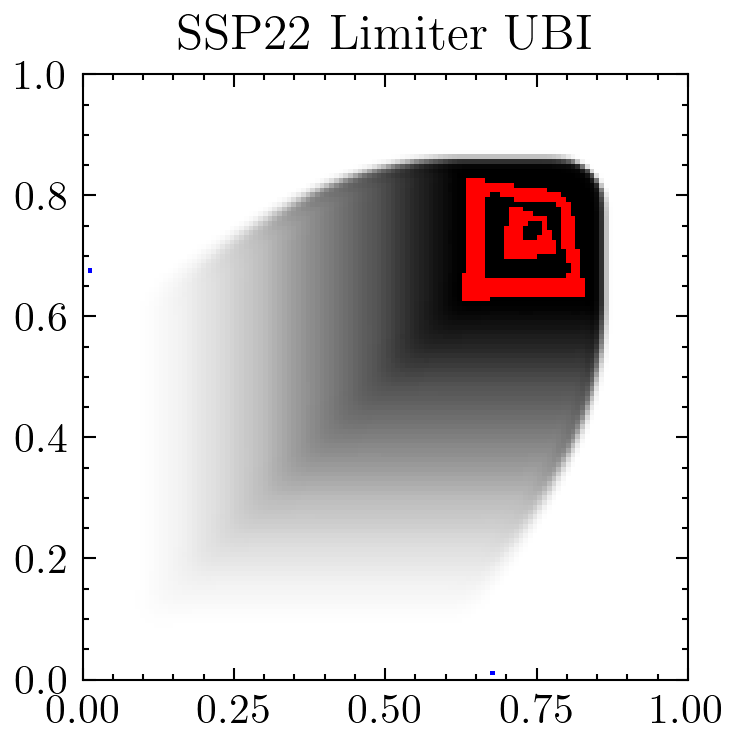}
\caption{\hfill}
\label{fig:SSP22_Limiter_UBI_Final}
\end{subfigure}
\begin{subfigure}[t]{0.24\textwidth}
\centering
\includegraphics[width=.95\textwidth]{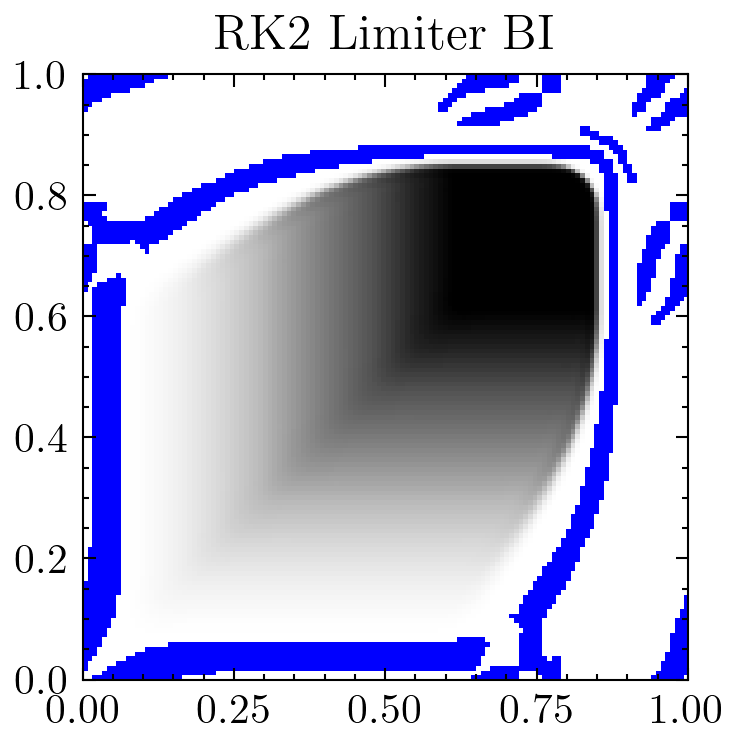}
\caption{\hfill}
\label{fig:RK2_Limiter_BI_Final}
\end{subfigure}
\begin{subfigure}[t]{0.24\textwidth}
\centering
\includegraphics[width=.95\textwidth]{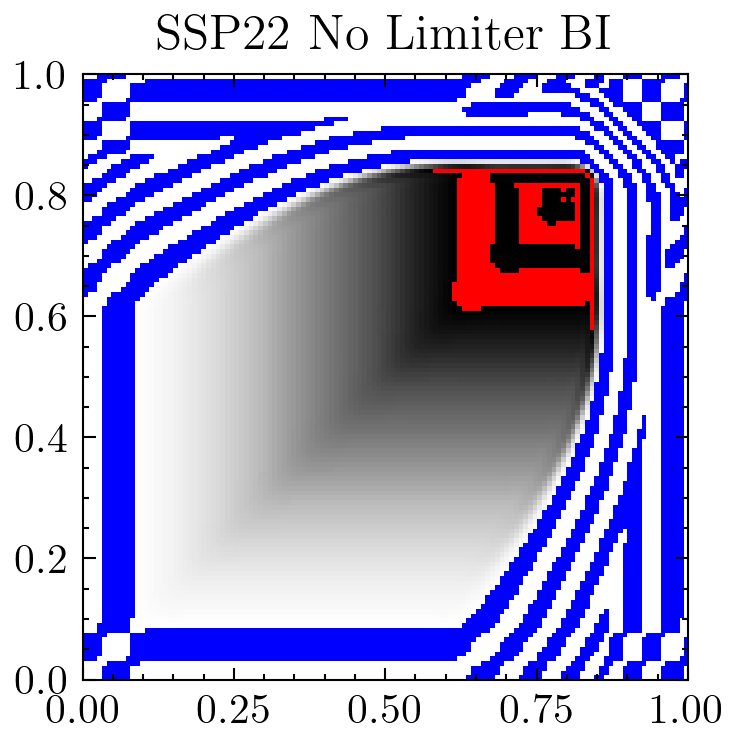}
\caption{\hfill}
\label{fig:SSP22_NO_Limiter_BI_Final}
\end{subfigure}\\
\begin{subfigure}[t]{0.24\textwidth}
\centering
\includegraphics[width=.95\textwidth]{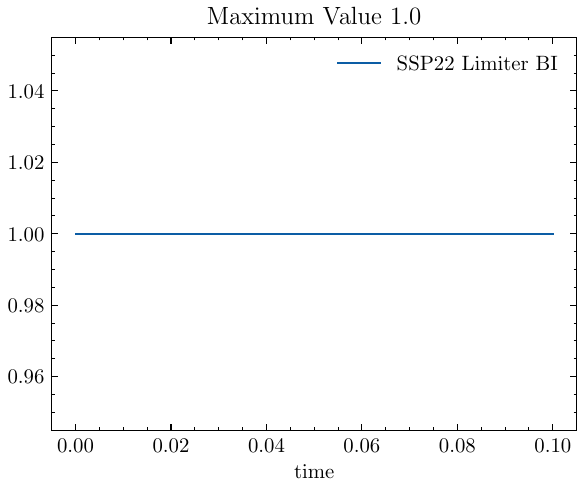}\caption{\hfill}\label{fig:SSP22_Limiter_BI_max}
\end{subfigure}
\begin{subfigure}[t]{0.24\textwidth}
\centering
\includegraphics[width=.95\textwidth]{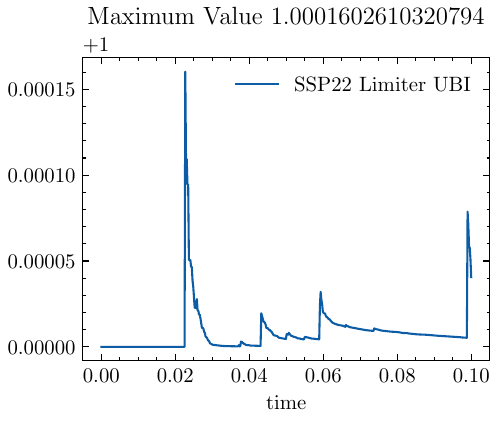}
\caption{\hfill}
\label{fig:SSP22_Limiter_UBI_max}
\end{subfigure}
\begin{subfigure}[t]{0.24\textwidth}
\centering
\includegraphics[width=.95\textwidth]{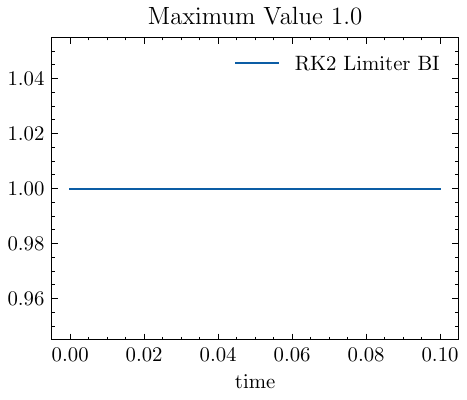}
\caption{\hfill}
\label{fig:RK2_Limiter_BI_max}
\end{subfigure}
\begin{subfigure}[t]{0.24\textwidth}
\centering
\includegraphics[width=.95\textwidth]{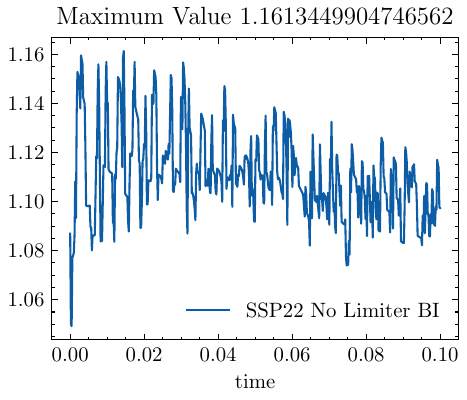}
\caption{\hfill}
\label{fig:SSP22_No_Limiter_BI_max}
\end{subfigure}\\
\begin{subfigure}[t]{0.24\textwidth}
\centering
\includegraphics[width=.95\textwidth]{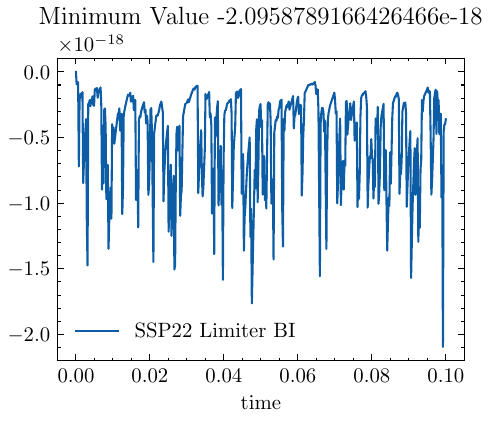}\caption{\hfill}\label{fig:SSP22_Limiter_BI_min}
\end{subfigure}
\begin{subfigure}[t]{0.24\textwidth}
\centering
\includegraphics[width=.95\textwidth]{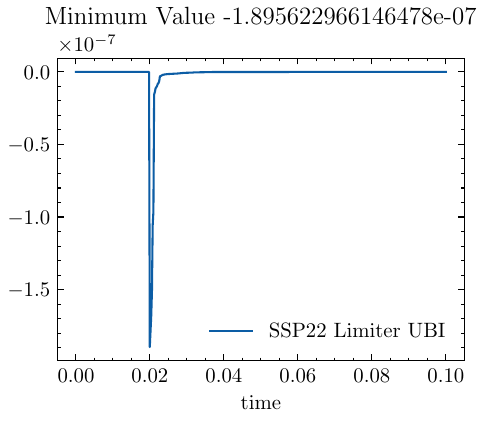}
\caption{\hfill}
\label{fig:SSP22_Limiter_UBI_min}
\end{subfigure}
\begin{subfigure}[t]{0.24\textwidth}
\centering
\includegraphics[width=.95\textwidth]{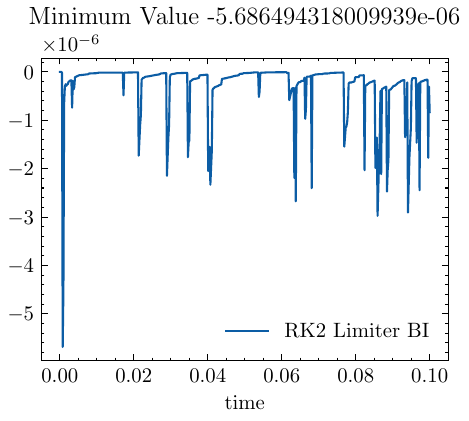}
\caption{\hfill}
\label{fig:RK2_Limiter_BI_min}
\end{subfigure}
\begin{subfigure}[t]{0.24\textwidth}
\centering
\includegraphics[width=.95\textwidth]{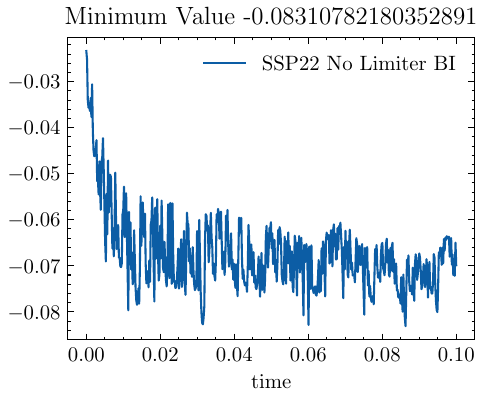}
\caption{\hfill}
\label{fig:SSP22_No_Limiter_BI_min}
\end{subfigure}
\caption{
We plot the final time plot, of the stochastic Burgers equation in row 1. Red indicates maxima over 1, blue indicates minima under 0 (to machine precision). We plot the maximum value as a function in time, in row 2. We plot the minimum value as a function in time, in row 3. In column 1 we plot the SSP22-Limiter-BI system, in column 2 we plot the SSP-Limiter with unbounded increments. In column 3 we plot the RK2 limited scheme with bounded increments. In column 4 we plot the SSP22 unlimited scheme with bounded increments.
}
\label{fig:Burgers}
\end{figure}

Results of Example 1a.  In the first row of \cref{fig:Burgers}, we plot the final timestep solution of all 4 methods. Where it is observed that SSP time integration, slope limiting and using bounded increments, were all required for a range-bounded numerical solution. In \cref{fig:SSP22_Limiter_BI_Final,fig:SSP22_Limiter_BI_max,fig:SSP22_Limiter_BI_min} we see that SSP22 with limiting and bounded increments makes no global maxima or minima violations over the entire time window. In \cref{fig:SSP22_Limiter_UBI_max,fig:SSP22_Limiter_UBI_min} we see that the SSP22 scheme with limiting using unbounded increments generated maxima and minima violations of order $10^{-7}, 10^{-4}$ respectively. In \cref{fig:RK2_Limiter_BI_max,fig:RK2_Limiter_BI_min} the RK2 Limiter with bounded increments did not generate maxima violations but generated $10^{-6}$ sized minima violations. The SSP22 scheme with no limiter with bounded increments generated $10^{-1}$ sized maxima and minima violations. In terms of the magnitude of the monotonicity violations, the absence of a slope limiter produced the largest range boundedness violations, using unbounded increments produced the second largest, and using a non-SSP timestepping method produced the smallest range boundedness violations.

\subsubsection{Example 1b: sufficient not necessary}\label{sec:Example 1b}
The previous example demonstrates practical merit to each of the three sufficient conditions for monotonic solutions. This example indicates that these conditions may not be strictly necessary in all cases. To demonstrate the practical importance of using a Stochastic SSP method, we follow \cite{ketcheson2005practical}. The RK method defined as 
\begin{align}
\b k^{(1)} &= \b q^n + a_{12} \Delta t \b f(\b q^n),\\
\b q^{n+1} &= \b q^n +b_1\Delta t \b f(\b q^n)+b_2 \Delta t\b f(\b k^{(1)}),
\end{align}
is second order when $a_{12}=\frac{1}{2\gamma}$ $b_1=1-\gamma$, $b_2=\gamma$. Its stochastic extension as a Stratonovich converging Stochastic Runge-Kutta \cref{method: Stochastic Runge-Kutta} is 
\begin{align}
\b k^{(1)} &= \b q^n + a_{12}\Delta t \b f(\b q^n)+ a_{12} G(\b q^n)\Delta \b S^n, \\
\b q^{n+1} &= \b q^n +b_1\Delta t \b f(\b q^n)+ b_1  G(\b q^n)\Delta \b S^n + b_2 \Delta t\b f(\b k^{(1)}) + b_2 G(\b k^{(1)})\Delta \b S^n.
\end{align}
Dependent on the choice of $\gamma$ the above scheme has different SSP properties. When $\gamma=1/2$ the SSP coefficient is $C=1$ and the scheme is the SSP22 scheme. When $\gamma = -1/40$ the SSP coefficient is $C=0$, and this particular scheme is often (\cite{gottlieb1998total}) used to advocate for the use of SSP methods. For this particular numerical scheme and SPDE, this $\gamma = -1/40$ method blows up entirely. Whilst, this is evidence indicating the merit of SSP time integration over non-SSP integration there may be fairer comparisons. In practice, the $\gamma = -1/40$ scheme is not implemented, it has an unusually large truncation error \cite{ketcheson2005practical}. We will instead test the schemes when $\gamma = 1/4$, $C=1/2$, and when $\gamma = 3/4$, $C=1/2$. These are numerically viable Runge-Kutta methods, in particular when $\gamma = 3/4$ the scheme has the minimum truncation error. We will use bounded increments and limiters such that the SSP22 scheme method is provably monotone, but the $\gamma = 3/4$, $\gamma = 1/4$, are not provably monotone through the SSP property as they are run slightly beyond the radius of monotonicity. The experiment setup is the same as the previous example, and the results are also presented similarly in \cref{fig:Lucky}.

\begin{figure}[H]
\centering
\begin{subfigure}[t]{0.16\textwidth}
\includegraphics[width=.95\textwidth]{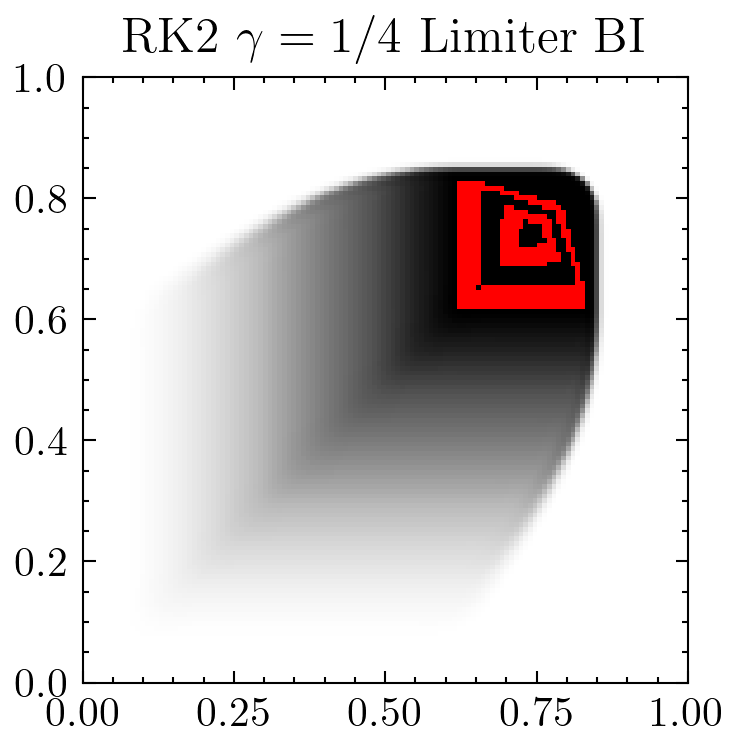}
\caption{\hfill}
\label{fig:2D_RK2_a_Limiter_BI_Final}
\end{subfigure}
\begin{subfigure}[t]{0.16\textwidth}
\includegraphics[width=.95\textwidth]{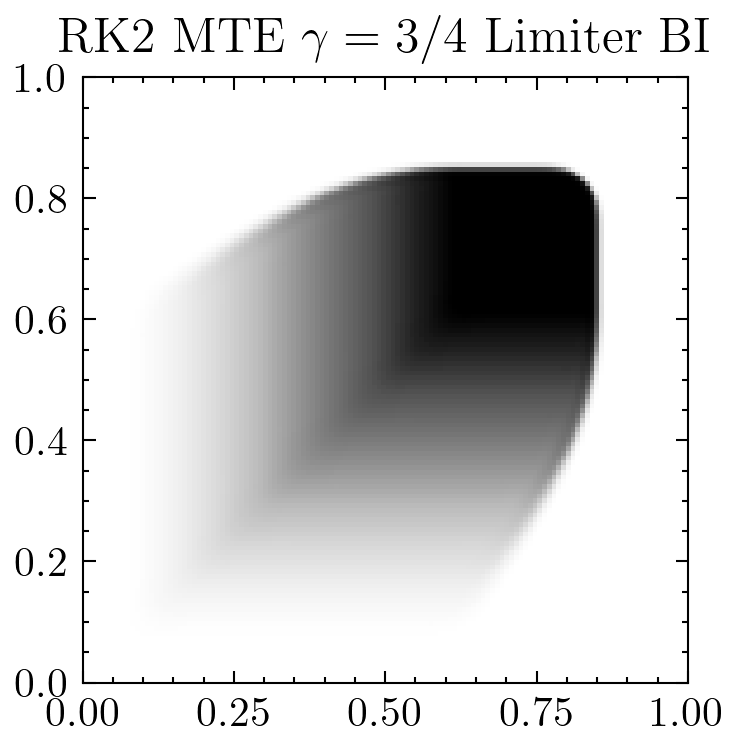}
\caption{\hfill}
\label{fig:2D_RK2_MTE_Limiter_BI_Final}
\end{subfigure}
\begin{subfigure}[t]{0.16\textwidth}
\includegraphics[width=.95\textwidth]{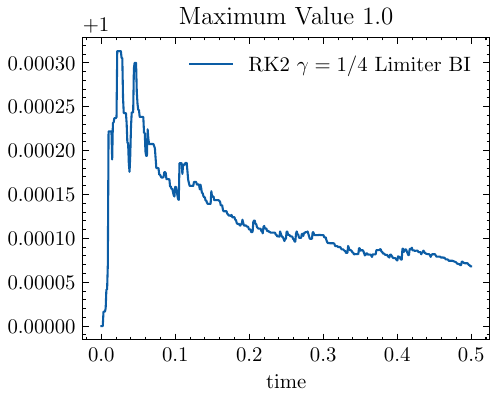}
\caption{\hfill}
\label{fig:RK2_a_Limiter_BI_max}
\end{subfigure}
\begin{subfigure}[t]{0.16\textwidth}
\includegraphics[width=.95\textwidth]{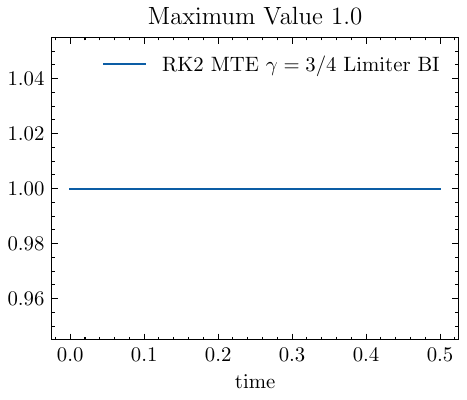}
\caption{\hfill}
\label{fig:RK2_MTE_Limiter_BI_max}
\end{subfigure}
\begin{subfigure}[t]{0.16\textwidth}
\includegraphics[width=.95\textwidth]{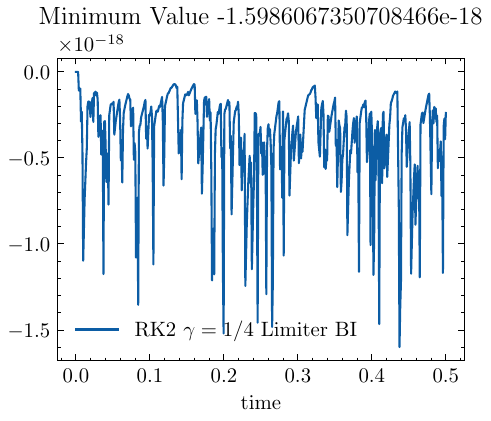}
\caption{\hfill}
\label{fig:RK2_a_Limiter_BI_min}
\end{subfigure}
\begin{subfigure}[t]{0.16\textwidth}
\includegraphics[width=.95\textwidth]{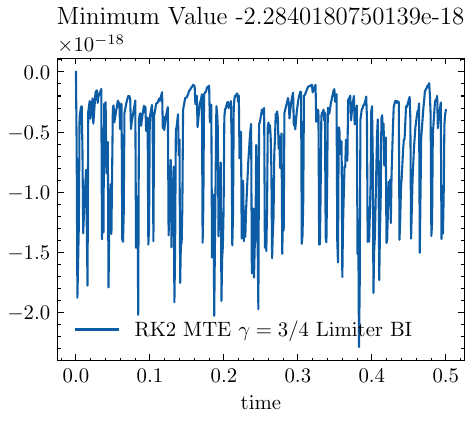}
\caption{\hfill}
\label{fig:RK2_MTE_Limiter_BI_min}
\end{subfigure}
\caption{In \cref{fig:2D_RK2_a_Limiter_BI_Final,fig:2D_RK2_MTE_Limiter_BI_Final} we plot the final time solution of the $\gamma = 1/4$, $\gamma = 3/4$ respectively. In \cref{fig:RK2_a_Limiter_BI_max,fig:RK2_MTE_Limiter_BI_max} the maximum value is plotted with time for the $\gamma = 1/4$, $\gamma = 3/4$ respectively.}\label{fig:Lucky}
\end{figure}

In \cref{fig:2D_RK2_MTE_Limiter_BI_Final,fig:RK2_Limiter_BI_max,fig:RK2_Limiter_BI_min}, one finds that for one particular scheme, in one particular example, running beyond the radius of monotonicity (using a Non-SSP timestepper) was not a strictly necessary condition for the range boundedness preservation in practice. Yet in almost the same setup with a numerical scheme with the same SSP property, the lack of the SSP property in another scheme allowed a non-monotone solution in \cref{fig:RK2_MTE_Limiter_BI_max,fig:RK2_MTE_Limiter_BI_min}. 

Similarly, one could also imagine choosing a particular finite realisation of Brownian motion sufficiently bounded that one does not observe monotonicity violations numerically. Indicating sampling from bounded distributions is not a strictly necessary condition for finite numerical examples.

\subsubsection{Example 1c: 2D Advection}\label{sec:Example 1c: 2D Advection}
In this experiment, we solve the 2D stochastic advection problem 
\begin{align}
d q + \operatorname{div}(\b u q)dt +  \operatorname{div}(\b \xi q)\circ dW= 0,\quad q(0,x) = q_0(\b x)
\end{align}
at resolution $n_x,n_y,n_t = 128,128,512$, on a $[0,1]^3$ space-time mesh subject to the  initial conditions, 
\begin{align}
	q_{0}  = 
	\begin{cases}
			1 & \text{for}\quad  r_{zal} = \sqrt{ (x-0.5)^2+(y-0.75)^2} \leq 0.15, \quad\text{and}\quad (x\leq 0.475), \\
			1 & \text{for}\quad r_{zal} \leq 0.15, \quad\text{and}\quad (x>0.525), \\
			1 & \text{for}\quad r_{zal} \leq 0.15, \quad \text{and}\quad (y\geq 0.85),\quad\text{and}\quad (0.475<x\leq 0.525), \\
			(1-\frac{r_{cone}}{0.15}) &\text{for}\quad (r_{cone} = \sqrt{(x-0.5)^2+(y-0.25)^2}\leq 0.15), \\
			\frac{1}{2}(1+\cos(\pi  \frac{r_{cos}}{0.15}) & \text{for}\quad (r_{cos} = \sqrt{(x-0.25)^2+(y-0.5)^2}\leq 0.15),\\
			0&\mbox{otherwise.} 
	\end{cases}\label{test:LeVeque} 
\end{align}
specified in 
\cite{leveque1996high}. The incompressible vector fields $\b u, \b \xi$ are specified as follows
\begin{align}
(u^x,u^y) = (-2\pi(y - 1/2) , 2\pi(x -1/2)),\quad
(\xi^x,\xi^y) = \frac{2\pi}{10}\left(x(x - 1)(2 y - 1) , -(2x - 1)y(y - 1) \right).
\end{align}

The deterministic velocity $(u^x,u^y)$ is solid body rotation, and the advection noise $(\xi^x,\xi^y)$ is deformational but incompressible.
The solution is theoretically range bounded when using the $N^2(K)\cup N(K)$-limiter \cite{woodfield2024higher} and the SSP104 \cref{method:SSP104}, with the bounded three-point increments described in \cref{eq:3point_rv}. The diffusion term in the SPDE causes the shape to deform.
We plot 16 ensemble members at the final timestep in \cref{fig:sbr104}, where range boundedness is observed in practice, as is theoretically expected.

\begin{figure}[H]
\centering
\begin{subfigure}[t]{0.1\textwidth}
\includegraphics[width=.95\textwidth]{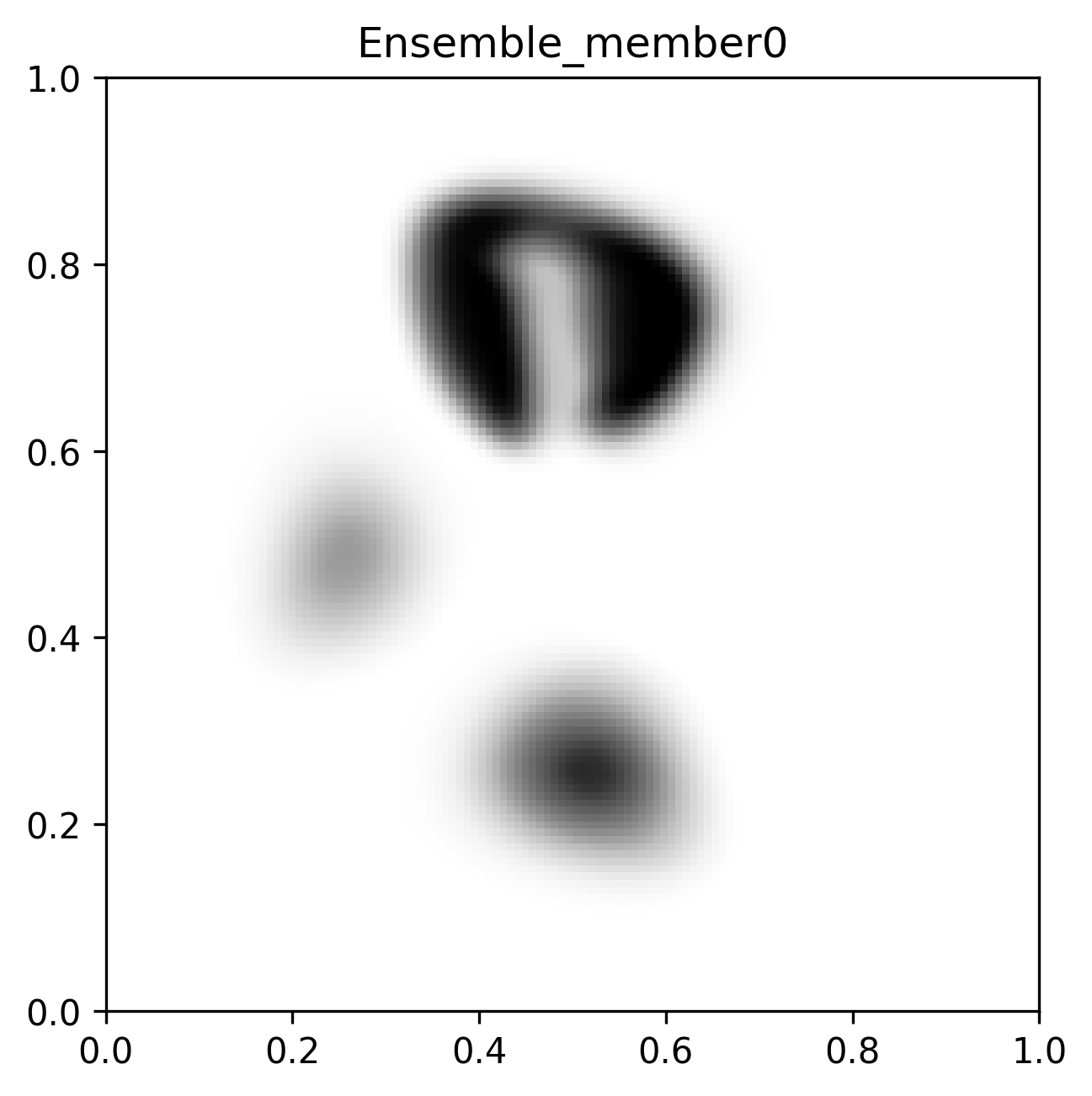}
\end{subfigure}
\begin{subfigure}[t]{0.1\textwidth}
\includegraphics[width=.95\textwidth]{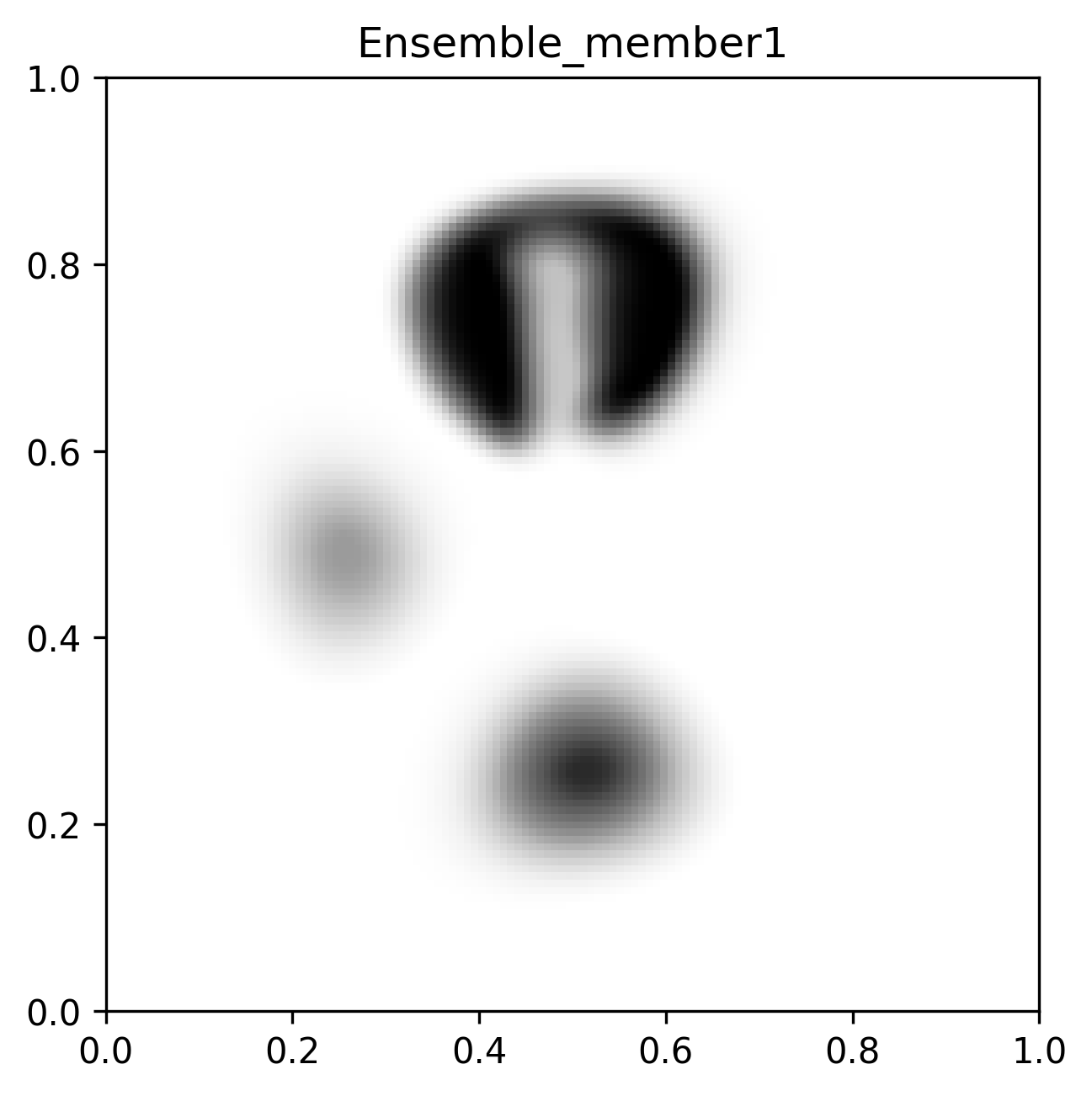}
\end{subfigure}
\begin{subfigure}[t]{0.1\textwidth}
\includegraphics[width=.95\textwidth]{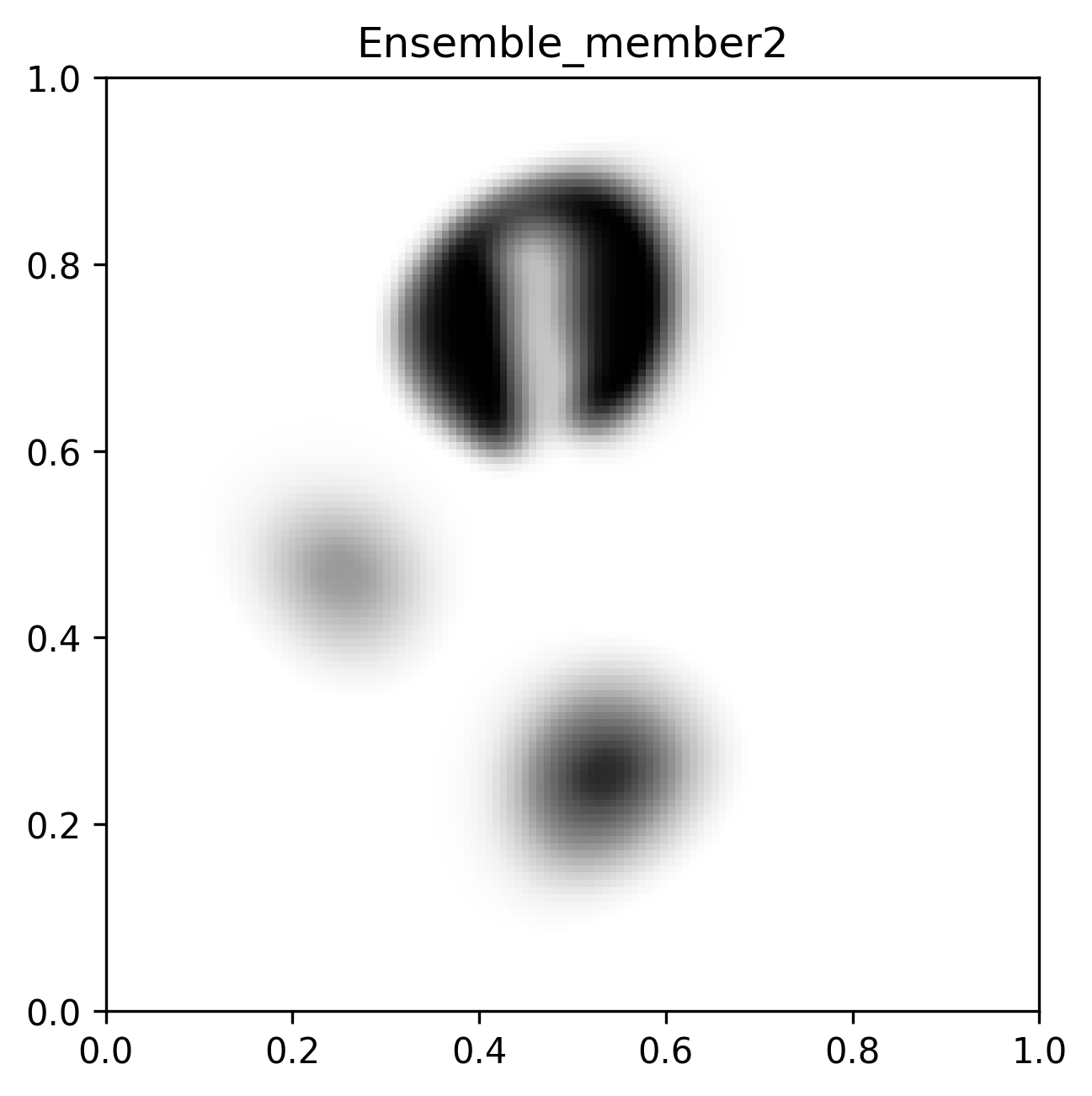}
\end{subfigure}
\begin{subfigure}[t]{0.1\textwidth}
\includegraphics[width=.95\textwidth]{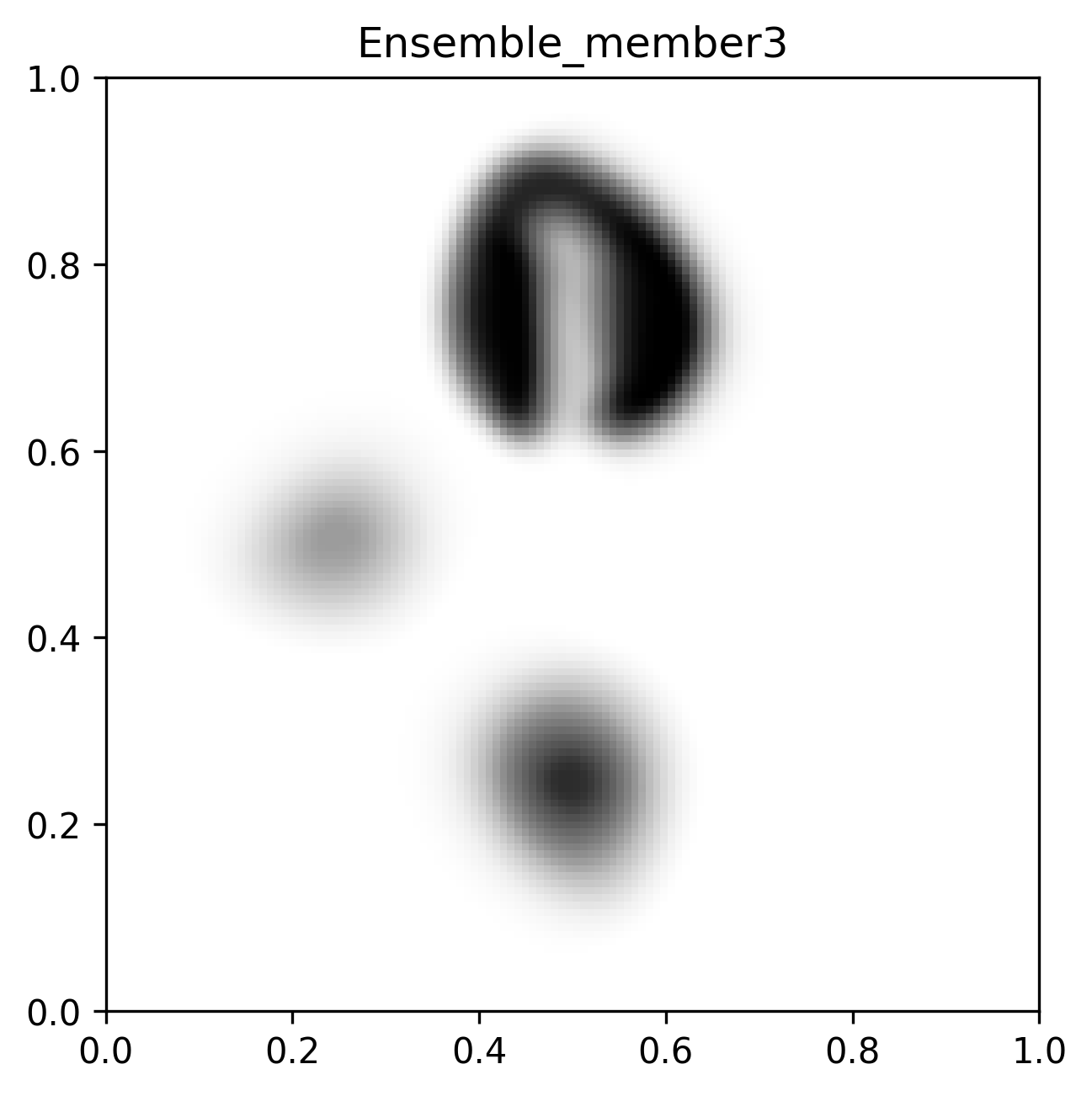}
\end{subfigure}
\begin{subfigure}[t]{0.1\textwidth}
\includegraphics[width=.95\textwidth]{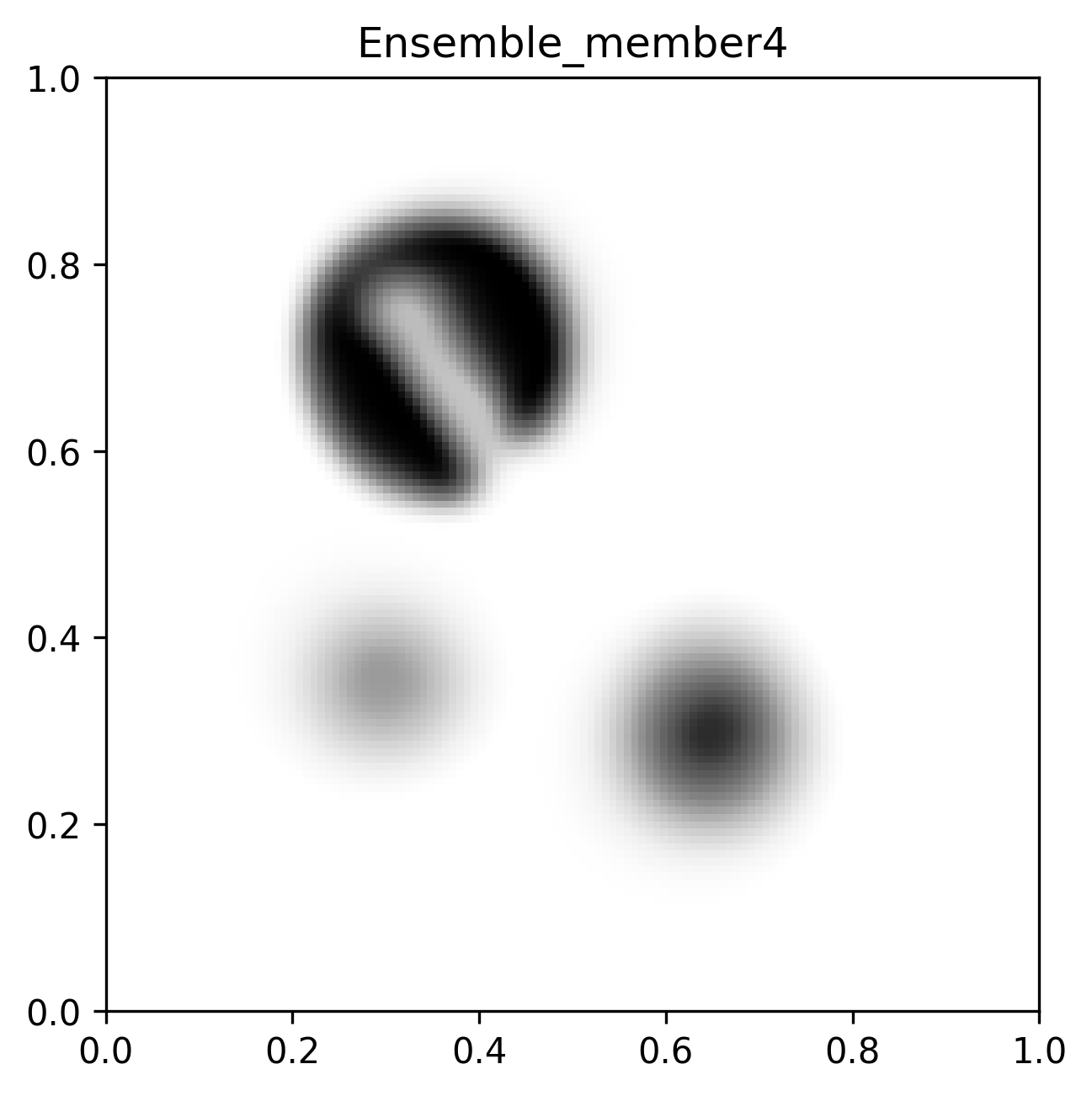}
\end{subfigure}
\begin{subfigure}[t]{0.1\textwidth}
\includegraphics[width=.95\textwidth]{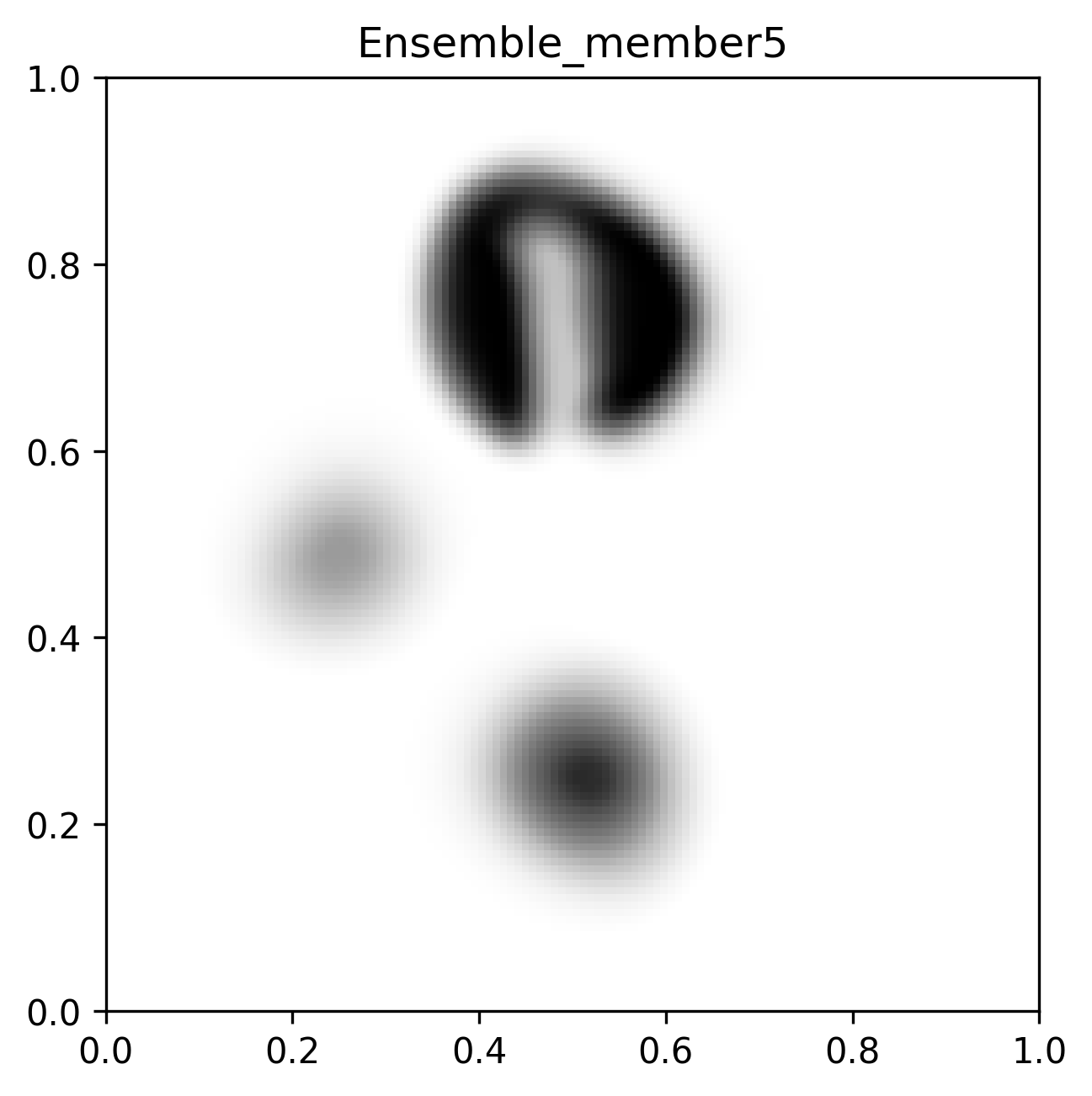}
\end{subfigure}
\begin{subfigure}[t]{0.1\textwidth}
\includegraphics[width=.95\textwidth]{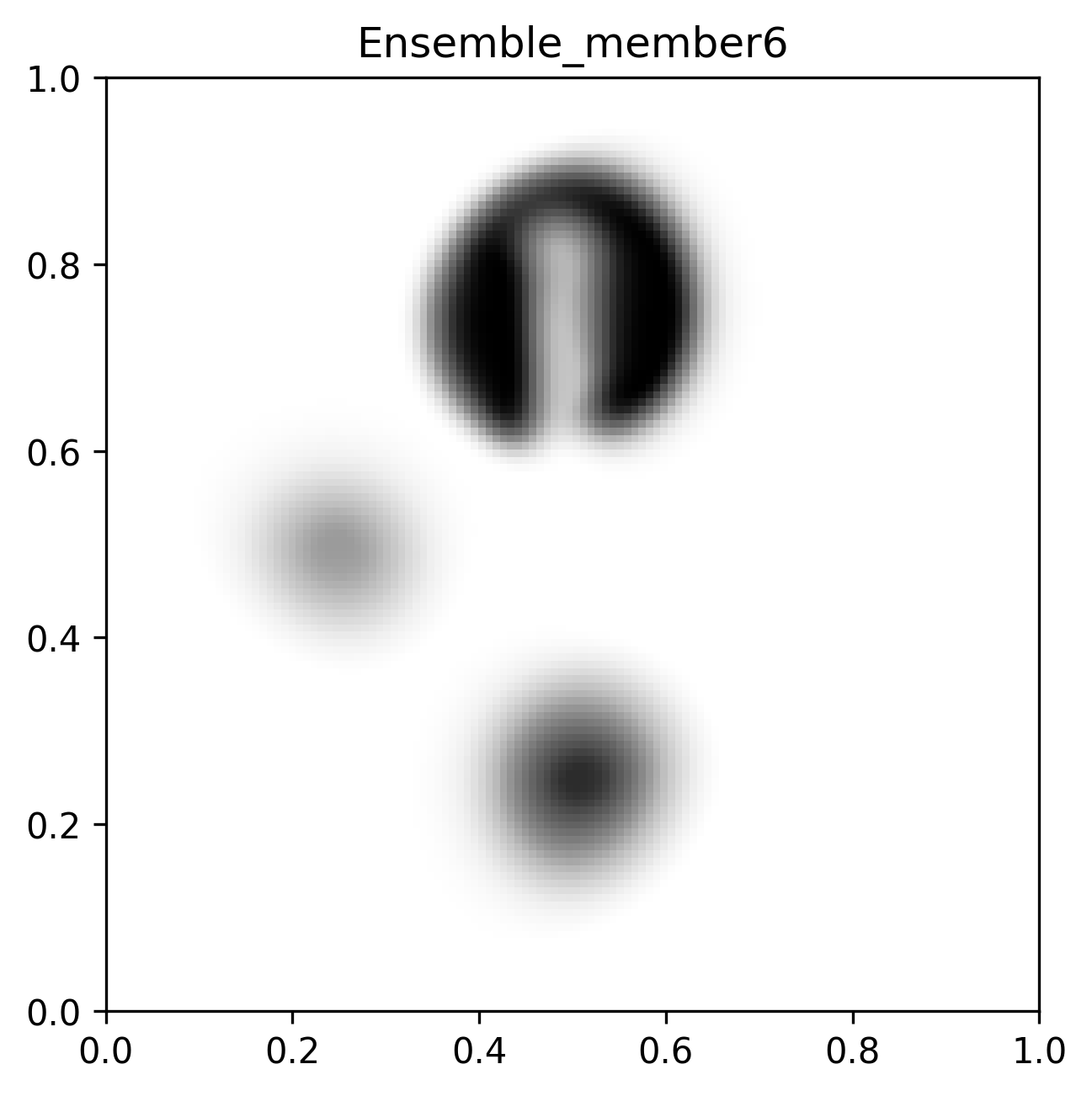}
\end{subfigure}
\begin{subfigure}[t]{0.1\textwidth}
\includegraphics[width=.95\textwidth]{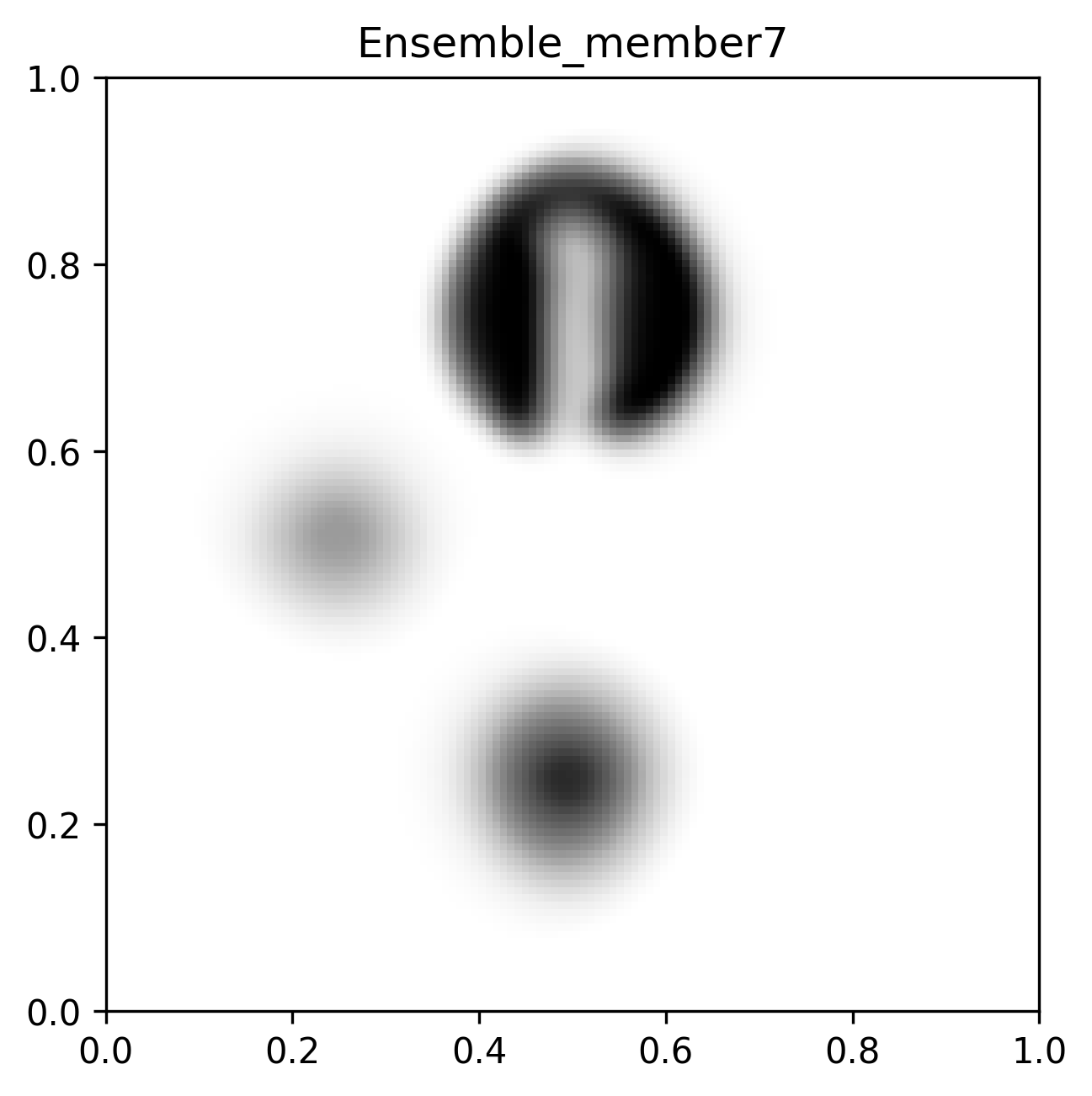}
\end{subfigure}\\
\begin{subfigure}[t]{0.1\textwidth}
\includegraphics[width=.95\textwidth]{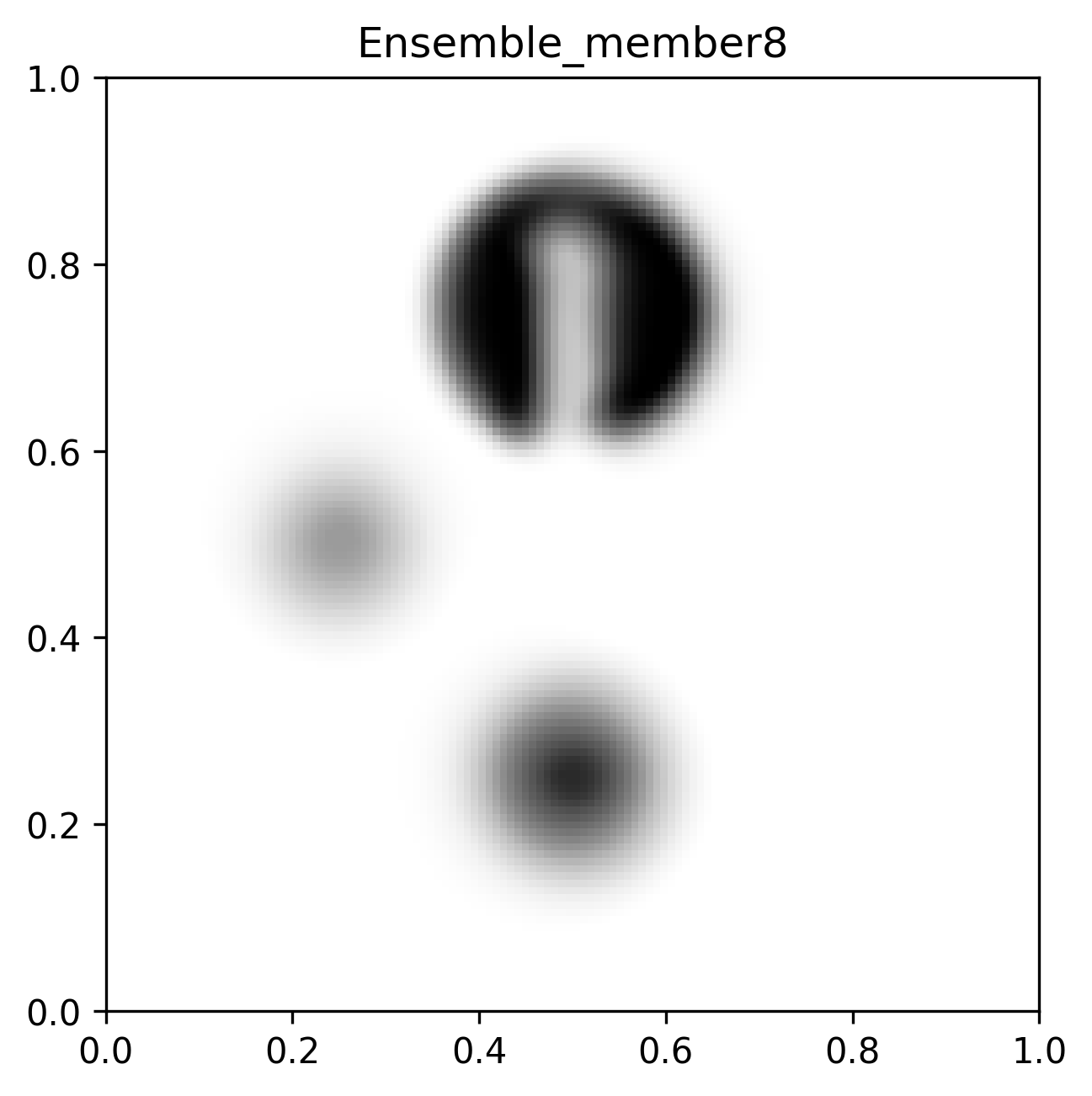}
\end{subfigure}
\begin{subfigure}[t]{0.1\textwidth}
\includegraphics[width=.95\textwidth]{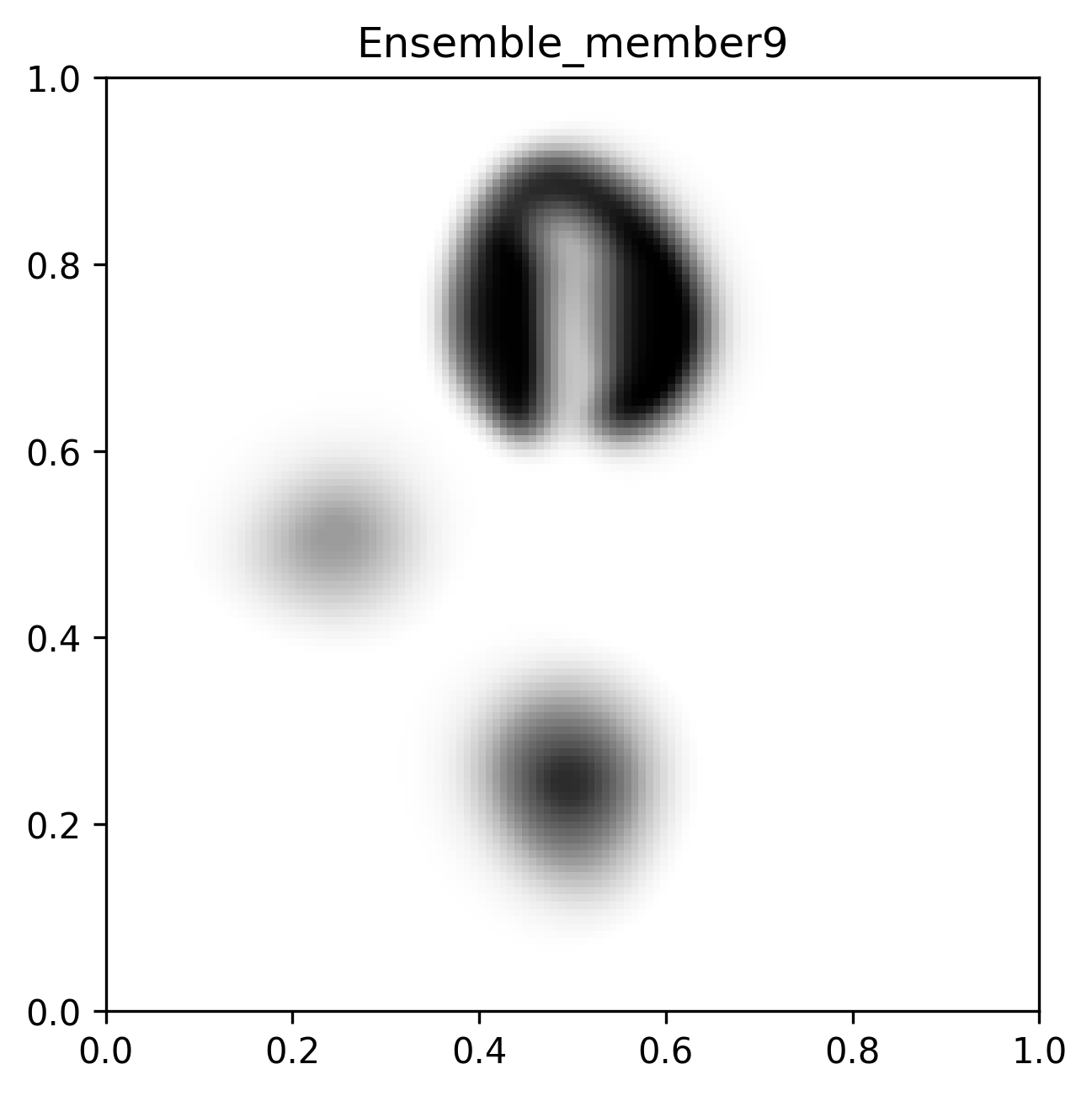}
\end{subfigure}
\begin{subfigure}[t]{0.1\textwidth}
\includegraphics[width=.95\textwidth]{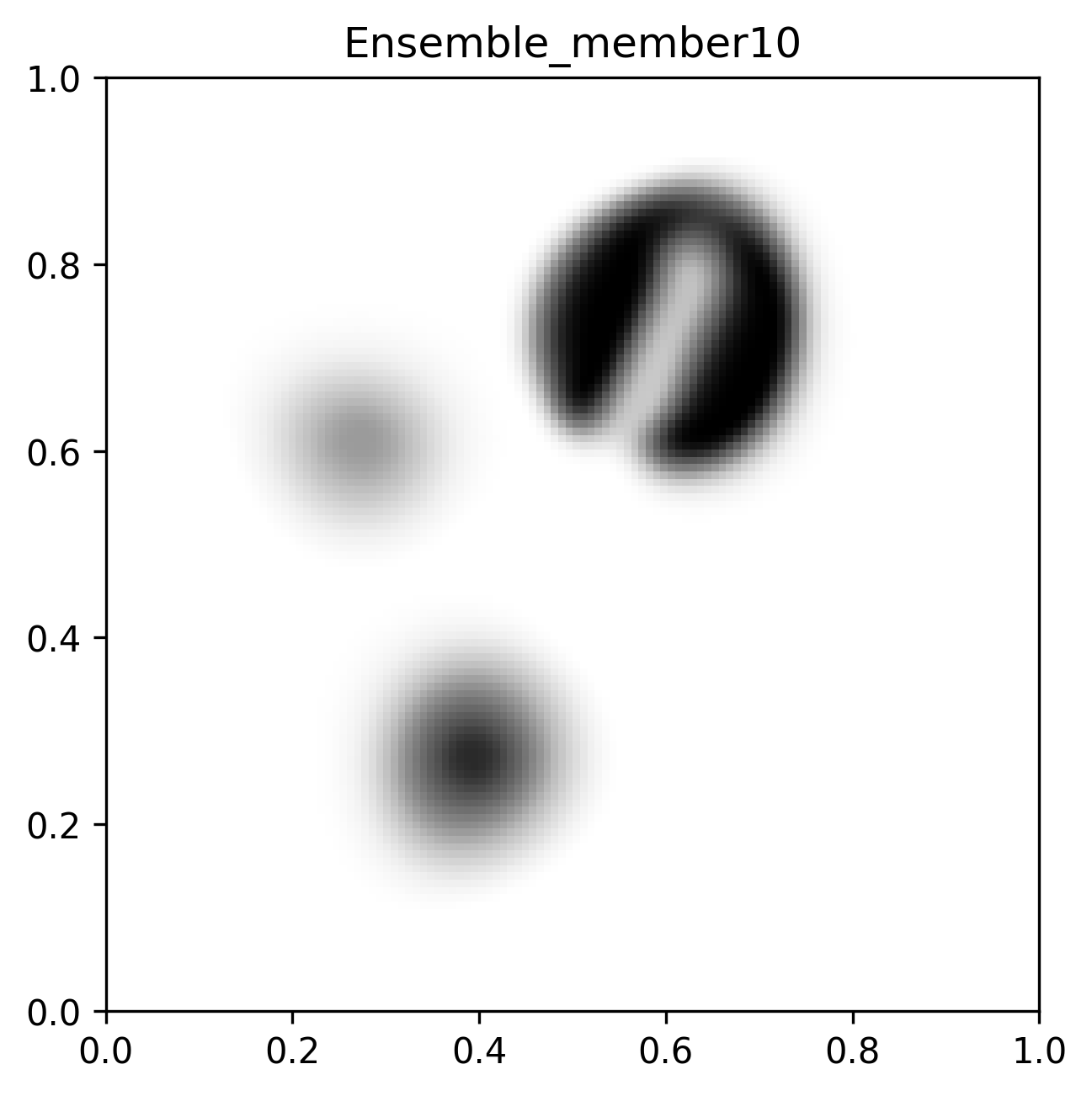}
\end{subfigure}
\begin{subfigure}[t]{0.1\textwidth}
\includegraphics[width=.95\textwidth]{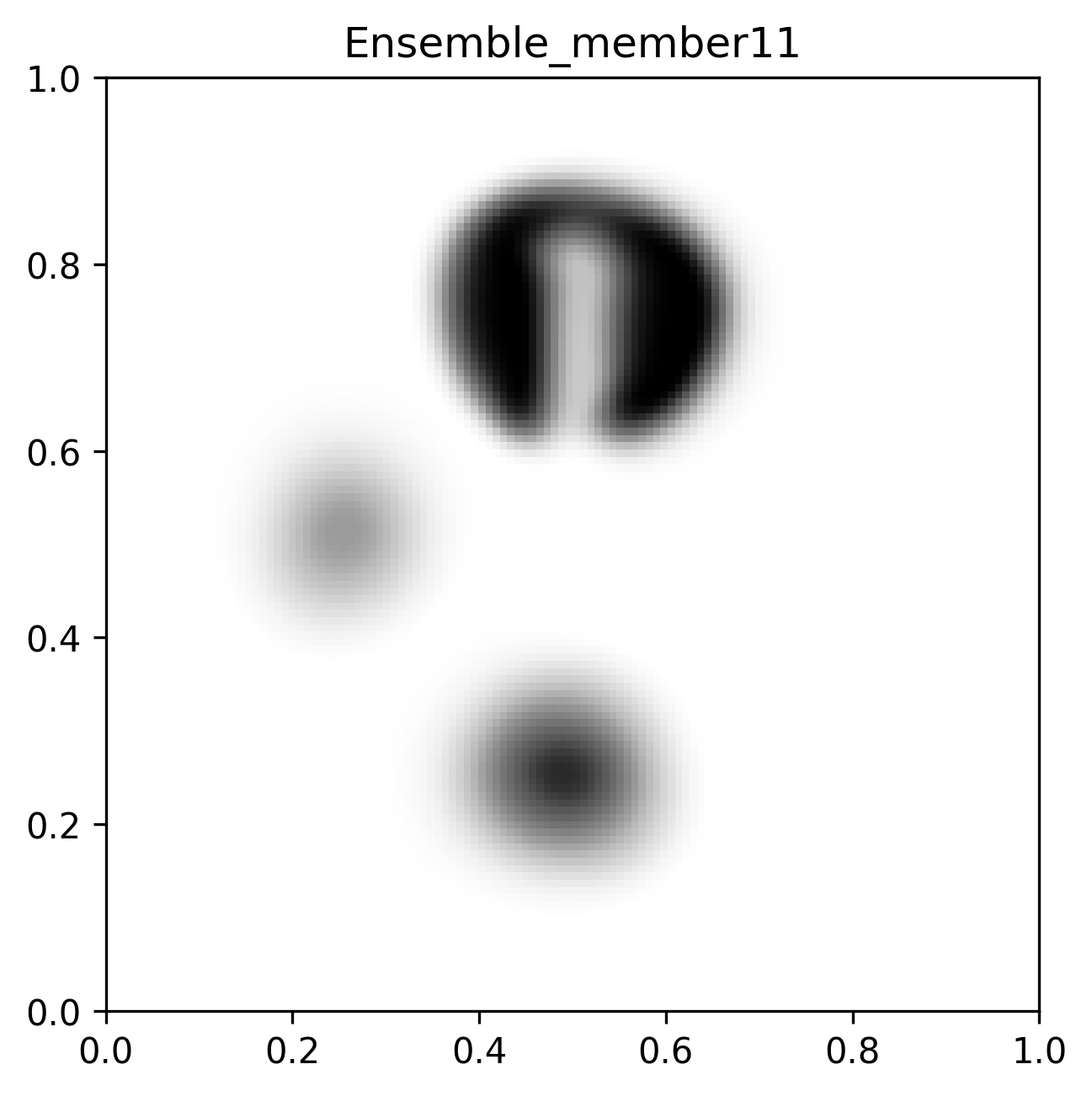}
\end{subfigure}
\begin{subfigure}[t]{0.1\textwidth}
\includegraphics[width=.95\textwidth]{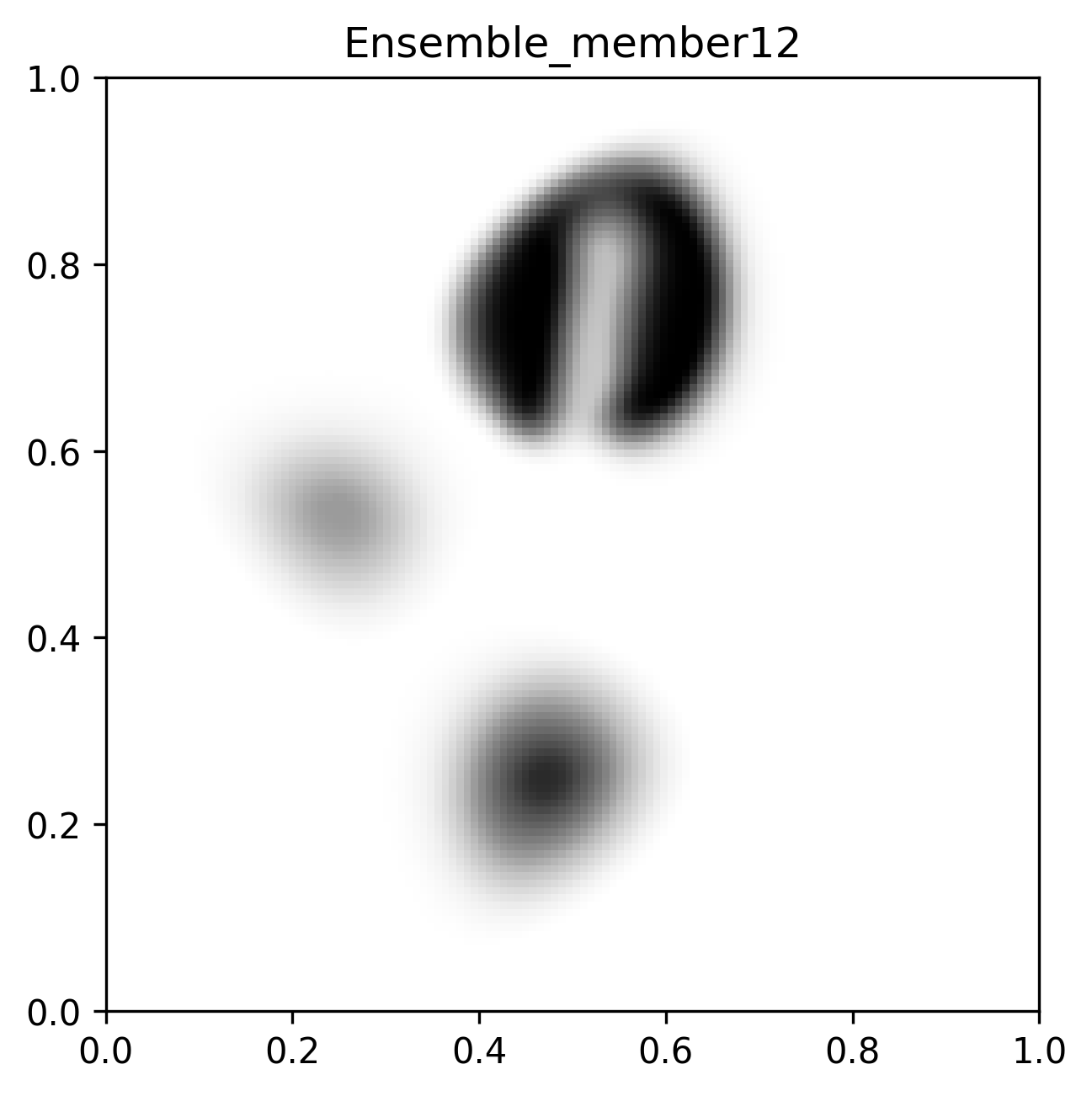}
\end{subfigure}
\begin{subfigure}[t]{0.1\textwidth}
\includegraphics[width=.95\textwidth]{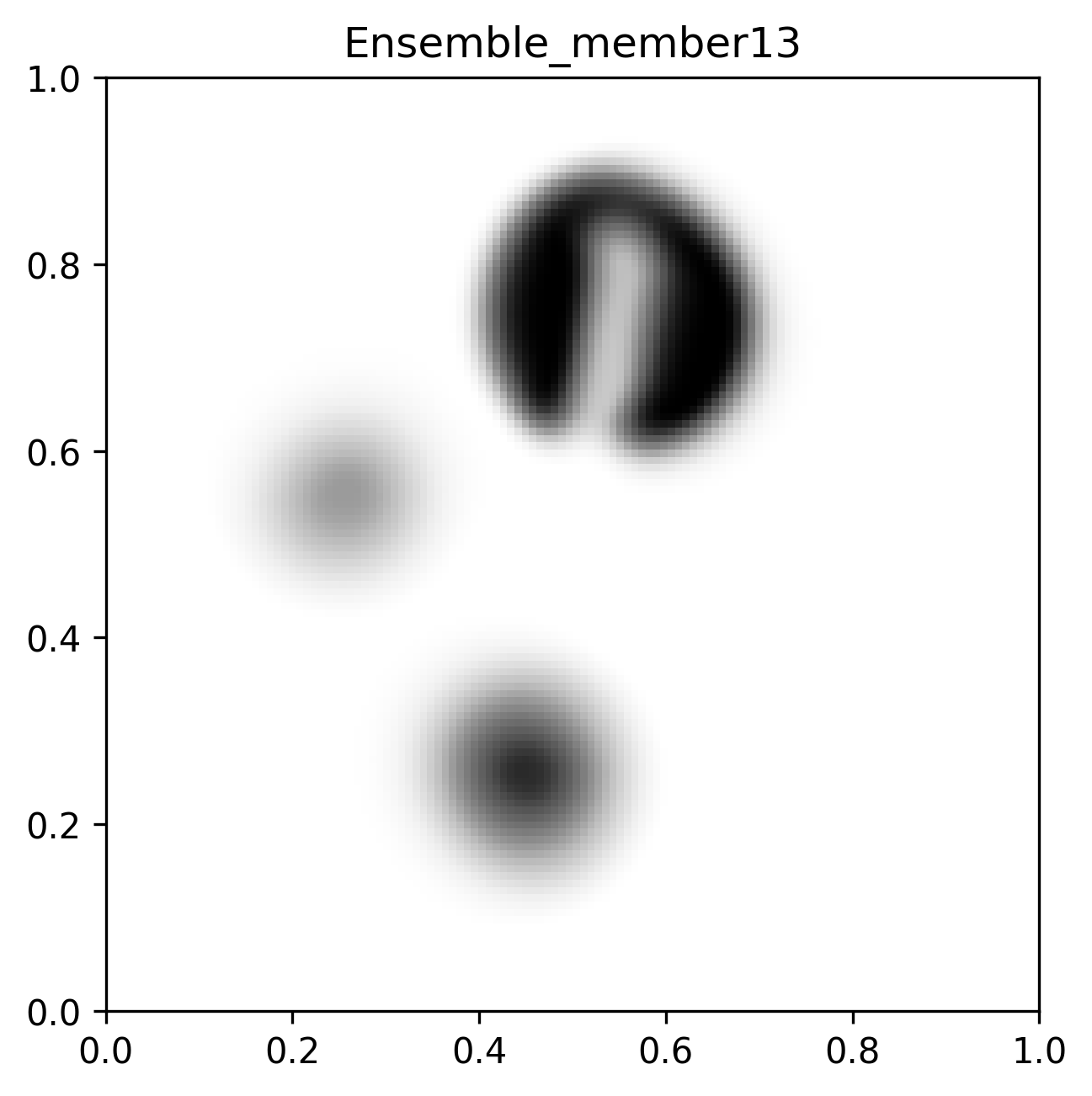}
\end{subfigure}
\begin{subfigure}[t]{0.1\textwidth}
\includegraphics[width=.95\textwidth]{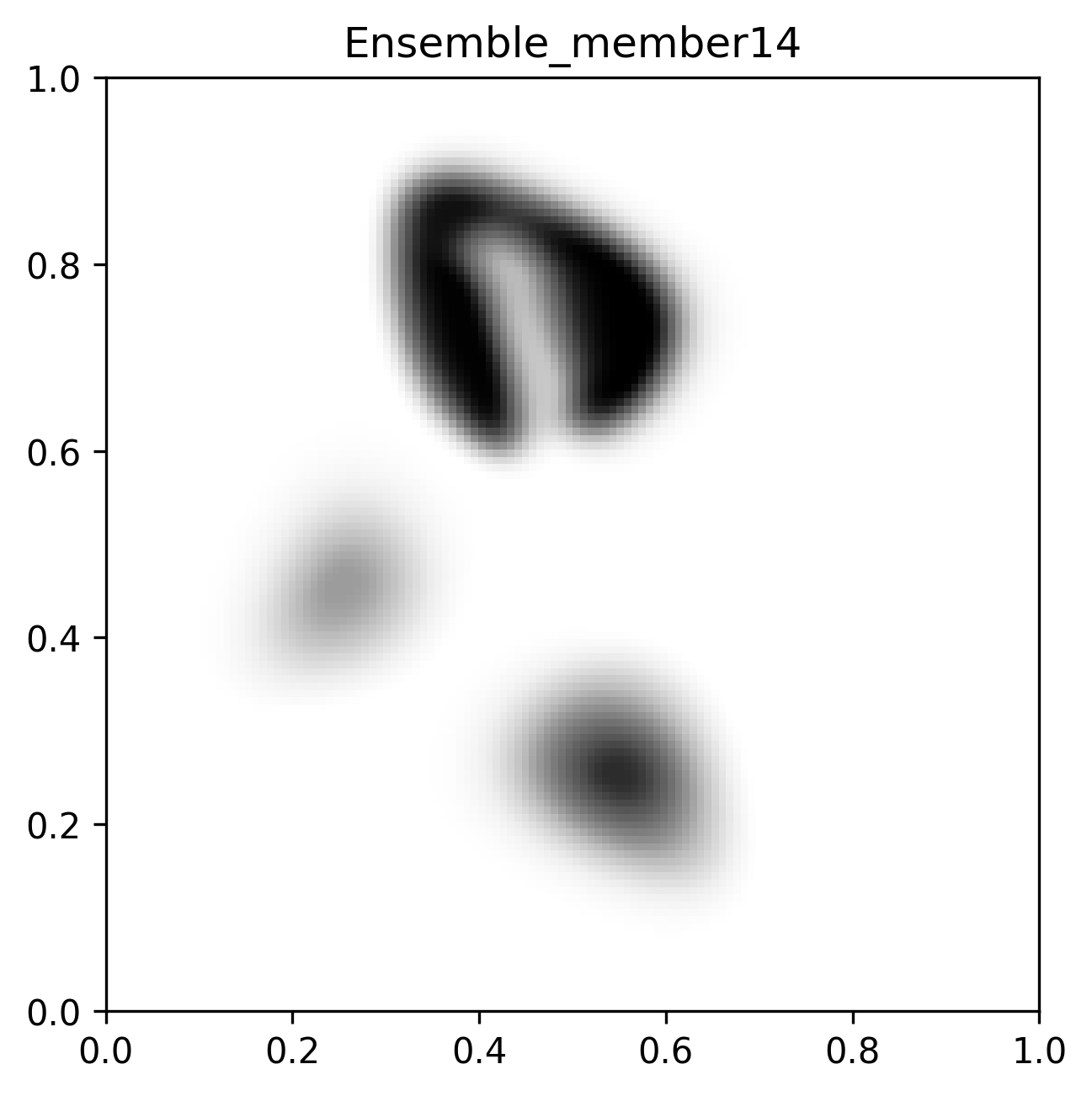}
\end{subfigure}
\begin{subfigure}[t]{0.1\textwidth}
\includegraphics[width=.95\textwidth]{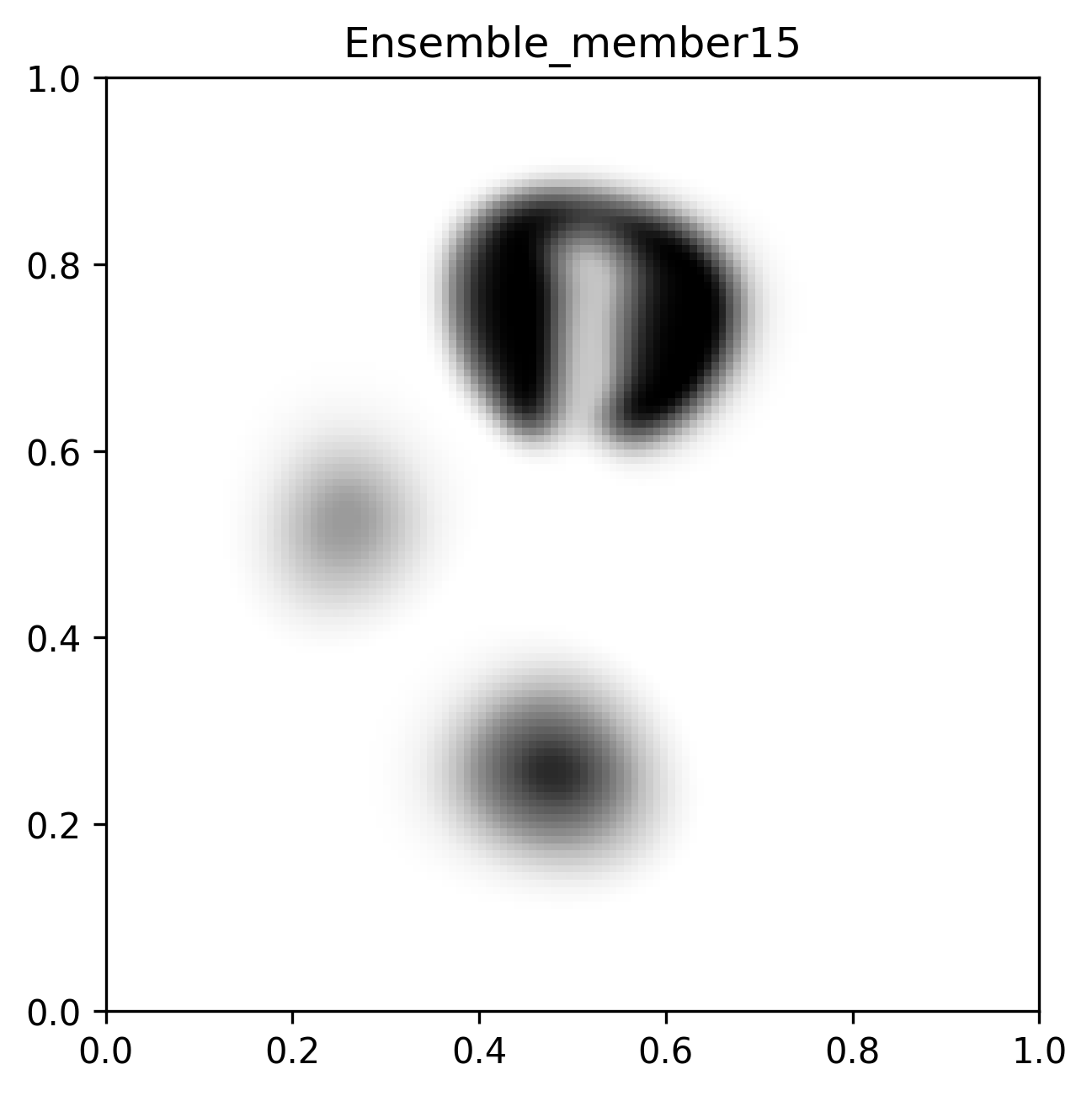}
\end{subfigure}
\caption{We plot 16 ensemble member solutions of the final timestep on a perceptually uniform grey colour scale between [0,1] undershoots are in blue overshoots are in red (there are no over/undershoots).}
\label{fig:sbr104}
\end{figure}

\subsubsection{Example 1d: Incompressible Euler}\label{Example 1d: Incompressible Euler}
In this experiment we solve the incompressible 2D Euler equation, subject to SALT (Stochastic Advection by Lie Transport)-noise \cite{holm2015variational}, the SPDE in vorticity form is given by
\begin{align}
d q+\operatorname{div}(\b u q) + \sum_{p=1}^{P}\div(\b \xi_p q) \circ dW^{p} &= 0,\quad \b u = - \nabla^{\perp} \psi,\quad \psi = -\Delta^{-1}q,\quad q(0,\b x) = q_0(\b x).
\end{align}
Where the stream-functions specifying the vector fields $\b \xi_p = -\nabla^{\perp} \Psi_p$, are chosen to be
\begin{align}
\Psi_{p}(x,y) &= 0.0001 \sin(2 p \pi x) \sin(2 p \pi y),\quad p = \lbrace 0,1,...,7 \rbrace.
\end{align}
Like the deterministic 2D incompressible Euler's equation, this SPDE preserves the local conservation of vorticity and remains bounded in $||\omega||_{L^{\infty}}$, and also possesses a discrete local maximum principle. To numerically locally preserve the vorticity and maintain a discrete local maximum principle for the EM scheme one can employ a standard C-Grid formulation with one-dimensional slope limiters (for incompressible flow) in \cite{woodfield2024new}. This is described below.
We solve the elliptic problem spectrally,
\begin{align}
\psi_{ij}^n = -\mathcal{F}^{-1}\left[(k_x^2 + k_y^2)^{-1}\mathcal{F}(q_{ij}^n) \right], 
\end{align}
where the Nihilist frequency is set manually to account for division by zero, and $2\pi i$ is absorbed into the wavenumber. The value of the streamfunction, is modified as follows $\psi = \psi  +\sum_{p=1}^{P} \Psi_p \Delta \widetilde{W}/\Delta t$. The cell-centered values are then projected to the cell corners
\begin{align}
\psi_{i+1/2,j+1/2}^n = (\psi_{i,j}^n+\psi_{i,j+1}^n+\psi_{i+1,j}^n+\psi_{i+1,j+1}^n)/4.
\end{align}
The normal component of the velocities at faces are computed using a standard C-grid approach 
\begin{align}
(u_{i+1/2,j}^n,v_{i,j+1/2}^n) = \left(- (\psi_{i,j+1/2}^n - \psi_{i,j-1/2}^n)/\Delta y, (\psi_{i+1/2,j}^n - \psi_{i-1/2,j}^n)/\Delta x \right).
\end{align}
Then one uses the advection algorithm described in \cite{woodfield2024higher} for the Euler-Maruyama step, with one-dimensional slope limiters. This described Euler Maruyama flow map (with bounded increments) provably has a discrete local maximum principle
\begin{align}
q_{i,j}^{n+1}\in [m_{i,j},M_{i,j}], \quad m_{i,j}:=\min_{(a,b) \in S_{i,j}} q^{n}_{(a,b)},\quad M_{i,j}:=\max_{(a,b) \in S_{i,j}} q^{n}_{(a,b)},
\end{align}
where $S_{i,j} = \lbrace (i,j),(i+1,j),(i,j+1),(i-1,j),(i,j-1)\rbrace$ is a 5point stencil. In this test case, we use the Le-Veque initial condition \cref{test:LeVeque}, and solve for
$E = 8$ ensemble members over the space time domain $[0,1]\times[0,1]\times [0,16]$, with periodic boundary conditions. The equation is solved at resolution $512\times 512\times 8192$, using the bounded increments in \cref{eq:3point_rv} and the SSP33 timestepping  \cref{method:ssp33}. The one dimensional slope limiter used is the Differentiable$(r)$ limiter specified in \cite{woodfield2024new}, suitable for strictly incompressible flows.

\begin{figure}[H]
\centering
% \begin{subfigure}[t]{0.1\textwidth}
% \includegraphics[width=.95\textwidth]{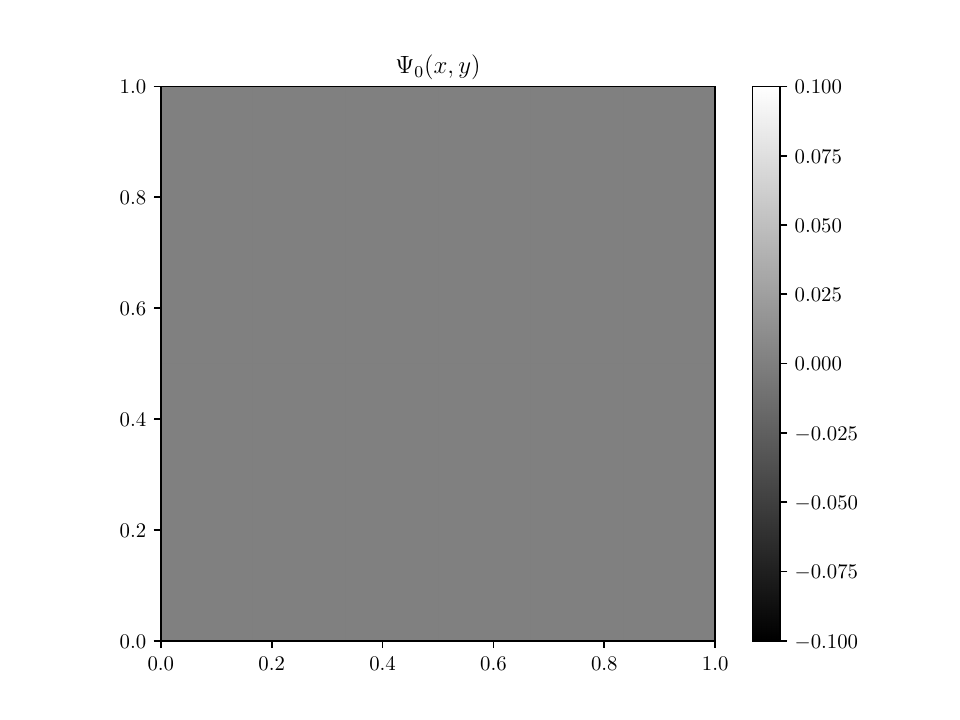}
% \end{subfigure}
% \begin{subfigure}[t]{0.1\textwidth}
% \includegraphics[width=.95\textwidth]{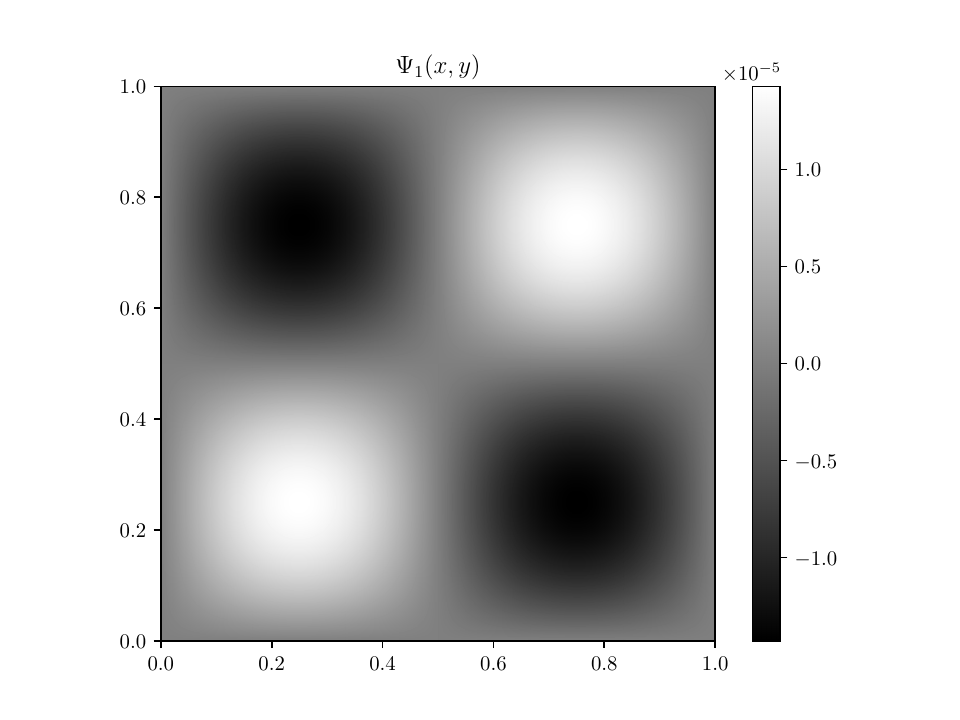}
% \end{subfigure}
% \begin{subfigure}[t]{0.1\textwidth}
% \includegraphics[width=.95\textwidth]{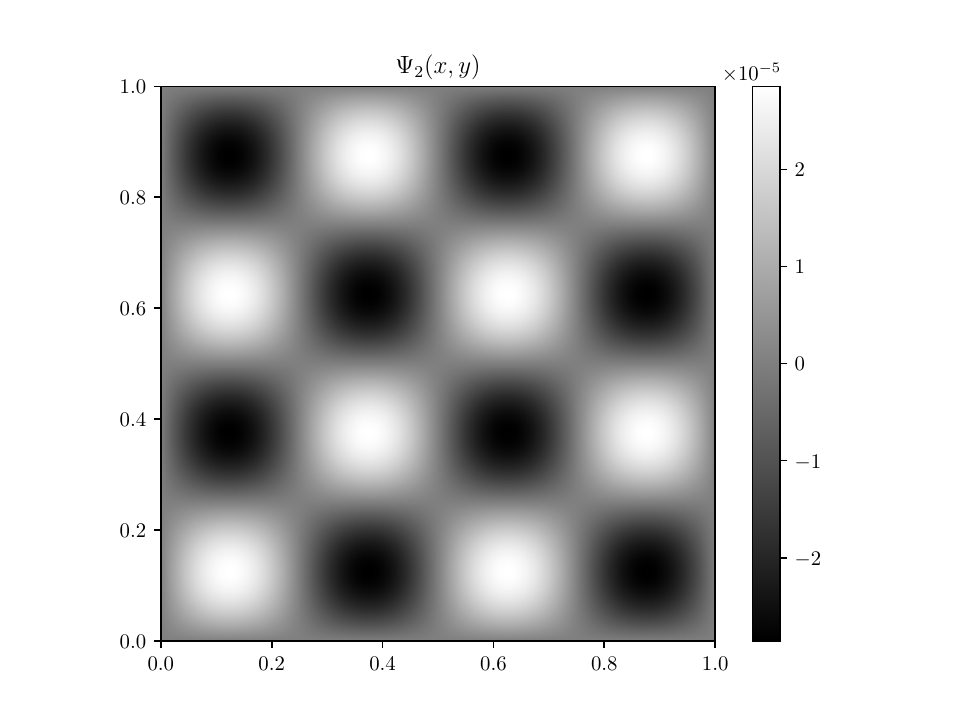}
% \end{subfigure}
% \begin{subfigure}[t]{0.1\textwidth}
% \includegraphics[width=.95\textwidth]{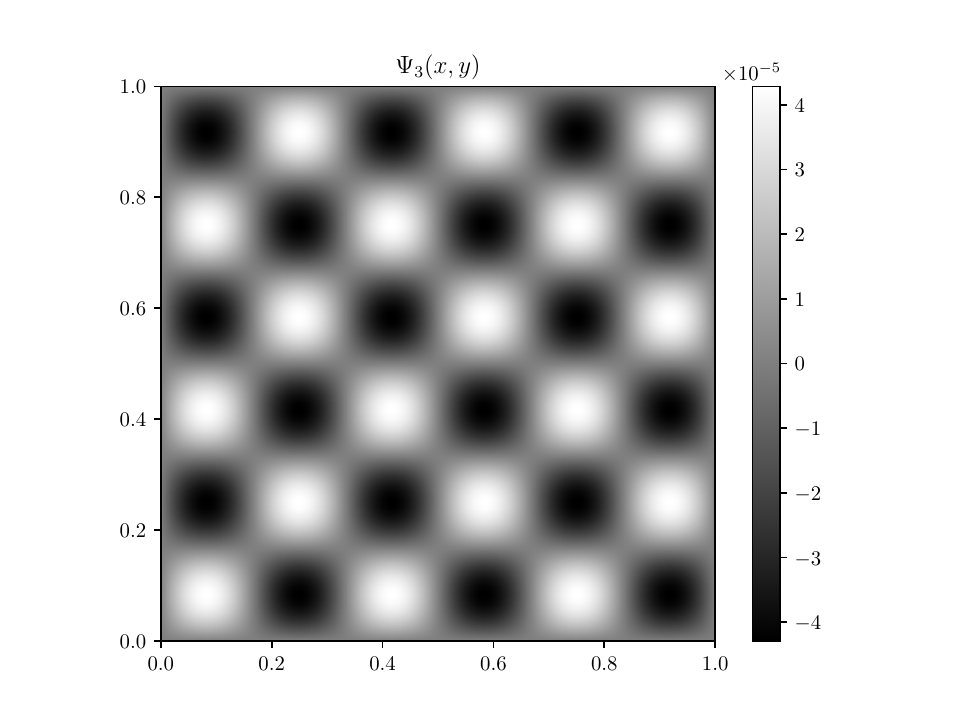}
% \end{subfigure}
% \begin{subfigure}[t]{0.1\textwidth}
% \includegraphics[width=.95\textwidth]{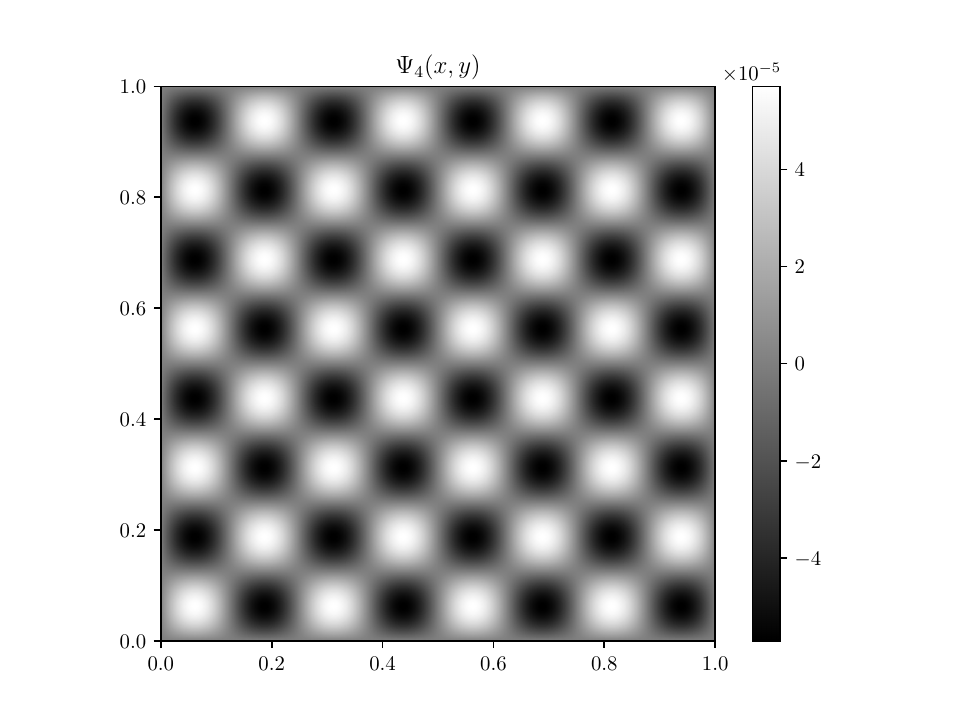}
% \end{subfigure}
% \begin{subfigure}[t]{0.1\textwidth}
% \includegraphics[width=.95\textwidth]{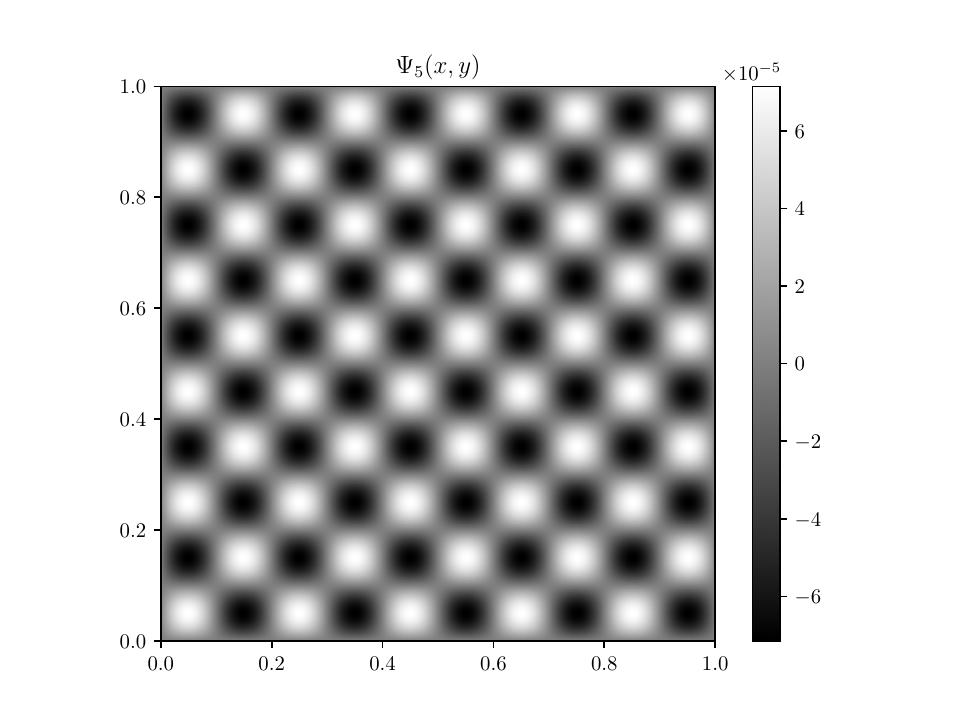}
% \end{subfigure}
% \begin{subfigure}[t]{0.1\textwidth}
% \includegraphics[width=.95\textwidth]{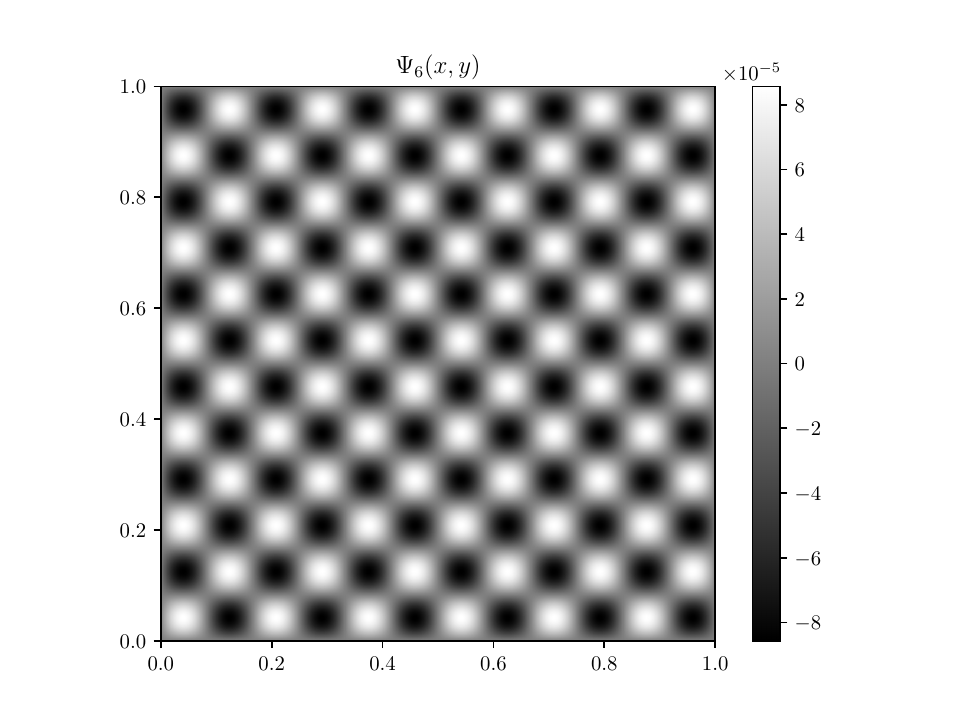}
% \end{subfigure}
% \begin{subfigure}[t]{0.1\textwidth}
% \includegraphics[width=.95\textwidth]{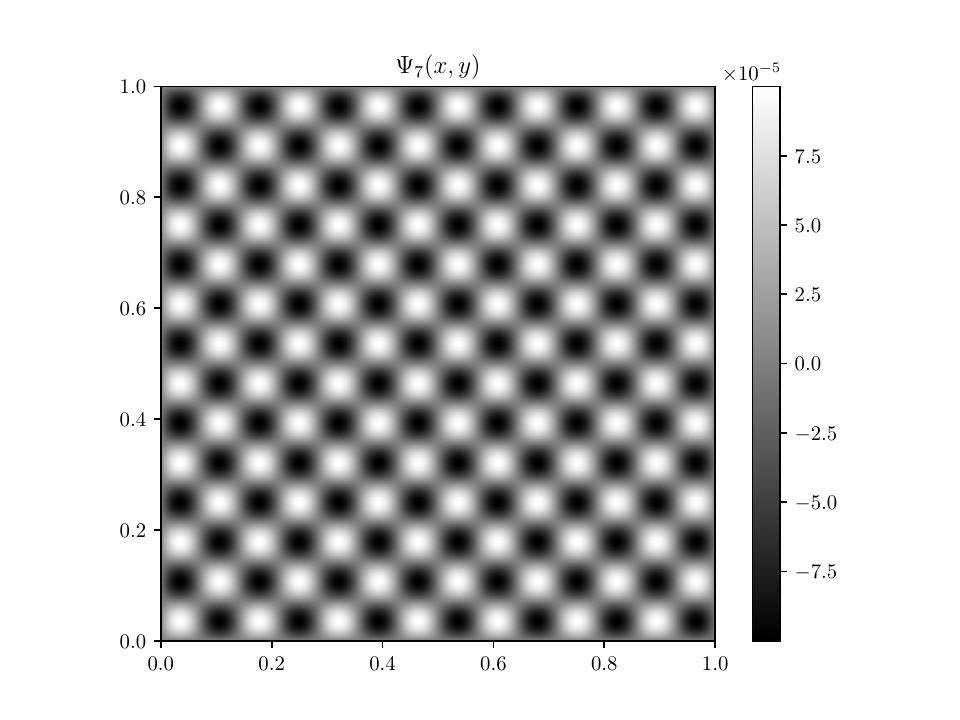}
% \end{subfigure}\\
\begin{subfigure}[t]{0.19\textwidth}
\includegraphics[width=.95\textwidth]{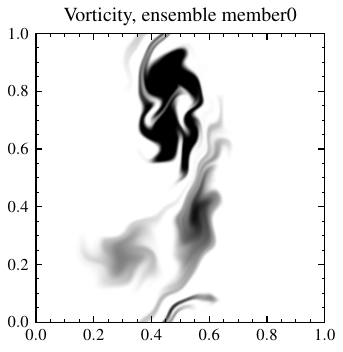}
\end{subfigure}
\begin{subfigure}[t]{0.19\textwidth}
\includegraphics[width=.95\textwidth]{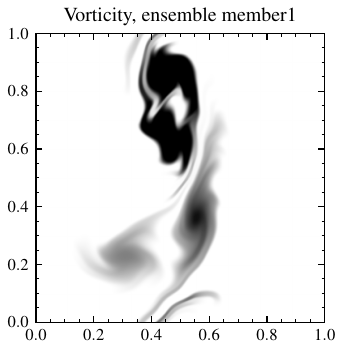}
\end{subfigure}
\begin{subfigure}[t]{0.19\textwidth}
\includegraphics[width=.95\textwidth]{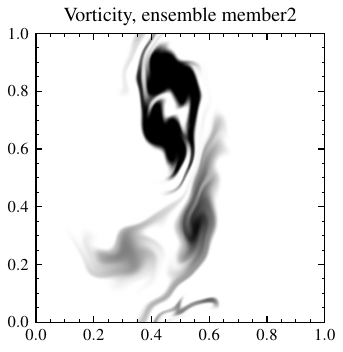}
\end{subfigure}
\begin{subfigure}[t]{0.19\textwidth}
\includegraphics[width=.95\textwidth]{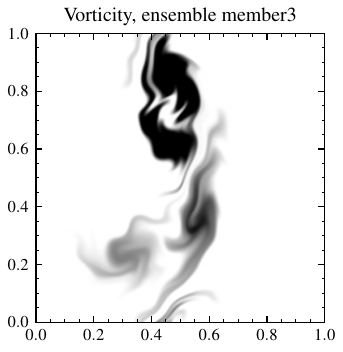}
\end{subfigure}
\begin{subfigure}[t]{0.19\textwidth}
\includegraphics[width=.95\textwidth]{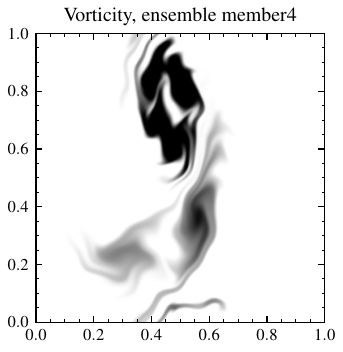}
\end{subfigure}\\
\begin{subfigure}[t]{0.19\textwidth}
\includegraphics[width=.95\textwidth]{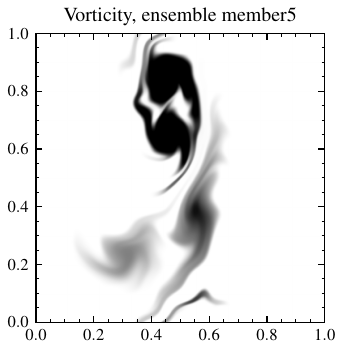}
\end{subfigure}
\begin{subfigure}[t]{0.19\textwidth}
\includegraphics[width=.95\textwidth]{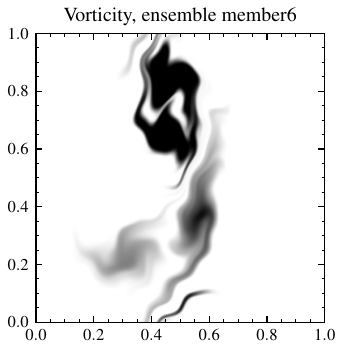}
\end{subfigure}
\begin{subfigure}[t]{0.19\textwidth}
\includegraphics[width=.95\textwidth]{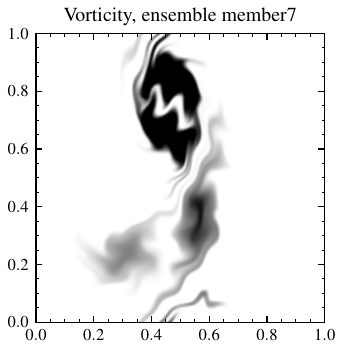}
\end{subfigure}
\begin{subfigure}[t]{0.19\textwidth}
\includegraphics[width=.95\textwidth]{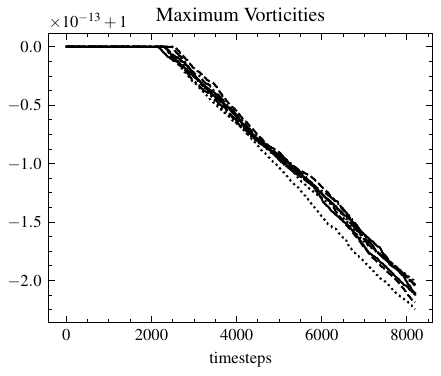}
\end{subfigure}
\begin{subfigure}[t]{0.17\textwidth}
\includegraphics[width=.95\textwidth]{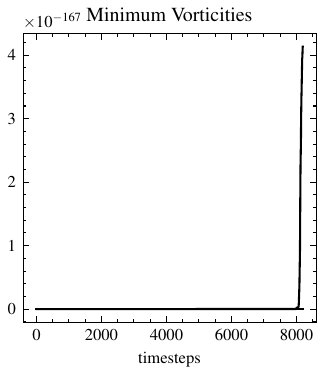}
\end{subfigure}
\caption{Stochastic-2D Euler. We plot 8 ensemble member solutions of the final timestep on a perceptually uniform grey colorscale between [0,1] undershoots are shown in blue and overshoots are shown in red (there are no over/undershoots). Maximums and minima of all ensemble members are plotted and remain bounded. }
\label{fig:salt_euler}
\end{figure}

\begin{figure}[H]
\centering
\begin{subfigure}[t]{0.29\textwidth}
\includegraphics[width=.95\textwidth]{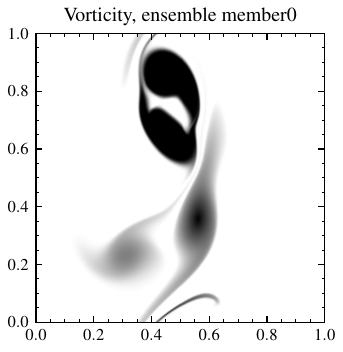}
\end{subfigure}
\begin{subfigure}[t]{0.28\textwidth}
\includegraphics[width=.95\textwidth]{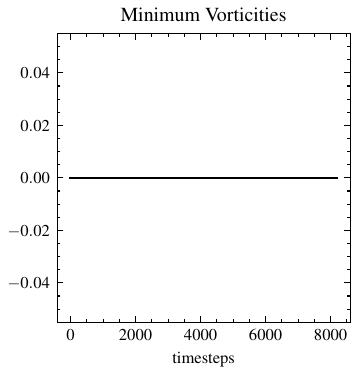}
\end{subfigure}
\begin{subfigure}[t]{0.33\textwidth}
\includegraphics[width=.95\textwidth]{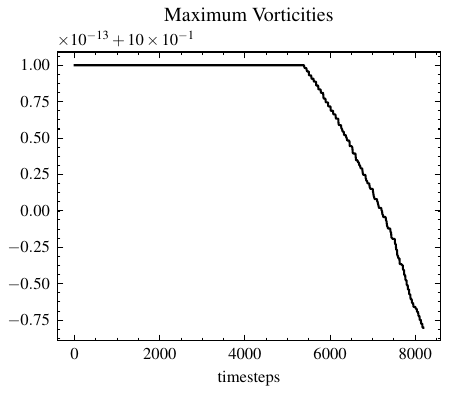}
\end{subfigure}
\caption{Deterministic 2D Euler. We plot a 1 ensemble member solution of the deterministic incompressible Euler equation at the final timestep on a perceptually uniform grey colorscale between [0,1] undershoots are shown in blue overshoots are shown in red (there are no over/undershoots). Maximums and minima are also plotted as a function of timestep.}
\label{fig:deterministic_euler}
\end{figure}

In \cref{fig:deterministic_euler}, we plot the solution to deterministic 2D Euler equation, where the slotted cylinder, cone and cosine lump rotate about themselves and one another. In \cref{fig:salt_euler} we plot 8 ensemble members of the solution to the SALT-2D-Euler equation, the stochastic solutions in \cref{fig:salt_euler} had additional small-scale dynamics due to the stochastic transport term. In \cref{fig:salt_euler}  all ensemble members remained range-bounded. As expected when using an EM scheme with a probable monotone property, bounded increments and an SSPRK method run below the radius of monotonicity.
Such stochastic transport models have been proposed for uncertainty quantification in \cite{holm2015variational,cotter2020particle}. However, range boundedness or notions of monotonicity have not been achieved numerically and may be advantageous. 

\subsubsection{Example 1e: Operator-Splitting GARK}\label{sec:example:1e operator splitting GARK}
Consider the following Burgers equation with transport type noise, 
\begin{align}
    d q_t+\left( (\frac{1}{2}q^2)_x + (\frac{1}{2}q^2)_y \right) dt + \operatorname{div}\left( \sum_{p=1}^{P}(\b \xi_{p}(\b x)q)\right)\circ dW_t= 0.
\end{align}
Designing an approximate Reimann solver suitable for such an SPDE is nontrivial, particularly when the basis of noise $\b \xi(\b x)$ varies in space. As a result, it may be difficult to construct a monotone numerical flux function and it may not be possible to prove that the Euler Maruyama scheme has a monotone property. Therefore, we have no monotonicity property for the SSP-SRK \cref{method: Stochastic Runge-Kutta} to inherit from the EM scheme. 

This example demonstrates how one can instead use the SSP-SGARK \cref{method: Stochastic Generalised Additive Runge-Kutta} method and operator splitting to split the problem into sub-problems. Both sub-problems 
\begin{align}
    d_t q + f dt = 0,\\
    d_t q + G \circ d\b W = 0,
\end{align}
can be solved with monotonic properties, whereas the combination may not (due to the lack of a well-defined approximate Reimann solver). The nonlinear term
\begin{align}
f = \left( (\frac{1}{2}q^2)_x + (\frac{1}{2}q^2)_y \right)
\end{align}
is treated as a deterministic drift and the well-established deterministic Godunov's flux \cref{method:godunov-method} is employed in each spatial direction. Whereas the advection operator \begin{align}
G \circ d\b S^p=\operatorname{div}\left( \sum_{p=1}^{P}(\b \xi_{p}(\b x)q)\right)\circ d\b S^p
\end{align}
will be treated with an established flux form upwind advection algorithm on a C-grid.  Then we can apply SGARK or SARK methodology to retain both monotonic properties, without trying to develop a Stochastic approximate Reimann solver as naively attempted previously. The Sequential operator splitting \Cref{method:SSP- Sequential Operator Splitting} was used, with $m=1$, and $n=4$, the three-point bounded random variable \cref{eq:3point_rv} was used for the increments, and the FV2 scheme with $N^2(K)\cup N(K)$-mp limiter was used for both subproblems. 

In this setup, we use the initial conditions in \cref{test:LeVeque} and use a single incompressible vector field,
\begin{align}
(\xi^x_1,\xi^y_2) = (2 \pi \sin(8 \pi x)\cos(8\pi y)/8,2 \pi \cos(8 \pi x) \sin(8 \pi y)/8 )
\end{align} 
which can generate local deformations at a specific length scale. The resolution is $128\times 128\times 256$, the space time domain is $[0,1]\times[0,1]\times[0,1/6]$, the boundary conditions are periodic.

\begin{figure}[H]
\centering
\begin{subfigure}[t]{0.1\textwidth}
\includegraphics[width=.95\textwidth]{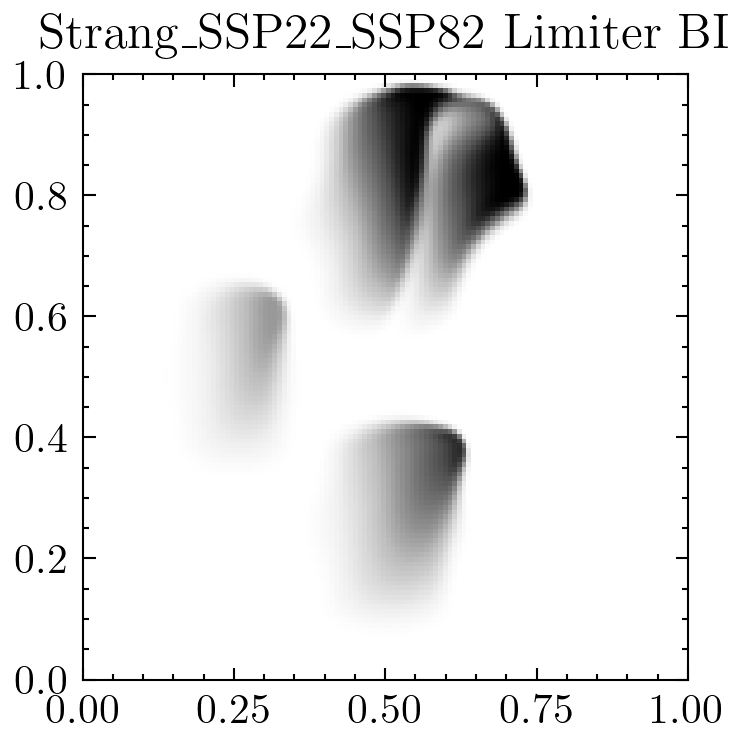}
\end{subfigure}
\begin{subfigure}[t]{0.1\textwidth}
\includegraphics[width=.95\textwidth]{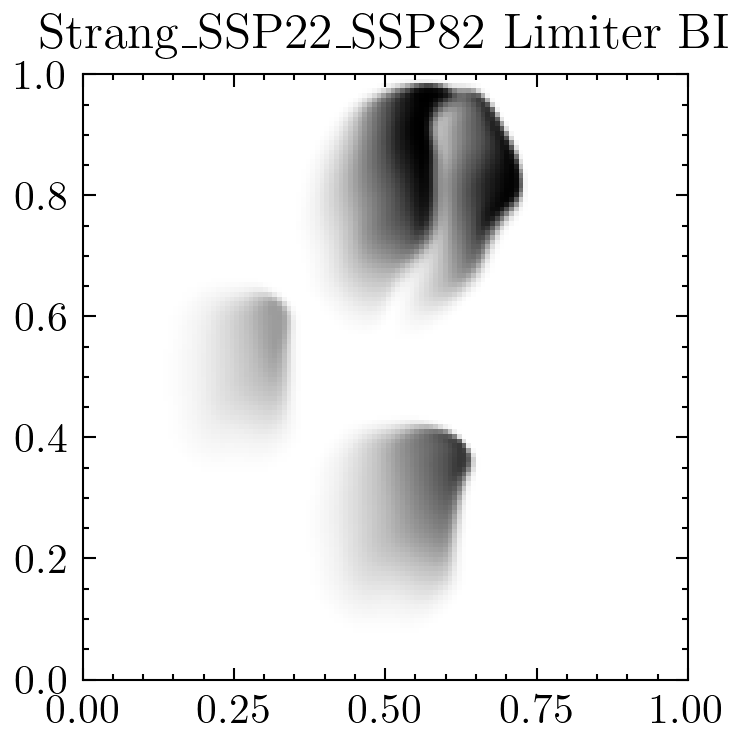}
\end{subfigure}
\begin{subfigure}[t]{0.1\textwidth}
\includegraphics[width=.95\textwidth]{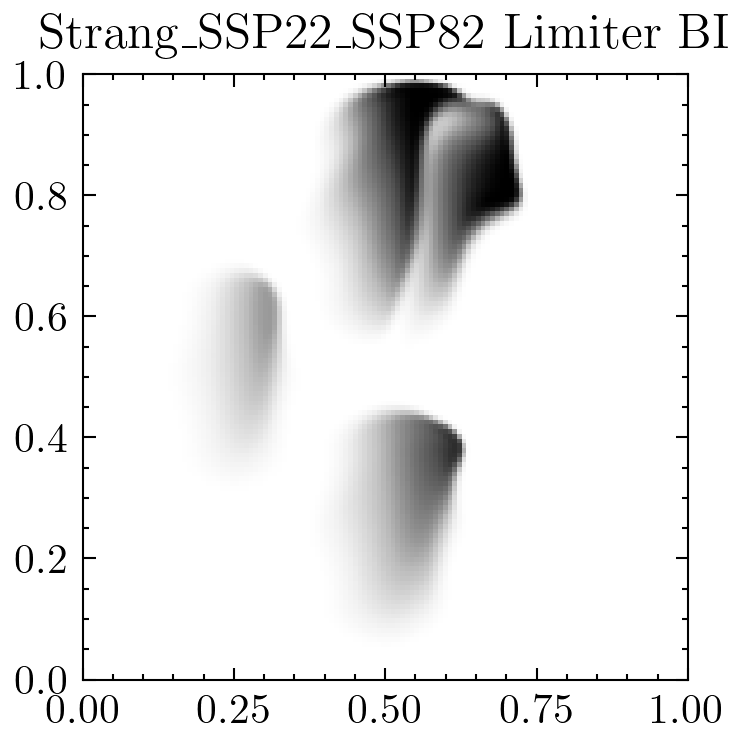}
\end{subfigure}
\begin{subfigure}[t]{0.1\textwidth}
\includegraphics[width=.95\textwidth]{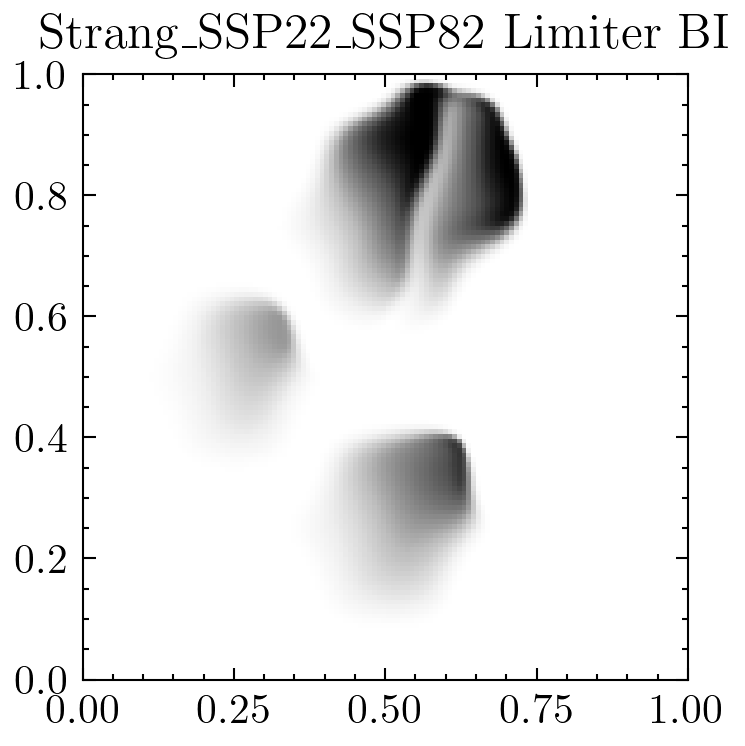}
\end{subfigure}
\begin{subfigure}[t]{0.1\textwidth}
\includegraphics[width=.95\textwidth]{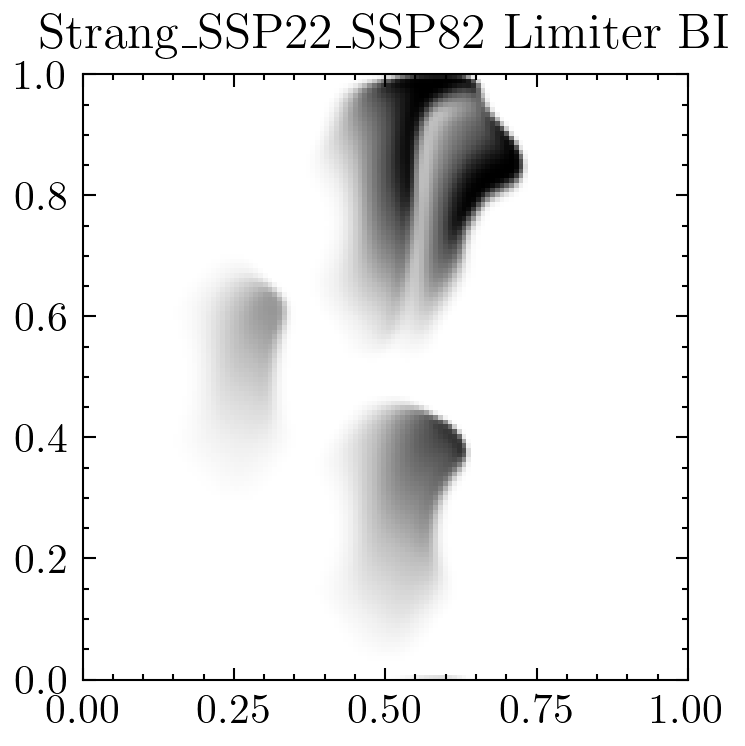}
\end{subfigure}
\begin{subfigure}[t]{0.1\textwidth}
\includegraphics[width=.95\textwidth]{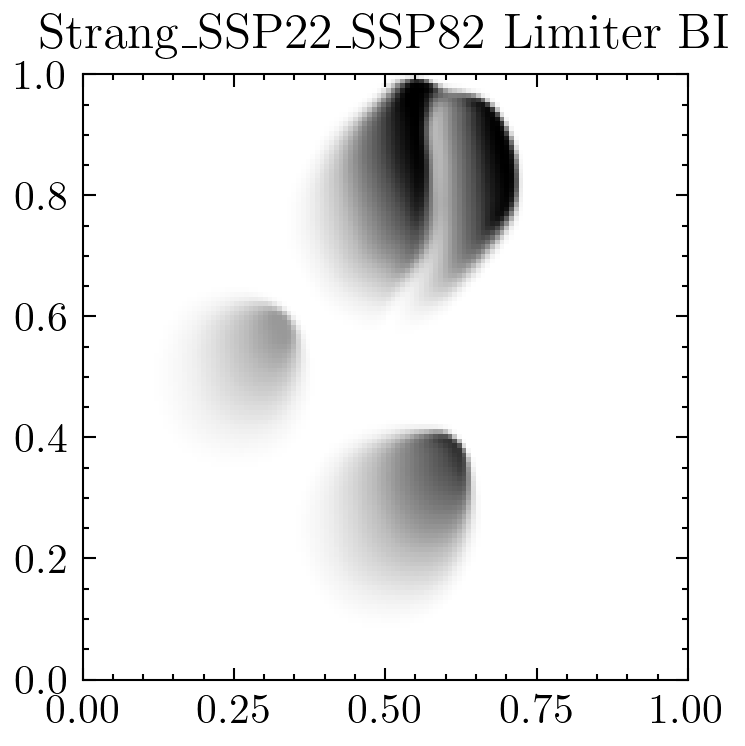}
\end{subfigure}
\begin{subfigure}[t]{0.1\textwidth}
\includegraphics[width=.95\textwidth]{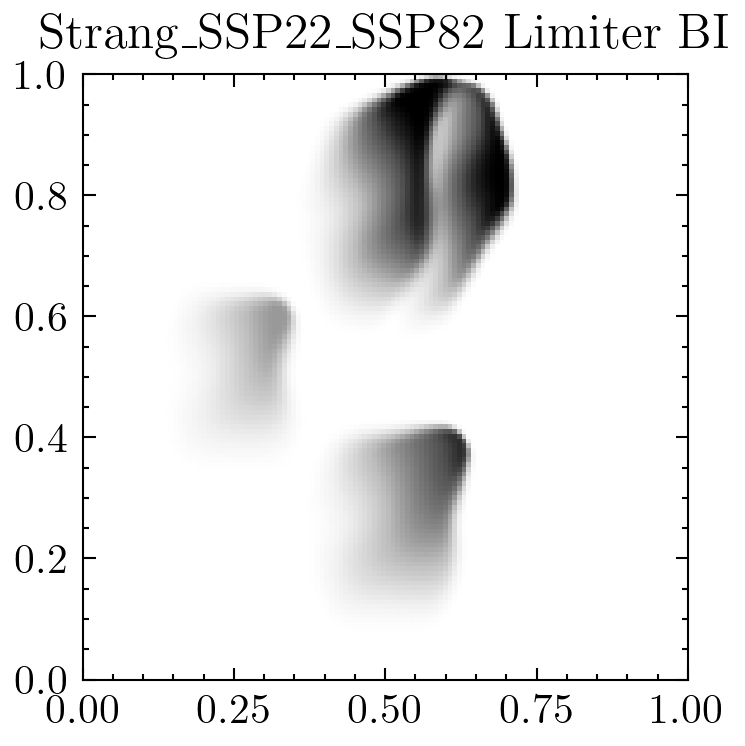}
\end{subfigure}
\begin{subfigure}[t]{0.1\textwidth}
\includegraphics[width=.95\textwidth]{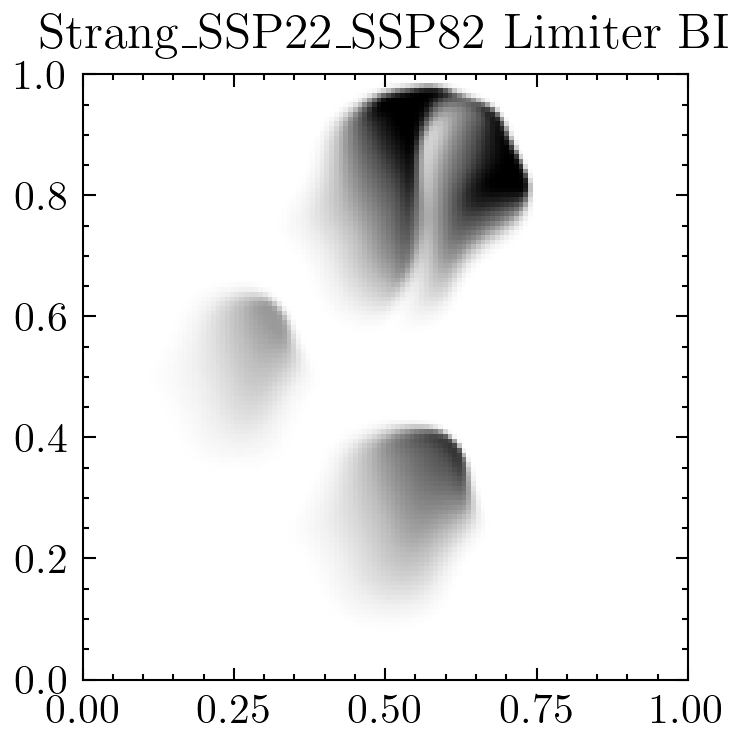}
\end{subfigure}\\
\begin{subfigure}[t]{0.1\textwidth}
\includegraphics[width=.95\textwidth]{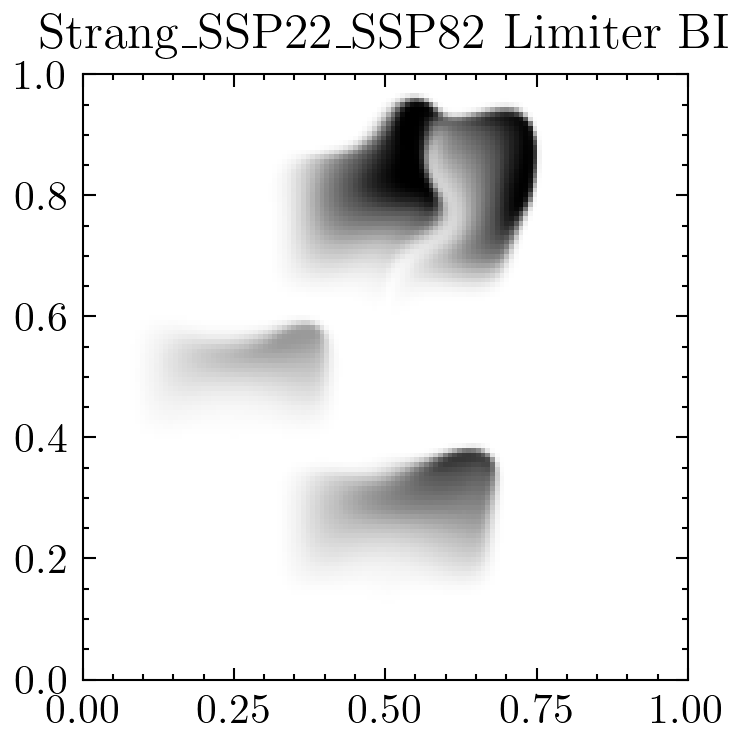}
\end{subfigure}
\begin{subfigure}[t]{0.1\textwidth}
\includegraphics[width=.95\textwidth]{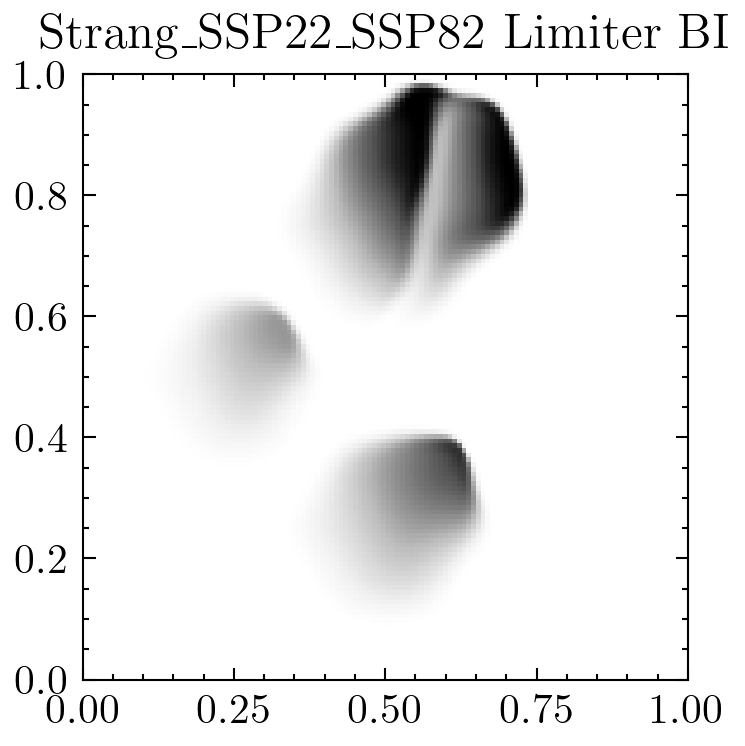}
\end{subfigure}
\begin{subfigure}[t]{0.1\textwidth}
\includegraphics[width=.95\textwidth]{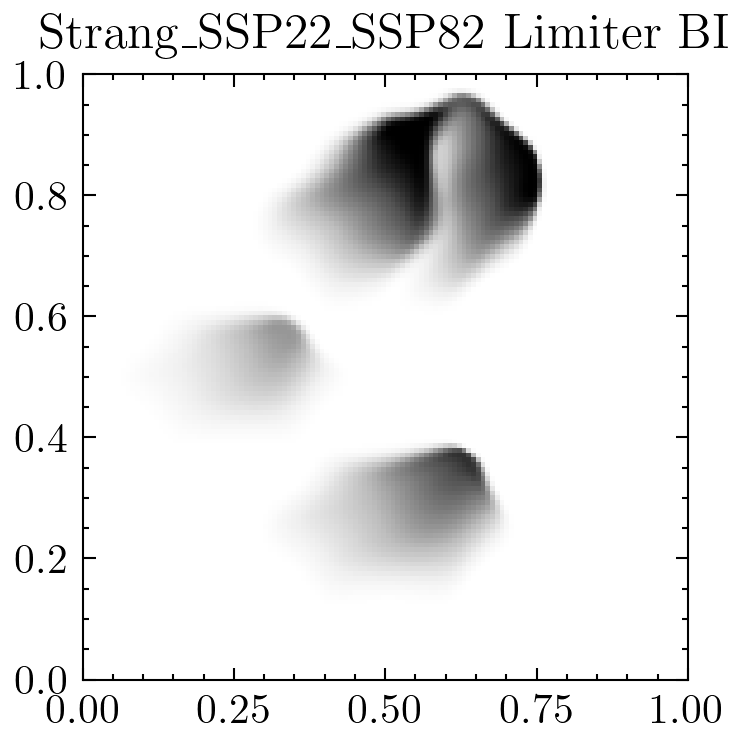}
\end{subfigure}
\begin{subfigure}[t]{0.1\textwidth}
\includegraphics[width=.95\textwidth]{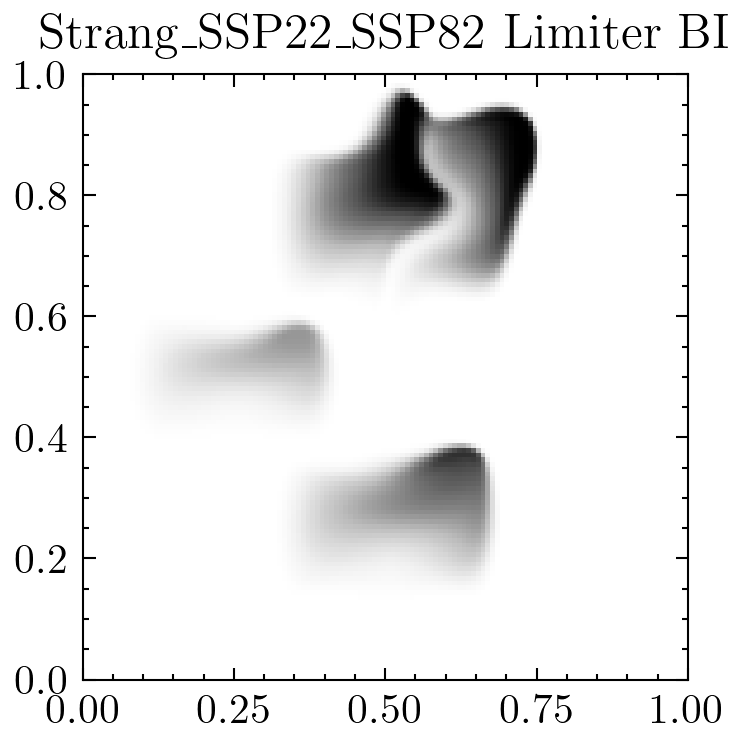}
\end{subfigure}
\begin{subfigure}[t]{0.1\textwidth}
\includegraphics[width=.95\textwidth]{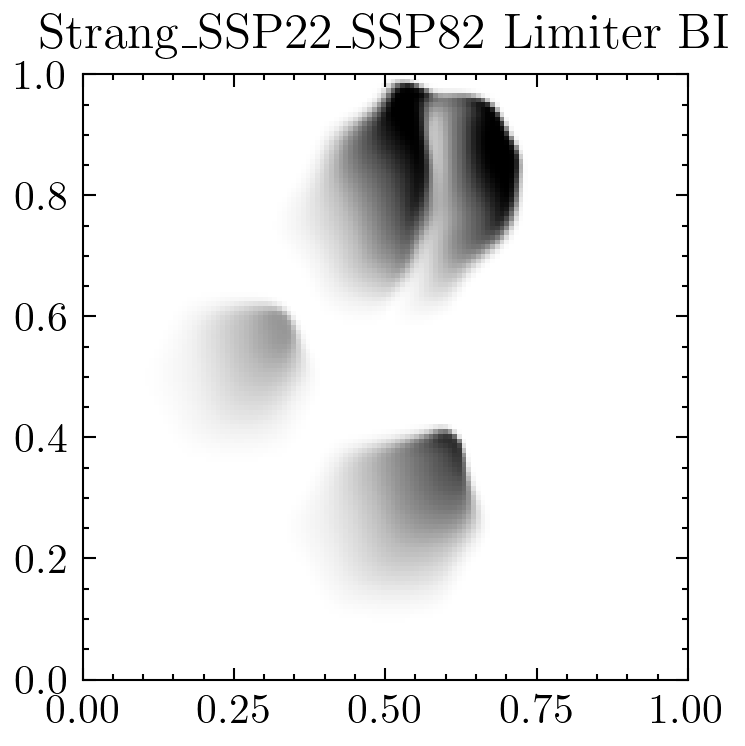}
\end{subfigure}
\begin{subfigure}[t]{0.1\textwidth}
\includegraphics[width=.95\textwidth]{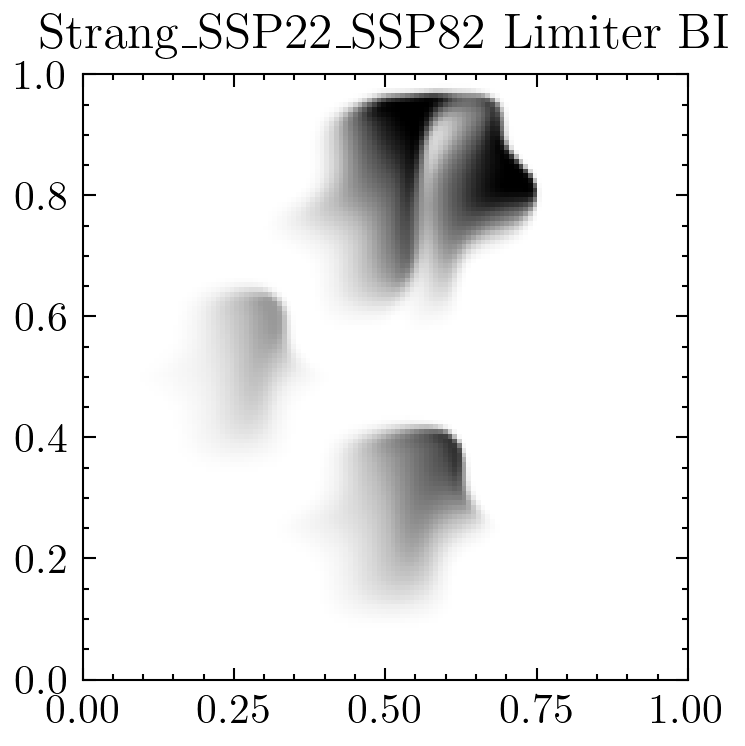}
\end{subfigure}
\begin{subfigure}[t]{0.1\textwidth}
\includegraphics[width=.95\textwidth]{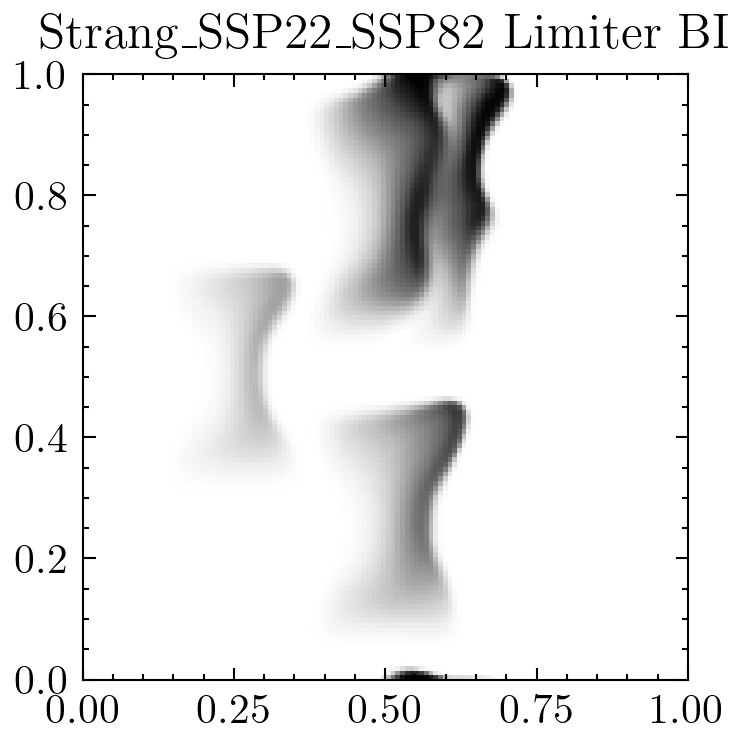}
\end{subfigure}
\begin{subfigure}[t]{0.1\textwidth}
\includegraphics[width=.95\textwidth]{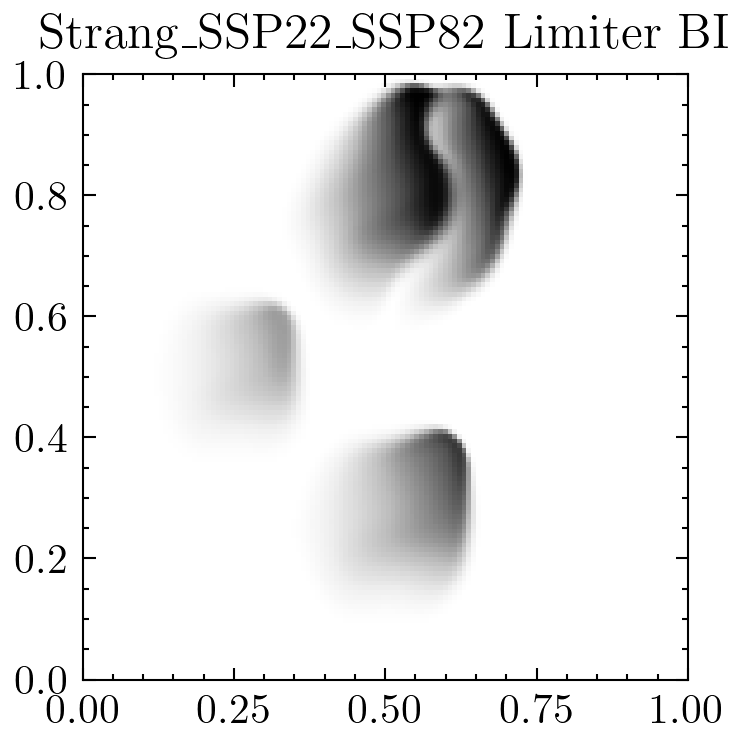}
\end{subfigure}
\begin{subfigure}[t]{0.1\textwidth}
\end{subfigure}
\caption{Burgers equation with deformational transport noise. We plot 16 ensemble member solutions of the final timestep on a perceptually uniform grey colorscale between [0,1] undershoots are in blue overshoots are in red (there are no over/undershoots).}
\label{fig:GARK_operatorsplit}
\end{figure}
The result in \cref{fig:GARK_operatorsplit} is a range-bounded solution to 2D burgers equation with stochastic transport noise. Demonstrating practical utility for \cref{method: Stochastic Generalised Additive Runge-Kutta}, indicating that one can use the ARK or GARK approach to attain a monotonic solution when the EM map may not be provably monotone.

\section{Conclusion}

This work indicates how one can apply deterministic notions of strong stability preservation in the context of stochastic equations. Firstly, monotone solving strategies are required for the solution of the Euler Maruyama map. 
Secondly, bounded increments are required for the notion of SSP to be understood in the stochastic setting. Thirdly, the deterministic proofs can be directly translated into the stochastic setting upon the appropriate identifications of EM and FE flow maps, allowing a range of SSP RK schemes to be applicable. In the first numerical demonstration in \cref{sec:example 1a burgers} we demonstrated the practical utility of all three aspects of the theory when numerically solving a 2D stochastic Burgers equation with constant transport noise. This numerical work required a stochastic extension of the Local-Lax-Friedrich numerical flux, and the use of a local maximum principle limiter in \cite{woodfield2024higher}. The second example in \cref{sec:Example 1b} demonstrated that some of the sufficient conditions for provably monotonic solutions are not always strictly necessary. 

In the Additive Runge-Kutta or Generalised Additive Runge-Kutta setting, we showed that one can inherit monotonic properties from an FE scheme and a diffusion-only EM scheme, as opposed to inheriting monotonic properties from the entire EM flow map. In \cref{sec:example:1e operator splitting GARK} we demonstrated that this could be practically used to generate bounded solutions to the 2D Burgers equation with non-constant spatially dependent transport noise. In this example, it was not obvious whether one could create a monotone EM scheme due to the lack of an appropriate Reimann solver, indicating practical utility to the strong stability-preserving stochastic extension to ARK and GARK schemes. \newline
In \cref{sec:convergence} we remark that stochastic schemes requiring bounded increments can converge weakly or strongly to the solution and be SSP. We prove mean square (strong) convergence order $1/2$ of SRK, SARK, methods when using the bounded increments in \cite{milstein2002numerical} and we remark how this proof extends to SGARK schemes.

Stochastic SSP methodology can be applied in a wide variety of contexts. Specifically motivating this work are SPDEs describing ocean and climate. In particular, the Stochastic Advection by Lie Transport framework  \cite{holm2015variational} produces SPDEs with nonlinear monotonic properties similar to their deterministic fluid PDE counterparts. In \cref{sec:Example 1c: 2D Advection,Example 1d: Incompressible Euler} it is demonstrated range boundedness and local maximum principles can be numerically attained for the advection equation and 2D incompressible Euler's equation with transport noise using monotonic solving strategies, bounded increments and an SSP timestepper.

The use of SSP methodology could be conjectured to have more broad applications to SDEs and SPDEs, where positivity preservation, contractive behaviour and other types of nonlinear stability are desirable.

\section*{Acknowledgements}
JW is especially grateful to R. Hu, for suggesting reading the work of G. N. Milstein, and M. V. Tretyakov.
Thankful to A. Lobbe, E. Fausti, and W. Pan for discussions regarding the particle filter. Discussions with D. D. Holm, L. Tianshi, H. Weller, C. Cotter, D. Crisan, O. Street, R. Wood.
The corresponding author has been supported during the present work by the European Research Council (ERC) Synergy grant ``Stochastic Transport in Upper Ocean Dynamics" (STUOD) -- DLV-856408.

%%Vancouver style references.
\bibliographystyle{abbrv}
\bibliography{refs.bib}

\appendix

% \begin{remark}[woody]
% Let the data $\lbrace Y_t \rbrace $arise from some  realisation of a SPDE
% \begin{align}
%     d Y^{\theta^*}_t = \beta_{\theta^*}(Y_t,t)dt + \Sigma_{\theta^*}(Y_t,t) dZ_t
% \end{align}
% with unknown $\theta^*$ parameter. For some realization $dZ^*$ of the signal. 
% Let the numerical method approximate
% \begin{align}
%     d X^{\theta}_t = \beta_{\theta}(X_t,t)dt + \Sigma_{\theta}(X_t,t) dZ_t
% \end{align}
% With the additive Runge-Kutta method $(A,b,c)$, $(\tilde{A},\tilde{b},\tilde{c})$, 
% \begin{align}
% X^{n+1} = \Phi(\Delta t, X^{n},\beta_{\theta},\Sigma_{\theta}, dW_t)  = \Phi^{[1]}(\Delta t, X^{n},\beta_{\theta}) + \Phi^{[2]}(\Delta t, X^{n+1},\Sigma_{\theta}, dW_t) 
% \end{align}
% We seek to minimise the following problem where one has access to the data 
% \begin{align}
%     J = \sum_{k=1}^{n}J_{k} := 
% \end{align}
% \appendix
\section{Appendix}
\subsection{Big O notation}
We understand the big-$\mathcal{O}(\Delta t)$-notation in the limit $\Delta t \rightarrow 0$, more specifically, we say $f(\Delta t) = \mathcal{O}(g(\Delta t))$ if there exists a constant $L>0$ such that $||f(\Delta t)||_2 \leq L ||g(\Delta t)||_2$ for all $\Delta t \in [0,\tau]$ sufficiently small.
\subsection{Multivariate Taylor's theorem}\label{sec:taylors-theorem}

% this is the more general case. 
% Multivariate Taylor's theorem - Let $f: \mathbf{R}^n \rightarrow \mathbf{R}$ be a $k$-times continuously differentiable function at the point $\b q \in \mathbf{R}^n$. Let $f: \mathbf{R}^n \rightarrow \mathbf{R}$ be $k+1$ times continuously differentiable in a closed compact ball around $\b q$, denoted $B(\b k,r)=\left\{\b k \in \mathbb{R}^n:||\b q -\b k || \leq r\right\}$ for some $r>0$.

% \begin{align}
% f(\b q+\b k)&=\sum_{|\alpha| \leq k} \frac{D^\alpha f(\b q)}{\alpha!}(k)^\alpha+\sum_{|\beta|=k+1} R_\beta(\boldsymbol{x})(k)^\beta \quad \text{Where} \quad R_\beta(\b q+\b k)=\frac{|\beta|}{\beta!} \int_0^1(1-\lambda)^{|\beta|-1} D^\beta f(\b q+\lambda \b k) d \lambda
% \end{align}
% continuity $(k+1)$-th  $B$, one immediately obtains the uniform estimates

% $$
% \left|R_\beta(\boldsymbol{x})\right| \leq \frac{1}{\beta!} \max _{|\alpha|=|\beta|} \max _{\boldsymbol{y} \in B}\left|D^\alpha f(\boldsymbol{y})\right|, \quad \boldsymbol{x} \in B
% $$

Multivariate Taylor's theorem - Let $f^k: \mathbb{R}^n \rightarrow \mathbb{R}$ be a $1$-times continuously differentiable function at the point $\b q \in \mathbb{R}^n$. Let $f^k: \mathbb{R}^n \rightarrow \mathbb{R}$ be $2$ times continuously differentiable in a closed compact ball around $\b q$, denoted $B_{\b q}(r)=\left\{\b y \in \mathbb{R}^n:||\b q -\b y || \leq r\right\}$ for some $r>0$. Then for $\b q+ \b a \in B_{q}(r)$, one has the following Taylors theorem
\begin{align}
f^k(\b q+\b a)&=f^k(\b q) +  D f^k|_{\b q} \b a+ R^k_f (\b q,\b a), \quad \text{where} \quad |R^k_{f}(\b q, \b a)| \leq L||\b a||_{2}^2.
\end{align}

Then in the vector-valued case, where $\b f$ is made of components $f^k$, one can write down the following component-wise Multivariate Taylor theorem of the following form, with an error estimate on the remainder 
\begin{align}
\b f(\b q+\b a) &= \b f(\b q) +  D \b f|_{\b q}\b a+ \b R_{\b f}(\b q,\b a), \quad\text{where}\quad ||\b R_{\b f}(\b q,\b a)||_2 \leq L||\b a||_2^2.
\end{align}
Similarly one can write this for the vector $\b g_p$, the $p$-th component of $G$.
\begin{align}
\b g_p(\b q+\b a) &= \b g_p(\b q) +  D \b g_{p} |_{\b q}(\b a)+ \b R_{\b g}(\b q,\b a), \quad\text{where}\quad ||\b R_{\b g}(\b q,\b a)||_2 \leq L||\b a||_2^2.
\end{align}
A matrix valued version can be written for $G$ in $\mathbb{R}^{d\times P}$, 
\begin{align}
G(\b q+\b a) &= G(\b q) +  D \b G|_{\b q}(\b a)+ \b R_{G}(\b q,\b a), \quad\text{where}\quad  ||\b R_{G}(\b q,\b a)||_2 \leq L||\b a||_2^2.
\end{align}

% \begin{remark}[Linear Stability] The Euler-Maruyama scheme is often said to be linearly stable when the solution of $q^{n+1}_{i} = q^{n}_{i} + a_{ij}q^{n}_{j}\Delta t + b_{pij}q_{j}^n \Delta W^p$ (summation over repeated index), satisfies $\lim_{n\rightarrow \infty}\mathbb{E}[(\b q^n)^2] = 0$. This can be calculated by taking the expectation through the product, and using properties of the normal distribution. To arrive at a notion of CFL condition. This notion of stability occurs in a limit, and in expectation. 
% \end{remark}

% \begin{definition}[Convergence order $p$ towards an ergodic Itô process with respect to the Ergodic criterion \cite{kloeden1992stochastic}]
% Let $\b q_n$ denote the numerical approximation of the stochastic process $\b q$ at time $t_n$, after $n$ steps of equal step-size $\Delta t$. We say that the numerical method is converging with respect to the ergodic criterion with order $p>0$ (towards an ergodic Itô process) if $\forall h \in C^{\infty}_P(\mathbb{R}^d,\mathbb{R})$, $\exists \delta>0, C_{f}>0$, s.t 
% \begin{align}
% |\frac{1}{nt}\sum_{n=0}^{nt-1} h(\b q_n)- \int_{\mathbb{R}^{d}}h(q)d \mu(q)|\leq C_f\Delta t^{p}, \quad \forall \Delta t \in (0,\delta).
% \end{align}
% \end{definition}

\subsection{Inequalities}\label{sec:inequalities}

We tabulate some additional estimates on the moments of the bounded normal increments in \cite{milstein2002numerical,milstein2004stochastic}. All odd moments are zero $\mathbb{E}[(\Delta \widetilde{Z})^{2m+1}] = 0$, $\forall m = \lbrace 1,...,M \rbrace$.  
The moment estimate in \cite{milstein2002numerical}, can be attained by computing 
\begin{align}
\mathbb{E}[(\Delta Z - \Delta \widetilde{Z})^2] = \frac{2}{\sqrt{2\pi}}\int_{A}^\infty (x-A)^2 e^{-x^2/2}dx = \frac{2}{\sqrt{2\pi}}\int_{0}^\infty y^2 e^{-y^2/2}e^{-Ay}dye^{-A^2/2},
\end{align}
Then since $y^2 e^{-y^2/2},e^{-Ay}\geq 0$, one can take the absolute value into the integral and apply Hölder's inequality with the $L^\infty,L^1$ norms on $(0,\infty)$. This allows the following upper-bound
\begin{align}
\mathbb{E}[(\Delta Z - \Delta \widetilde{Z})^2] = e^{-A^2/2}\frac{2}{\sqrt{2\pi}}\int_{0}^\infty y^2 e^{-y^2/2}e^{-Ay}dy\leq e^{-A^2/2}\mathbb{E}[\Delta Z^2]||e^{-Ay}||_{L^{\infty}(0,\infty)}= e^{-A^2/2}.
\end{align}

Similarly, we have that higher-order moments are similarly bounded,
\begin{align}
\mathbb{E}[(\Delta Z - \Delta \widetilde{Z})^{2m}] = \frac{2}{\sqrt{2\pi}}\int_{A}^\infty (x-A)^{2m} e^{-x^2/2}dx = \frac{2}{\sqrt{2\pi}}\int_{0}^\infty y^{2m} e^{-y^2/2}e^{-Ay}dye^{-A^2/2} \leq \mathbb{E}[\Delta Z^{2m}]e^{-A^2/2}.
\end{align}
Taking $A = \sqrt{2k|\ln(\Delta t)|}$, as in \cref{eq:bounded increments}, gives
odd moments $\mathbb{E}[(\Delta Z - \Delta \widetilde{Z})^{2m+1}] = 0$ and even moments $\mathbb{E}[(\Delta Z - \Delta \widetilde{Z})^{2m}] \leq \frac{(2m)!\Delta t^{k}}{2^m m!}$. Using the Law of Total Expectation, to condition the expectation over ($|x|>A$, $x<A$, $x<-A$), one can compute
\begin{align}
    \mathbb{E}[(\Delta Z^2 - \Delta \widetilde{Z}^2)(\Delta Z - \Delta \widetilde{Z})] &= \frac{1}{\sqrt{2\pi}} 
    \int_{a}^{\infty}(x^2-a^2)(x-a)e^{-x^2/2}dx +\frac{1}{\sqrt{2\pi}} 
    \int_{-\infty}^{-a}(x^2-a^2)(x+a)e^{-x^2/2}dx \\
    &= 0.
\end{align}

\begin{method}[Burgers Godunov solver]\label{method:godunov-method}
If one has a 1d edge discontinuity, where $q_l,q_r$ denote the values to the left and right of the discontinuity.  Burgers equation has a convex flux function $f(q) = q^2/2$. The solutions are shock solutions if $q_l>q_r$, or a rarefaction fan solution if $q_l<q_r$. This allows one to write down the numerical flux at each cell boundary as 
\begin{align}
F_{i+1/2} = 
\begin{cases} 
F_{shock}, \quad (q_l > q_r),\\
F_{rarefraction}, \quad (q_l < q_r).
\end{cases}
\end{align}
Where the shock and rarefaction solutions are respectively given by  
\begin{align}
F_{shock} =
\left\lbrace
\begin{aligned}
 f(q_l),\quad & s>0\\
 f(q_r),\quad & s<0
\end{aligned}
\right\rbrace
,
\quad F_{rarefaction} = 
\left\lbrace
\begin{aligned}
f(q_l), \quad &(q_l>0)\\
0,   \quad &(q_l<0)\\
f(q_r), \quad &(q_r<0)\\
0,   \quad &(q_r>0)
\end{aligned}
\right\rbrace,
\end{align}
here the shock speed $s$ is specified by the Rankine-Hugoniot condition
$$ s= \frac{1/2 q_l^2 - 1/2 q_r^2}{q_l - q_r} = \frac{q_l+q_r}{2}.$$
\end{method}

\end{document}